\documentclass[11pt]{article}

\textheight 8.5truein
\parskip 0.1in
\topmargin 0.25in
\headheight 0in
\headsep 0in
\textwidth 6.5truein
\oddsidemargin  0in
\evensidemargin 0in

\parindent0pt

\usepackage{amsmath,amssymb,amsthm,color}
\usepackage{algorithm,algorithmicx,algpseudocode}
\usepackage{subcaption}
\usepackage[hidelinks,colorlinks=true,linkcolor=blue,citecolor=black]{hyperref}
\usepackage{fancybox,graphicx,bm,float,soul}               
\usepackage{lscape}


\newtheorem{definition}{Definition}
\newtheorem{assumption}{Assumption}

\newtheorem{theorem}{Theorem}
\newtheorem{lemma}[theorem]{Lemma}

\newtheorem{proposition}[theorem]{Proposition}

\newtheorem{remark}{Remark}[section]

\newcommand{\R}{\mathbb{R}}

\newcommand{\cO}{\mathcal{O}}
\newcommand{\cM}{\mathcal{M}}
\newcommand{\cA}{\mathcal{A}}
\newcommand{\cE}{\mathcal{E}}

\newcommand{\cT}{\mathcal{T}}
\newcommand{\cC}{\mathcal{C}}

\newcommand{\bpmat}{\begin{pmatrix}}
\newcommand{\epmat}{\end{pmatrix}}
\newcommand{\Symn}{\mathcal{S}^{n \times n}}

\newcommand{\Tr}{\text{Tr}}
\newcommand{\bx}{\bm{x}}
\newcommand{\bxkp}{\bm{x}^{k+1} }
\newcommand{\bxk}{\bm{x}^{k} }
\newcommand{\bykp}{\bm{y}^{k+1}}
\newcommand{\byk}{\bm{y}^{k}}

\newcommand{\bb}{\bm{b}}

\newcommand{\bq}{\bm{q}}
\newcommand{\by}{\bm{y}}
\newcommand{\bZ}{\bm{Z}}
\newcommand{\bc}{\bm{c}}

\newcommand{\bd}{\bm{d}}
\newcommand{\be}{\bm{e}}
\newcommand{\bu}{\bm{u}}

\newcommand{\bz}{\bm{z}}
\newcommand{\bX}{\bm{X}}
\newcommand{\bY}{\bm{Y}}

\newcommand{\bQ}{\bm{Q}}

\newcommand{\bA}{\bm{A}}
\newcommand{\bU}{\bm{U}}

\newcommand{\bC}{\bm{C}}

\newcommand{\bV}{\bm{V}}
\newcommand{\barf}{\bar{f}}
\newcommand{\barc}{\bar{c}}
\newcommand{\blambda}{\bm{\lambda}}

\newcommand{\bnabla}{\bm{\nabla}}

\newcommand{\bSigma}{\bm\Sigma}

\newcommand{\Retr}{\text{\rm Retr}}

\newcommand{\Diag}{\operatorname{Diag}}

\newcommand{\grad}{\operatorname{grad}}
\DeclareMathOperator*{\argmin}{argmin}

\newcommand{\Rmnum}[1]{\uppercase\expandafter{\romannumeral #1}} 
\usepackage[raggedright]{titlesec}
\titleformat{\chapter}{\centering\Huge\bfseries}{Chapter \Rmnum{\thechapter} }{1em}{} 

\usepackage{mathrsfs} 
\usepackage{enumerate} 

\usepackage{multirow}
\usepackage{stmaryrd}

\title{ 
General Constrained Matrix Optimization
}

\author{ Casey Garner\thanks{School of Mathematics, University of Minnesota (\href{mailto:garne214@umn.edu}{garne214@umn.edu}, \href{mailto:lerman@umn.edu}{lerman@umn.edu}) }
\hspace{1cm}
Gilad Lerman\footnotemark[1]
\hspace{1cm}
Shuzhong Zhang\thanks{Department of Industrial and Systems Engineering (\href{mailto:zhangs@umn.edu}{zhangs@umn.edu})}}

\date{\today}

\begin{document}
\maketitle

\vspace{-0.2in}
\begin{abstract}
{
This paper presents and analyzes the first matrix optimization model which allows general coordinate and spectral constraints. 
The breadth of problems our model covers is exemplified by a lengthy list of examples from the literature, including semidefinite programming, matrix completion, and quadratically constrained quadratic programs (QCQPs), and we demonstrate our model enables completely novel formulations of numerous problems.
Our solution methodology leverages matrix factorization and constrained manifold optimization to develop an equivalent reformulation of our general matrix optimization model for which we design a feasible, first-order algorithm. 
We prove our algorithm converges to $(\epsilon,\epsilon)$-approximate first-order KKT points of our reformulation in $\mathcal{O}(1/\epsilon^2)$ iterations. 
The method we developed applies to a special class of constrained manifold optimization problems and is one of the first which generates a sequence of feasible points which converges to a KKT point.
We validate our model and method through numerical experimentation. 
Our first experiment presents a generalized version of semidefinite programming which allows novel eigenvalue constraints, and our second numerical experiment compares our method to the classical semidefinite relaxation approach for solving QCQPs. 
For the QCQP numerical experiments, we demonstrate our method is able to dominate the classical state-of-the-art approach, solving more than ten times as many problems compared to the standard solution procedure.

\vspace{3mm}
    \noindent\textbf{Keywords:} spectrally constrained optimization $\cdot$ constrained manifold optimization $\cdot$ quadratically constrained quadratic programs $\cdot$  semidefinite programming $\cdot$ matrix completion $\cdot$ nonconvex optimization $\cdot$ iteration complexity  

\vspace{3mm}
    \noindent\textbf{MSC codes:} 
    90C26, 90C52, 65K10, 68W40 
}
\end{abstract}

\section{Introduction}
The ubiquity of matrix optimization is unquestionable and the utility of the models is even more so. 
From matrix completion \cite{mat_completion,matrix_completion_with_noise}, various forms of principal component analysis \cite{jiang2015tensor,zou2006sparse}, max-cut \cite{goemans1995improved}, robust subpsace recovery \cite{robust_subspace_recovery_lerman,rsr}, covariance estimation \cite{fan2016overview}, semidefinite programming \cite{vandenberghe1996semidefinite}, eigenvalue optimization \cite{lewis2003mathematics,lewis1996eigenvalue}, Markov processes \cite{boyd2004fastest}, etc., we see the applications of minimizing and maximizing functions whose arguments are matrices subject to various forms of constraints. 
These constraints come in two varieties: coordinate constraints and spectral constraints. 
The coordinate constraints are the most readily visible and most prominently leveraged. 
The spectral constraints which require the eigenvalues and/or singular values to satisfy some set of conditions are present but to a lesser degree. 
One of the most common forms of spectral constraints is given by enforcing a matrix to be positive definite or semidefinite; however, this is a relatively tame condition when we consider the vast possibilities of restricting the spectrum.

The study of how to solve matrix optimization problems with general spectral constraints is still in its infancy.
Our prior work \cite{garnerspec} was the first model allowing general linear inequality constraints on the spectrum of a symmetric matrix. 
Shortly following our first paper, Ito and Louren{\c{c}}o came out with the second paper on this problem. In \cite{ito2023eigenvalue}, they utilized the work of a relatively unknown paper by Gowda \cite{gowda2019optimizing} to enable solving matrix optimization problems with general convex constraints on the spectrum; however, neither of these works addressed how to solve matrix optimization problems with both general coordinate and spectral constraints. 

This paper presents the first model and algorithm which enables general constraints on both the coordinates and spectrum of a matrix. 
Namely, in these pages we develop and analysis a method  
to find approximate solutions to models of the form: 
\begin{align}\label{eqn:gen_spec_coord}
\min&\; F(\bX) \\ 
\text{s.t.}&\; \bm{G}(\bX) = \bm{0}, \nonumber \\
          \;& \bm{g}(\bm{\sigma}(\bX)) \leq \bm{0}, \nonumber \\
          \;& \bX \in \mathbb{S}, \nonumber 
\end{align}
where $\mathbb{S}$ can be any one of many sets including: $\R^{m \times n}$, $\Symn$, and $\R^{n_1 \times \cdots \times n_k}$, i.e., real $m \times n$ matrices, real $n \times n$ symmetric matrices, and real tensors, and $\bm{\sigma}:\mathbb{S} \rightarrow \R^p$ maps the matrix or tensor $\bX$ to its ordered vector of singular values, eigenvalues, or generalized singular values. 
So, the method we develop extends beyond even matrices to tensors, enabling a novel modeling power not yet leveraged in the literature.

\begin{remark}
It is worth noting the only reason we choose to present \eqref{eqn:gen_spec_coord} with equality constraints only on the coordinates, i.e., $\bm{G}(\bX) = \bm{0}$, and not an additionally set of inequality constraints, e.g., $\bm{H}(\bX) \leq \bm{0}$, was for ease of presentation through a reduction of notation and terms. 
The analysis to come admits additionally inequality constraints with minor alterations to the assumptions, 
and our numerical experiments on quadratically constraints quadratic programs contain inequality constraints, further supporting this. 
\end{remark}

\section{Motivating Examples}
The generality of \eqref{eqn:gen_spec_coord} is vast, including multiple important applications and problem classes. 
To showcase this fact, let us investigate some examples and see how \eqref{eqn:gen_spec_coord} enables new constraints on the spectrum which before had been impossible, while also extending essential models in the literature.  

\subsection{Generalized Semidefinite Programs}
The impact of semidefinite programming (SDP) is nearly impossible to overstate, though it took some time before the salience of the model was truly realized and established. 
Today, we have seen applications of SDP extend from control theory to economics and most places in-between \cite{wolkowicz2012handbook,vandenberghe1996semidefinite}.
The standard form of a semidefinite program can be expressed as
\begin{align}\label{eq:SDP}
\min&\; \langle \bm{C}_0, \bX \rangle \\
\text{s.t.}&\; \langle \bm{C}_i, \bX \rangle = b_i , \; i=1,\hdots, p, \nonumber \\
           &\; \bX \succeq \bm{0},\nonumber 
\end{align}
where $\bm{C}_i \in \Symn$ for $i=0,1,\hdots,p$. 
By appropriately defining the functions in \eqref{eqn:gen_spec_coord} and letting $\mathbb{S} = \Symn$, we see the standard SDP model is a special instance of our general matrix model, making \eqref{eqn:gen_spec_coord} a generalization of semidefinite programming. 
We can extend SDPs and define the following {\it generalized SDP} model:
\begin{align}\label{eq:gen_SDP}
\min&\; \langle \bm{C}_0, \bX \rangle \\
\text{s.t.}&\; \langle \bm{C}_i, \bX \rangle = b_i , \; i=1,\hdots, p, \nonumber \\
           \;& \bm{A} \blambda(\bX) \leq \bb, \nonumber \\
          \;& \bX \in \Symn, \nonumber 
\end{align}
where $\blambda(\bX) := (\lambda_1(\bX), \hdots, \lambda_n(\bX))^\top$ is the vector of eigenvalues of $\bX$ listed in descending order, i.e., $\lambda_i(\bX) \geq \lambda_{i+1}(\bX)$ for $i=1,\hdots, n-1$.
So, letting $\bb = \bm{0}$ and $\bA = - \bm{I}$, we see \eqref{eq:gen_SDP} becomes \eqref{eq:SDP}. 
Thus, our general matrix model enables entirely new classes of constraints on the eigenvalues not yet attempted, while extending an essential problem class.  
{\color{black} Clearly, \eqref{eq:gen_SDP} is no longer a convex formulation. 
For example, a constraint as simple as $\lambda_1(\bX) \ge c_1$ and $\lambda_{n-1}(\bX) \le c_{n-1}$ with $0 < c_{n-1} < c_1$ would be nonconvex. 
In the case when $\bX$ is a Laplacian matrix, the condition $\lambda_{n-1}(\bX) \le c_{n-1}$ relates to limiting the connectivity of a graph which might well be desirable in certain design circumstances. Furthermore, the eigenvectors associated with the second smallest eigenvalue are known as Fiedler vectors and they have important applications to combinatorial optimization problems; see \cite{de2007old} and the references therein. 
}

\subsection{Matrix Preconditioning}
The subject of matrix preconditioning often arises in practical applications \cite{chen_2005,wathen_2015}. 
The topic is broached from the desire to find a relatively simple invertible matrix to improve the stability and solvability of various numerical procedures, e.g., solving a system of linear equations or minimizing a quadratic form. 
One of many preconditioning models is to locate good feasible solutions to
\begin{align}\label{eq:precond}
\min&\; \frac{1}{2}\| \bX \bA \bX^\top - \bm{I}\|_F^2  \\
\text{s.t.} \;& \bX \in \mathcal{C}, \nonumber 
\end{align}
where $\mathcal{C}$ enforces some structure upon the matrix $\bX$. 
Assuming $\bA \in \Symn$, the pre- and post-multiplication by $\bX$ ensures symmetry is maintained.
This model relates directly to \eqref{eqn:gen_spec_coord} through the constraint set $\cC$. 
For example, it would be reasonable to desire $\bX$ to a be a well-conditioned block diagonal matrix. 
Writing this explicitly, we have the model 
\begin{align}\label{eq:gen_precond}
\min&\; \frac{1}{2}\| \bX \bA \bX^\top - \bm{I}\|_F^2  \\
\text{s.t.} \;& \bX  = \begin{pmatrix} \bX_1 & & \bm{0} \\ & \ddots & \\ \bm{0} & & \bX_p \end{pmatrix} \in \Symn, \nonumber \\
  \;& \lambda_1(\bX_i) - \kappa \lambda_n(\bX_i) \leq 0, \nonumber \\
  \;& \lambda_n(\bX_i) \ge \delta, \nonumber \\
  \;& \bX_i \in \mathcal{S}^{s \times s}, i=1,\hdots, p, \nonumber 
\end{align}
where $\delta$ and $\kappa$ are nonnegative and $sp = n$. 
Note, the eigenvalue constraint ensures the condition number of $\bX$ is bounded above by $\kappa$ and each matrix forming $\bX$ is well-conditioned. 
This single example demonstrates the versatility of \eqref{eqn:gen_spec_coord} for modeling optimization models related to matrix preconditioning. 

\subsection{Inverse Eigenvalue and Singular Value Problems and Constrained PCA}
A large body of literature exists on what are known as inverse eigenvalue and singular value problems \cite{chu2005inverse,chu1998inverse}.
The general concern is to form a matrix from prescribed spectral information. 
For example, the entire spectrum of the matrix may be provided or some subset of it. 
Then, the goal is to construct a matrix that has this structure, possibly subject to additional structural constraints. 
The prevalence of problems such as this are hard to overstate as applications of them include: seismic tomography, remote sensing, geophysics, circuit theory, and more; see the aforementioned works by Chu for further discussion. 
These problems naturally relate to \eqref{eqn:gen_spec_coord} as it models both coordinate and spectral constraints. 
For example, a simple instance of an inverse singular value problem is the following projection problem, 
\begin{align}\label{eq:projection_inv_eig}
\min&\; \frac{1}{2}\| \bX  - \bA\|_F^2  \\
\text{s.t.} \;& \bX \in \mathcal{C}_1, \nonumber \\
 \;& \bm{\sigma}(\bX) \in \mathcal{C}_2, \nonumber
\end{align}
where $\bA \in \R^{m \times n}$, $\bm{\sigma}(\bX) := (\sigma_1(\bX), \hdots, \sigma_{\min(m,n)}(\bX))$ is the vector of singular values of $\bX$ in descending order, and $\cC_1$ and $\cC_2$ correspond to coordinate and spectral constraints respectively.
So, \eqref{eq:projection_inv_eig} seeks to project $\bA$ onto general matrix constraints. 
Hence, if $\bA$ satisfies the constraints it is a solution to the inverse singular problem described by $\cC_1$ and $\cC_2$, otherwise the model seeks to find the closet matrix to $\bA$ which solves the inverse problem. 

The relevance of this problem is further demonstrated by the fact \eqref{eq:projection_inv_eig} becomes the principal component analysis (PCA) model by the appropriate selection of the constraints. 
Letting $\cC_1 = \R^{m \times n}$ and $C_2 = \{ \bm{z} \in \R^{\min(m,n)}_\geq \; | \; z_1 >0, z_2 = \cdots = z_{\min(m,n)} = 0 \}$, the model becomes 
\begin{align}
\min&\; \frac{1}{2}\| \bX  - \bA\|_F^2  \nonumber \\
\text{s.t.} \;& \text{rank}(\bX) = 1, \; \bX \in \R^{m \times n}, \nonumber 
\end{align}
whose solution is the first principal component of $\bA$. However, our model extends beyond this to enable constrained versions of PCA. For example, we have the following nonnegative PCA model,
\begin{align}
\min&\; \frac{1}{2}\| \bX  - \bA\|_F^2  \nonumber  \\
\text{s.t.} \;& \text{rank}(\bX) = 1, \nonumber \\
            \;& \bX \geq \bm{0},\;  \bX \in \R^{m \times n}, \nonumber 
\end{align}
where $\bX \geq \bm{0}$ enforces $X_{ij} \geq 0$ for all $i$ and $j$. 
We can also move beyond simple rank constraints to enforce far more general constraints on the singular values. 
For instance, we could constrain the top $k$-singular values to account for between $p_1$ and $p_2$ percent of the total information of the matrix, that is, 
\begin{align}\label{eq:pca_II}
\min&\; \frac{1}{2}\| \bX  - \bA\|_F^2  \\
\text{s.t.} \;\;\;& p_1 \leq \frac{\sigma_1(\bX) + \cdots + \sigma_k(\bX) }{ 
 \sigma_1(\bX) + \cdots + \sigma_{\min(m,n)}(\bX) } \leq p_2, \nonumber \\
            \;& \bX \geq \bm{0},\;  \bX \in \R^{m \times n}, \nonumber 
\end{align}
where we note \eqref{eq:pca_II} is a nonconvex model due to the spectral constraint.
Thus, \eqref{eqn:gen_spec_coord} enables novel extensions of PCA which have not yet been investigated and could be of much interest for future investigations. 

\subsection{Quadratically Constrained Quadratic Programs}\label{sec:QCQP_intro}
A problem of immense importance in signal processing and communications is that of solving quadratically constrained quadratic programs (QCQP), which can be formulated as,
\begin{align}\label{eq:QCQP}
\min&\; \bx^\top \bm{C} \bx \\
\text{s.t.}&\; \bx^\top \bm{F}_i \bx \geq g_i, \; i=1,\hdots, p, \nonumber \\
           &\; \bx^\top \bm{H}_i \bx = l_i, \; i=1,\hdots, q, \nonumber \\
           &\; \bx \in \R^n, \nonumber 
\end{align}
where $\bm{C}, \bm{F}_1, \hdots, \bm{F}_p, \bm{H}_1, \hdots, \bm{H}_q$ are general real symmetric matrices. To highlight one example, let us consider the Boolean quadratic program (BQP),
\begin{align}\label{eq:BQP}
\min&\; \bx^\top \bm{C} \bx \\
\text{s.t.}&\; x_i^2 = 1, \; i=1,\hdots, n, \nonumber \\
           &\; \bx \in \R^n. \nonumber 
\end{align}
The BQP is known to be NP-Hard problem and has applications in multiple-input-multiple-output
(MIMO) detection and multiuser detection.
And, a well-known important instance of BQP is the Max-Cut problem, which seeks to locate a maximum cut of a graph, i.e., a partition of the vertices of a graph into two sets $S_1$ and $S_2$ such that the number of edges connecting vertices in $S_1$ to vertices in $S_2$ is maximized.   

A powerful approach taken to try and solve \eqref{eq:QCQP} is to first reformulate it is as the equivalent matrix optimization model,
\begin{align}\label{eq:matQCQP}
\min&\; \langle \bm{C}, \bX \rangle \\
\text{s.t.}&\; \langle \bm{F}_i, \bX \rangle \geq g_i , \; i=1,\hdots, p, \nonumber \\
           &\; \langle \bm{H}_i, \bX \rangle = l_i, \; i=1,\hdots, q, \nonumber \\
           &\; \bX \succeq \bm{0}, \; \text{rank}(\bX)= 1,\nonumber 
\end{align}
then form and solve a semidefinite program relaxation (SDR) of \eqref{eq:matQCQP} by dropping the rank-1 condition, and then finally applying randomization techniques to find feasible solutions to \eqref{eq:QCQP}. 
This approach of convexification plus randomization for QCQPs has received much attention in recent years \cite{luo2010semidefinite}, and numerous results have been published proving the accuracy of SDR for certain QCQPs.
For example, the influential 0.8756 approximation result for the Max-Cut problem  demonstrates the solution to the SDR can sometimes be good \cite{goemans1995improved}; 
however, dropping the rank-1 condition in \eqref{eq:matQCQP} is not the only way to relax the problem. 
Our new general matrix optimization model admits several alternatives which we demonstrate can significantly outperform the state-of-the-art SDR approach.  
For example, we can approximate the rank-1 condition with different eigenvalue constraints, for example,
\begin{equation}\label{eq:rank_1_relax_1}
\{\bX \in \Symn \;| \; \lambda_1(\bX) \geq \delta, \; \lambda_i(\bX) \in [0,\delta],\; i=2,\hdots, n\},
\end{equation}
\begin{equation}\label{eq:rank_1_relax_2}
\{\bX \in \Symn \;| \; \lambda_i(\bX) \geq \alpha_i \lambda_{i+1}(\bX),\; i=1,\hdots, n-1\},
\end{equation}
where $\delta \geq 0$ and $\alpha_i \geq 1$, for $i=1,\hdots, n-1$.
In \cite{garnerspec}, we saw relaxations of this form applied to solving systems of quadratic equations was able to best the classical SDR approach. 
Later in Section~\ref{sec:QCQP_exp}, we demonstrate through a large number of numerical experiments that our general matrix optimization framework using \eqref{eq:rank_1_relax_1} to relax the rank constraints dominates the SDR approach to solving a set of QCQPs.

\subsection{Structured Linear Regression}
Across the scientific disciplines few mathematical paradigms are utilized more than linear regression.
The novel framework we are proposing can also be applied to perform novel structured linear regression between vector spaces. 
For example, let $\{(\bx^i,\bb^i)\}_{i=1}^{N} \subset \R^n \times \R^m$ be a set of observed data points. 
Our goal is to locate a constrained linear transform $\mathcal{L}:\R^n \rightarrow \R^m$ which solves or nearly solves $\mathcal{L}(\bx^i) = \bb^i$ for all $i$. 
Since all such linear transforms are representable by matrices $\bA \in \R^{m \times n}$, this leads to the following general matrix optimization model, 
\begin{align}\label{eq:linear_regression}
\min&\;\; \frac{1}{2} \sum_{i=1}^{N} \| \bA \bx^i - \bb^i\|^2 \\
\text{s.t.}&\; \bA \in \mathcal{C}_1, \nonumber \\  
           &\; \lambda(\bA) \in \mathcal{C}_2, \nonumber 
\end{align}
where $\cC_1$ and $\cC_2$ represent the coordinate and spectral constraints on $\bA \in \R^{m \times n}$ respectively.
For example, say we know the process generating our data set $\{(\bx^i,\bb^i)\}_{i=1}^{N}$ is given by a symmetric tri-diagonal matrix with a particular spectral structure, e.g., $\text{rank}(\bA) \leq r$, then we can leverage this information to form the structured linear regression model 
\begin{align}\label{eq:linear_regression_example_1}
\min&\;\; \frac{1}{2} \sum_{i=1}^{N} \| \bA \bx^i - \bb^i\|^2 \\
\text{s.t.}&\; \bA \in \mathcal{T}_n, \nonumber \\
           &\;  \text{rank}(\bA) \leq r, \nonumber 
\end{align}
where $\mathcal{T}_n := \{ \bX \in \Symn \; | \; X_{ij} = 0 \text{ if } j \leq i-1 \text { or } j \geq i+1\}$.
This framework directly relates to the work in robust subspace recovery \cite{robust_subspace_recovery_lerman,lerman2015robust} where our framework enables a much greater freedom to leverage spectral information which might be present to a practitioner.  

%
%

\subsection{Matrix Completion}
The task of filling-in the missing entries of a matrix, also known as, matrix completion, has found immense success in both image recovery \cite{zhang2012matrix} and recommender systems \cite{chen2022review}, among other applications \cite{fazel2002matrix}. 
Formally, the matrix completion problem can be formulated as: {\it given a partially known matrix $\bm{M} \in \R^{m\times n}$ such that $M_{ij}$ for all $(i,j) \in \Omega$ are known, recover the unknown entries in $\bm{M}$}. A classic formulation of this problem is the rank minimization problem, 
\begin{align}\label{eq:rank_min}
\min&\;\; \text{rank}(\bX) \\
\text{s.t.}&\; X_{i,j} = M_{i,j}, \; (i,j) \in \Omega, \nonumber \\
&\; \bX \in \R^{m \times n}. \nonumber 
\end{align}
Due to the fact \eqref{eq:rank_min} is an extremely challenging problem, it is common to replace the rank function with a convex approximation of it to obtain, 
\begin{align}\label{eq:rank_min_cnvx}
\min&\;\; \|\bX\|_* \\
\text{s.t.}&\; X_{i,j} = M_{i,j}, \; (i,j) \in \Omega, \nonumber \\
&\; \bX \in \R^{m \times n}, \nonumber 
\end{align}
where $\|\bX\|_*$ is the nuclear norm of $\bX$.
The main benefit of \eqref{eq:rank_min_cnvx} is the fact it is convex and in the seminal work of  Cand{\`e}s and Recht \cite{candes2012exact} they proved if enough entries of $\bm{M}$ were known then the matrix could be perfectly recovered with high probability. Central to this approach is the assumption the matrix being recovered has a low-rank structure, which very well may not be the case. 
In \cite{garnerspec}, our experiments demonstrated if spectral information was known about the matrix being recovered than our methods could nearly double the chance at recover when compared to solving \eqref{eq:rank_min_cnvx}.

Thus, we are motivated to apply \eqref{eqn:gen_spec_coord} to propose the following matrix completion model with spectral constraints, 
\begin{align}\label{eq:mat_comp_spec}
\min&\;\; F(\bX) \\
\text{s.t.}&\; X_{i,j} = M_{i,j}, \; (i,j) \in \Omega, \nonumber \\
           &\; \bm{\sigma}(\bX) \in \cC, \nonumber \\
           &\; \bX \in \R^{m \times n}, \nonumber 
\end{align}
where $\cC$ is derived from the spectral information at our disposal and $F:\R^{m \times n} \rightarrow \R$ is an objective function chosen to bias the matrix as dictated by the user.  
For example, it might be the case that a subset of our known entries are noisy, then it would make sense to let $F(\bX) := \frac{1}{2}\| P_{\Omega'}(\bX - \bm{M})\|_F^2$, where 
\[
\left[P_{\Omega'}(\bX)\right]_{ij} = \begin{cases} 0 &,\; (i,j) \in \Omega' \\ X_{ij} &,\; \text{otherwise}   \end{cases},   
\]
for all $i,j$, and $\Omega'$ is the subset of the known entries which are noisy. 
Hence, in this example we would replace $\Omega$ in the constraint with $\Omega''$ to be the set of known noiseless entries, yielding the noisy matrix completion model with spectral constraints,
\begin{align}\label{eq:mat_comp_spec_noisy}
\min&\;\; \frac{1}{2}\| P_{\Omega'}(\bX - \bm{M})\|_F^2 \\
\text{s.t.}&\; X_{i,j} = M_{i,j}, \; (i,j) \in \Omega'', \nonumber \\
           &\; \bm{\sigma}(\bX) \in \cC, \nonumber \\
           &\; \bX \in \R^{m \times n}. \nonumber 
\end{align}
It is worth noting that learning structural information about the spectrum for particular problems is very possible in practice; see for example Theorem 1 of \cite{kasten2019algebraic}. 

\subsection{And More...}
The prior examples should hopefully be sufficient to see the breadth of problems modeled and extended by \eqref{eqn:gen_spec_coord}. Further examples include: low-rank matrix optimization \cite{zhu2018global,nguyen2019low}, tensor completion \cite{gandy2011tensor}, and synchronization problems \cite{lerman2022robust,shi2020message}, among others; however, we do not have the space here to elaborate on these important applications.  

\section{An Equivalent Reformulation}\label{sec:reformulation}
The primary approach we take in this paper to solve \eqref{eqn:gen_spec_coord} is to form a decomposed model by utilizing standard matrix and tensor factorizations. 
For example, in the case $\mathbb{S} = \Symn$ in \eqref{eqn:gen_spec_coord} we utilize the eigenvalue decomposition. 
It is well known a symmetric matrix $\bX$ has an eigenvalue/spectral decomposition such that $\bX = \bm{Q} \Diag(\blambda(\bX)) \bm{Q}^\top$ where $\bm{Q} \in \mathcal{O}(n,n)$, the matrix manifold of orthogonal matrices, and $\Diag(\blambda(\bX))$ is the real diagonal matrix formed by the eigenvalues of $\bX$. 
Thus, using the eigenvalue decomposition, we can equivalently rewrite  
\begin{align}\label{eq:sym_model}
\min&\; F(\bX) \\ 
\text{s.t.}&\; \bm{G}(\bX) = \bm{0}, \nonumber \\
&\; \bm{g}(\blambda(\bX)) \leq \bm{0},\nonumber \\
&\bX \in \Symn, \nonumber
\end{align}
as the decomposed model 
\begin{align}\label{eq:decomp_model}
\min&\; f(\bQ, \blambda):= F(\bQ \Diag(\blambda) \bQ^\top) \\ 
\text{s.t.}&\; \bm{G}(\bQ \Diag(\blambda) \bQ^\top) = \bm{0}, \nonumber \\
&\; \bm{\tilde{g}}(\blambda) \leq \bm{0}, \nonumber \\
&\bQ \in \mathcal{O}(n,n),\; \blambda \in \R^n, \nonumber
\end{align}
where ${\bm{\tilde{g}}}$ additionally enforces the condition the elements of $\blambda$ are in decreasing order, i.e., $\lambda_i \geq \lambda_{i+1}$ for all $i$. 
Now, it is clear the global minimums of \eqref{eq:sym_model} and \eqref{eq:decomp_model} are equivalent, but there might be some concern that the decomposed model introduces new local minimums which do not correspond to local minimums in the original model;
however, we prove this is not the case for local minimizers of \eqref{eq:sym_model} which have no repeated eigenvalues. 
Since a minor perturbation of the constraint in \eqref{eq:sym_model} ensures all feasible matrices have unique eigenvalues, our result demonstrates the strong tie between the local minimums of the decomposed model and the original which supports our decomposition approach to solving \eqref{eqn:gen_spec_coord}. 
%
%

Before stating and proving our main result, we first introduce a few definitions and prove a set of lemmas. 
Let $\R^n_{\geq}:=\{ \bx \in \R^n \; | \; x_1 \geq  x_2 \geq \cdots \geq x_n\}$, $\|\cdot\|_F$ and $\|\cdot\|$ denote the Frobenius and Euclidean norms respectively, and given $\bX \in \Symn$, $\bQ \in \mathcal{O}(n,n)$, $\blambda \in \R^n_{\geq}$ and $\delta > 0$ we define
\begin{equation}\label{eq:spec_ball}
\bm{B}\left( (\bQ,\blambda), \delta \right) := \left\{\bar{\bQ}\Diag(\bar{\blambda}) \bar{\bQ}^\top \; | \; \|\bar{\bQ} - \bQ \|_F \leq \delta, \; \|\bar{\blambda} - \blambda\| \leq \delta, \; \bar{\bQ} \in \mathcal{O}(n,n), \;\bar{\blambda}\in \R^n_{\geq}    \right\}, 
\end{equation}
\begin{equation}\label{eq:ball}
\bm{B}\left(\bX, \delta \right) := \left\{\bar{\bX} \in \Symn \; | \; \|\bar{\bX} - \bX \|_F \leq \delta \right\}.
\end{equation}
\begin{lemma}\label{lem:lemma_1}
Given $\bX \in \Symn$, $\delta>0$, and a spectral decomposition of $\bX$ such that $\bX = \bQ_x \Diag(\blambda(\bX)) \bQ_x^\top$ with $\bQ_x \in \mathcal{O}(n,n)$, there exists $\delta'>0$ such that
\[
\bm{B}\left( (\bQ_x,\blambda(\bX)), \delta' \right) \subseteq \bm{B}\left(\bX, \delta \right).
\]
\end{lemma}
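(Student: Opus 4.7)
The goal is to show that if both factors $\bQ_x$ and $\blambda(\bX)$ in the spectral decomposition are perturbed by at most $\delta'$, the resulting reconstructed matrix $\bar{\bQ}\Diag(\bar{\blambda})\bar{\bQ}^\top$ lies in the Frobenius ball of radius $\delta$ around $\bX$. The strategy is a direct estimation: pick an arbitrary element of $\bm{B}((\bQ_x,\blambda(\bX)),\delta')$, write the difference from $\bX$ as a telescoping sum of three terms so that each term isolates a single perturbation, bound each using the orthogonal invariance of $\|\cdot\|_F$, and finally choose $\delta'$ so that the total bound is at most $\delta$.

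Concretely, let $\bar{\bX} = \bar{\bQ}\Diag(\bar{\blambda})\bar{\bQ}^\top$ with $\|\bar{\bQ}-\bQ_x\|_F \leq \delta'$ and $\|\bar{\blambda}-\blambda(\bX)\|\leq \delta'$. Writing $\bD = \Diag(\blambda(\bX))$ and $\bar{\bD} = \Diag(\bar{\blambda})$, I would decompose
\begin{equation*}
\bar{\bX} - \bX \;=\; \bar{\bQ}(\bar{\bD}-\bD)\bar{\bQ}^\top \;+\; \bar{\bQ}\bD(\bar{\bQ}^\top-\bQ_x^\top) \;+\; (\bar{\bQ}-\bQ_x)\bD\bQ_x^\top.
\end{equation*}
Taking Frobenius norms and applying the triangle inequality, I use two standard facts: (i) pre- or post-multiplication by an orthogonal matrix preserves $\|\cdot\|_F$, and (ii) $\|AB\|_F \leq \|A\|_{\mathrm{op}}\|B\|_F$. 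The first term then equals $\|\bar{\bD}-\bD\|_F = \|\bar{\blambda}-\blambda(\bX)\|$, and the second and third are each bounded by $\|\bD\|_{\mathrm{op}}\|\bar{\bQ}-\bQ_x\|_F = \|\blambda(\bX)\|_\infty\,\|\bar{\bQ}-\bQ_x\|_F$. Combining the three estimates yields
\begin{equation*}
\|\bar{\bX}-\bX\|_F \;\leq\; \bigl(1 + 2\|\blambda(\bX)\|_\infty\bigr)\,\delta'.
\end{equation*}

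The result then follows by choosing $\delta' := \delta / (1 + 2\|\blambda(\bX)\|_\infty)$, which is well-defined and strictly positive. There is no substantial obstacle: the only mild care needed is to ensure the decomposition telescopes cleanly and that the orthogonal-invariance step is applied to the correct side of each matrix product, both of which are routine. Symmetry of $\bar{\bX}$ is immediate from its factored form, so $\bar{\bX}\in\Symn$ and membership in $\bm{B}(\bX,\delta)$ follows from the Frobenius estimate above.
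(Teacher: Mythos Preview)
Your proof is correct and follows essentially the same approach as the paper: a three-term telescoping decomposition of $\bar{\bX}-\bX$ followed by triangle inequality and orthogonal invariance of the Frobenius norm. Your version is actually a bit cleaner than the paper's, since by placing the \emph{unperturbed} diagonal $\bD=\Diag(\blambda(\bX))$ in the two outer terms (rather than the perturbed $\bar{\bD}$, as the paper does) you avoid a quadratic-in-$\delta'$ term and can take the simple choice $\delta'=\delta/(1+2\|\blambda(\bX)\|_\infty)$; the paper instead must handle $\|\bar{\blambda}\|\le\|\blambda(\bX)\|+\delta'$ and ends up with $\delta'=\min\{\delta/2,\;\delta/(2(\delta+2\|\blambda(\bX)\|))\}$.
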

\begin{proof}
Define 
\[
\delta':= \min \left\{ \frac{\delta}{2}, \; \frac{\delta}{2(\delta+2\|\blambda(\bX)\|)} \right\}. 
\]
Then, for all $\bQ \in \mathcal{O}(n,n)$ such that $\|\bQ - \bQ_x\|_F \leq \delta'$ and $\blambda \in \R^n_{\geq}$ such that $\|\blambda - \blambda(\bX)\| \leq \delta'$
\begin{align}
\| \bQ \Diag(\blambda) \bQ^\top - \bX\|_F &= \| \bQ \Diag(\blambda) \bQ^\top - \bQ_x \Diag(\blambda(\bX)) (\bQ_x)^\top\|_F \nonumber \\
&\leq \| (\bQ - \bQ_x) \Diag(\blambda) \bQ^\top \|_F + \|\Diag(\blambda) - \Diag(\blambda(\bX))\|_F \nonumber \\
&\hspace{1.0in}+ \| \bQ_x \Diag(\blambda(\bX))(\bQ-\bQ_x)^\top\|_F \nonumber \\
&\leq \|\bQ - \bQ_x\|_F \|\blambda\| + \|\blambda - \blambda(\bX)\| + \| \blambda(\bX)\|\cdot \|\bQ - \bQ_x\|_F  \nonumber \\
&\leq \delta'(2\|\blambda(\bX)\| + \delta') + \delta' \nonumber \\
&\leq \frac{\delta}{2(\delta+2\|\blambda(\bX)\|)} \cdot \left( 2\|\blambda(\bX)\|+ \delta\right) + \frac{\delta}{2} \nonumber \\
&= \delta, \nonumber  
\end{align}
and the result follows by the definition of $\bm{B}\left( (\bQ_x,\blambda(\bX)), \delta' \right)$.
\end{proof}
This lemma is crucial to be able to claim that any local minimum in \eqref{eq:sym_model} generates a local minimum in \eqref{eq:decomp_model}. We next move to a lemma which demonstrates an equivalence between different representations of the same symmetric matrix. 
Note, in Lemma~\ref{lem:lemma_2}, we let $\bm{e}_i$ denote the vector of the of all zeros except with a single one in the $i$-th entry. 

\begin{lemma}\label{lem:lemma_2}
Given $\bX \in \Symn$ with unique eigenvalues,  $\delta \geq 0$, and $\bm{E} \in \left[ \pm 1 \bm{e}_1 \;| \; \cdots \;| \; \pm 1 \bm{e}_n \right]$, we have for any spectral decomposition of $\bX$, i.e., $\bX = \bQ_x \Diag(\blambda(\bX)) \bQ_x^\top$ with $\bQ_x \in \mathcal{O}(n,n)$, that
\[
\bm{B}\left( (\bQ_x,\blambda(\bX)), \delta \right) 
= 
\bm{B}\left( (\bQ_x\bm{E},\blambda(\bX)), \delta \right).
\]
\end{lemma}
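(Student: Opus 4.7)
The plan is to prove the equality by exhibiting an explicit bijection between representations. The crucial algebraic fact is that $\bm{E}$ is orthogonal with $\bm{E}^2 = \bm{I}$ and satisfies $\bm{E}\Diag(\bar{\blambda})\bm{E}^\top = \Diag(\bar{\blambda})$ for every $\bar{\blambda}$, since conjugating a diagonal matrix by a diagonal matrix leaves it unchanged. Combined with the orthogonal invariance of the Frobenius norm, i.e., $\|\bm{M}\bm{O}\|_F = \|\bm{M}\|_F$ for every orthogonal $\bm{O}$, this is all the machinery the proof requires.

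For the forward inclusion, I would take an arbitrary element $\bar{\bQ}\Diag(\bar{\blambda})\bar{\bQ}^\top \in \bm{B}((\bQ_x,\blambda(\bX)),\delta)$ and set $\tilde{\bQ}:=\bar{\bQ}\bm{E}$. Then $\tilde{\bQ}\in\mathcal{O}(n,n)$ as a product of orthogonal matrices, and
\[
\tilde{\bQ}\Diag(\bar{\blambda})\tilde{\bQ}^\top \;=\; \bar{\bQ}\bm{E}\Diag(\bar{\blambda})\bm{E}^\top\bar{\bQ}^\top \;=\; \bar{\bQ}\Diag(\bar{\blambda})\bar{\bQ}^\top,
\]
so $(\tilde{\bQ},\bar{\blambda})$ represents the same matrix. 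The distance bound transfers via
\[
\|\tilde{\bQ} - \bQ_x\bm{E}\|_F \;=\; \|(\bar{\bQ}-\bQ_x)\bm{E}\|_F \;=\; \|\bar{\bQ}-\bQ_x\|_F \;\leq\; \delta,
\]
while the requirements $\|\bar{\blambda}-\blambda(\bX)\|\leq\delta$ and $\bar{\blambda}\in\R^n_\geq$ are untouched. Hence the point lies in $\bm{B}((\bQ_x\bm{E},\blambda(\bX)),\delta)$.

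The reverse inclusion is the mirror of this argument, exploiting $\bm{E}^{-1}=\bm{E}$ so that $(\bQ_x\bm{E})\bm{E}=\bQ_x$: given a representation $(\bar{\bQ},\bar{\blambda})$ of a point in $\bm{B}((\bQ_x\bm{E},\blambda(\bX)),\delta)$, the same substitution $\tilde{\bQ}:=\bar{\bQ}\bm{E}$ returns a representation relative to $\bQ_x$ with the $\delta$-bounds preserved. There is no real obstacle here; the entire lemma is a direct algebraic verification built around the identity $\bm{E}\Diag(\cdot)\bm{E}^\top=\Diag(\cdot)$ and Frobenius-norm invariance. I note in passing that the hypothesis of unique eigenvalues does not appear to enter this particular argument, and is presumably maintained because the lemma will be combined with Lemma~\ref{lem:lemma_1} in the subsequent local-minimum equivalence result, where distinct eigenvalues pin down the spectral decomposition up to precisely the sign ambiguity addressed here.
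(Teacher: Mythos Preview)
Your proof is correct and follows essentially the same approach as the paper: both arguments exhibit the bijection $\bar{\bQ}\mapsto\bar{\bQ}\bm{E}$ between representations, using $\bm{E}\Diag(\bar{\blambda})\bm{E}^\top=\Diag(\bar{\blambda})$ and the orthogonal invariance of the Frobenius norm to carry the $\delta$-bounds across. Your remark that the unique-eigenvalue hypothesis is not actually invoked in this particular argument is also accurate.
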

\begin{proof}
Let $\bQ \Diag(\blambda) \bQ^\top \in \bm{B}\left( (\bQ_x,\blambda(\bX)), \delta \right)$. Then, $\|\bQ - \bQ_x\|_F \leq \delta$, $\|\lambda - \lambda(\bX)\| \leq \delta$, and
\[
 \|\bQ\bm{E} - \bQ_x\bm{E}\|_F = \|\bQ - \bQ_x\|_F \leq \delta
\]
which implies
\[
\bQ \bm{E} \Diag(\blambda) \bm{E}^\top \bQ^\top =\bQ \Diag(\blambda)\bQ^\top  \in \bm{B}\left( (\bQ_x\bm{E},\blambda(\bX)), \delta \right).
\]
On the other hand, letting $\tilde{\bQ} \Diag(\blambda)\tilde{\bQ}^\top \in 
\bm{B}\left( (\bQ_x\bm{E},\blambda(\bX)), \delta \right)$ and setting $\bQ = \tilde{\bQ}\bm{E}$ we see
\[
\|\bQ - \bQ_x\|_F = \| \tilde{\bQ}\bm{E} - \bQ_x \|_F = \| \tilde{\bQ} - \bQ_x \bm{E} \|_F \leq \delta.
\]
Thus, 
\[
 \tilde{\bQ} \Diag(\blambda) \tilde{\bQ}^\top = \bQ \Diag(\blambda) \bQ^\top\in 
\bm{B}\left( (\bQ_x,\blambda(\bX)), \delta \right). 
\]
\end{proof}
This lemma demonstrates if we are in the setting where $\bX$ has only unique eigenvalues then any of the neighborhoods about the different representations of $\bX$ create the same image in the space of symmetric matrices. 
From this we shall be able to conclude that a local minimum at any one of the representations shall imply a local minimum at all of the representations. 
Finally, we want to demonstrate that we can generate a ball in the space of symmetric matrices which shall be contained inside the image of the balls over decompositions.   
Note, in the event the argument is a matrix, $\|\cdot\|_2$ denotes the matrix 2-norm. 

\begin{lemma}\label{lem:lemma_3}
Given $\bX \in \Symn$ with unique eigenvalues, $\delta'>0$, and the spectral decomposition $\bX = \bQ_x \Diag(\blambda(\bX)) \bQ_x^\top$, there exists $\delta''>0$ such that
\[
\bm{B}\left(\bX, \delta'' \right) \subset \bm{B}\left( (\bQ_x,\blambda(\bX)), \delta' \right).
\]
\end{lemma}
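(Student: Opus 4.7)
The plan is to invoke standard perturbation theory for symmetric matrices to control both the eigenvalues and the eigenvectors of any nearby $\bar{\bX}$ in terms of $\|\bar{\bX}-\bX\|_F$, and then to use the fact that $\bX$ has only simple eigenvalues to eliminate the usual eigenvector ambiguity.

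First, I would set $\gamma := \min_{1\le i\le n-1}(\lambda_i(\bX)-\lambda_{i+1}(\bX))>0$, which is strictly positive because the eigenvalues are distinct. By Weyl's inequality applied to symmetric matrices, every $\bar{\bX}\in\Symn$ satisfies $|\lambda_i(\bar{\bX})-\lambda_i(\bX)|\le \|\bar{\bX}-\bX\|_2\le \|\bar{\bX}-\bX\|_F$ for each $i$, so choosing $\delta''<\gamma/3$ guarantees that $\bar{\bX}$ also has $n$ simple eigenvalues, each separated from the rest by at least $\gamma/3$, and moreover $\|\blambda(\bar{\bX})-\blambda(\bX)\|\le \sqrt{n}\,\delta''$. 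This handles the eigenvalue component of the required bound.

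Next, let $\bQ_x=[\bm{q}_1\,|\,\cdots\,|\,\bm{q}_n]$ be the given spectral decomposition of $\bX$. Fix any spectral decomposition $\bar{\bX}=\tilde{\bQ}\Diag(\blambda(\bar{\bX}))\tilde{\bQ}^\top$ with $\tilde{\bQ}=[\tilde{\bm{q}}_1\,|\,\cdots\,|\,\tilde{\bm{q}}_n]\in\mathcal{O}(n,n)$. Because the eigenspaces of $\bX$ are one-dimensional and separated by at least $\gamma$, the Davis--Kahan $\sin\Theta$ theorem (or, more concretely, a direct resolvent estimate using the $\gamma$-gap) yields a constant $C=C(\gamma)$ such that, for every $i$,
\[
\min\bigl(\|\tilde{\bm{q}}_i-\bm{q}_i\|,\,\|\tilde{\bm{q}}_i+\bm{q}_i\|\bigr)\le C\,\|\bar{\bX}-\bX\|_F.
\]
For each $i$, pick the sign $\varepsilon_i\in\{\pm 1\}$ achieving this minimum, define $\bm{E}=\Diag(\varepsilon_1,\dots,\varepsilon_n)$, and set $\bar{\bQ}:=\tilde{\bQ}\bm{E}$. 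Then $\bar{\bQ}\in\mathcal{O}(n,n)$, $\bar{\bX}=\bar{\bQ}\Diag(\blambda(\bar{\bX}))\bar{\bQ}^\top$, and summing the squared column bounds gives
\[
\|\bar{\bQ}-\bQ_x\|_F^2=\sum_{i=1}^n\|\varepsilon_i\tilde{\bm{q}}_i-\bm{q}_i\|^2\le nC^2\|\bar{\bX}-\bX\|_F^2.
\]

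Finally, I would choose
\[
\delta'' := \min\!\left\{\frac{\gamma}{3},\ \frac{\delta'}{\sqrt{n}},\ \frac{\delta'}{\sqrt{n}\,C}\right\},
\]
so that any $\bar{\bX}\in\bm{B}(\bX,\delta'')$ yields $\|\bar{\bQ}-\bQ_x\|_F\le \delta'$ and $\|\blambda(\bar{\bX})-\blambda(\bX)\|\le \delta'$, whence $\bar{\bX}\in\bm{B}((\bQ_x,\blambda(\bX)),\delta')$, proving the claim. The main obstacle is the sign/gauge ambiguity in the eigenvectors: a raw continuity statement for eigenvectors is false without choosing signs, so the key step is combining Davis--Kahan with the simplicity of the eigenvalues to pick the correct $\bm{E}$ column by column; Lemma~\ref{lem:lemma_2} is conceptually what justifies that this sign choice loses no generality, although here we only need the forward direction of producing one good representative.
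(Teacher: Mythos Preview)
Your proposal is correct and follows essentially the same strategy as the paper: use Weyl's inequality to control the eigenvalue perturbation and preserve simplicity, then invoke an eigenvector perturbation bound exploiting the spectral gap to control the eigenvector matrix up to signs. The only cosmetic differences are that the paper cites a specific quantitative bound (Theorem~3.3.7 in Ortega) with explicit constants in place of your abstract Davis--Kahan constant $C(\gamma)$, and the paper applies the sign matrix $\bm{E}$ to $\bQ_x$ and then invokes Lemma~\ref{lem:lemma_2} to transfer back to $\bm{B}((\bQ_x,\blambda(\bX)),\delta')$, whereas you apply $\bm{E}$ to the perturbed eigenvector matrix $\tilde{\bQ}$ directly, which is slightly more economical and avoids the explicit appeal to Lemma~\ref{lem:lemma_2}.
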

\begin{proof}
Define $\tau:=\min_{1 \leq i \leq n-1}\; | \lambda_i(\bX) - \lambda_{i+1}(\bX)|$ and let 
\[
\delta'' = \min\left\{1, \tau/2, \delta', \frac{\tau^2}{\sqrt{4n\tau^2 + 16n}}\delta'\right\}.
\]
Then, given $\bZ \in \Symn$ such that $\|\bZ - \bX\|_F \leq \delta''$ it follows 
\[
\|\blambda(\bZ) - \blambda(\bX)\| \leq \|\bZ - \bX\|_2 \leq \|\bZ - \bX\|_F \leq \delta'',
\]
which implies the eigenvalues of $\bZ$ are simple. 
By Lemma~\ref{lem:lemma_2}, the result shall follow provided $\bZ \in B( (\bQ_x \bm{E}), \blambda(\bX)), \delta')$ for some $\bm{E}$ as defined in Lemma~\ref{lem:lemma_2}. 
Now, by Theorem~3.3.7 in \cite{Ortega_Num_Analysis} it follows there exists normalized eigenvectors $\bm{u}_j$ and $\bm{v}_j$ of $\bZ$ and $\bX$ corresponding to $\lambda_j(\bZ)$ and $\lambda_j(\bX)$ respectively for $j=1,\hdots, n$ such that 
\begin{equation}\label{eq:lem3_help1}
\| \bu_j - \bm{v}_j \| \leq \gamma(1+\gamma^2)^{1/2},
\end{equation}
where $\gamma = \| \bZ - \bX\|_2/ (\tau - \| \bZ - \bX\|_2)$. 
Since $\bZ$ and $\bX$ both have simple eigenvalues there exists $\bm{E}$ and $\bm{Q}_z$ such that 
\[
\bm{Q}_x \bm{E} = \left[ \bm{v}_1 \;|\; \cdots \;|\; \bm{v}_n \right] \text{ and } \bm{Q}_z = \left[ \bm{u}_1 \;|\; \cdots \;|\; \bm{u}_n \right] 
\]
with $\bZ = \bm{Q}_z \Diag(\blambda(\bZ)) \bm{Q}_z^\top$ and $\bm{Q}_z \in \mathcal{O}(n,n)$. 
Noting $\gamma \leq {2\delta''}/{\tau}$ we see 
\begin{align}
\| \bm{Q}_z - \bm{Q}_x\bm{E}\|_F^2 &\leq \sum_{i=1}^n \|\bm{u}_i - \bm{v}_i \|^2 \nonumber \\
&\leq n \gamma^2 (1+\gamma^2) \nonumber \\
&\leq \frac{4n}{\tau^2}(\delta'')^2 \left(1 + \frac{4 (\delta'')^2}{\tau^2}\right) \nonumber \\
&\leq \left( \frac{4n\tau^2 + 16n}{\tau^4}\right)(\delta'')^2 \nonumber  \\
&\leq (\delta')^2, \nonumber 
\end{align}
where the second inequality followed from \eqref{eq:lem3_help1} and the remaining inequalities were a result of the definition of $\delta''$.
\end{proof}

We now combine our lemmas to prove our main result, but first we need to define a local minimum for our models. 
\begin{definition}\label{def:local_min}
Define the following feasible regions for \eqref{eq:sym_model} and \eqref{eq:decomp_model}: 
\[
\mathcal{X}_1:= \left\{\bX \in \Symn \; | \; \bm{G}(\bX) = \bm{0}, \; \blambda(\bX) \in \mathcal{C} \right\},
\]
\[
\mathcal{X}_2:=\left\{ (\bQ, \blambda) \in \mathcal{O}(n,n) \times \R^n_{\geq} \; | \; \bm{G}(\bQ \Diag(\blambda) \bQ^\top) = \bm{0}, \; \blambda \in \mathcal{C} \right\},
\]
where $\cC$ is a closed subset of $\R^n$. 
Then, the matrix $\bX^* \in \Symn$ is a local minimum of \eqref{eq:sym_model} if there exists $\delta_1 \geq 0$ such that $F(\bX^*) \leq F(\bX)$  for all $\bX \in \bm{B}(\bX^*,\delta_1)\cap \mathcal{X}_1$. 
Similarly, $(\bQ^*,\blambda^*) \in \mathcal{O}(n,n) \times \R^n_{\geq}$ is a local minimum of \eqref{eq:decomp_model} provided there exists $\delta_2 \geq 0$ such that $f(\bQ^*,\blambda^*) \leq f(\bQ,\blambda)$ for all\\ 
\[
(\bQ,\blambda) \in \left\{ (\bar{\bQ}, \bar{\blambda}) \in \mathcal{O}(n,n) \times \R^n_{\geq} \; | \; \|\bar{\bQ} - \bQ^* \|_F \leq \delta_2, \; \|\bar{\blambda} - \blambda^*\| \leq \delta_2\right\} \cap \mathcal{X}_2.
\]
\end{definition}
We now present our main theorem for this section. 
\begin{theorem}\label{thm:local_mins}
A matrix $\bX^* \in \Symn$ with unique eigenvalues is a local minimum of \eqref{eq:sym_model} if and only if one of its spectral decompositions is a local minimum of \eqref{eq:decomp_model}. Furthermore, if one of the spectral decompositions of $\bX^*$ is a local minimum of \eqref{eq:decomp_model}, then all of its spectral decompositions form local minimums. 
\end{theorem}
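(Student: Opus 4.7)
The plan is to use the three lemmas as topological bridges between the ball $\bm{B}(\bX^*, \delta)$ in the symmetric matrix space and the image ball $\bm{B}((\bQ, \blambda), \delta)$ of a neighborhood in the factor space $\mathcal{O}(n,n) \times \R^n_{\geq}$. Concretely, Lemma~\ref{lem:lemma_1} delivers the inclusion of a factor-ball into a matrix-ball, which pushes minimality from \eqref{eq:sym_model} down to \eqref{eq:decomp_model}; Lemma~\ref{lem:lemma_3} delivers the reverse inclusion under the simple-eigenvalue hypothesis, which handles the converse; and Lemma~\ref{lem:lemma_2} identifies the image balls associated with the different sign-flipped versions of a given spectral decomposition, which is indispensable for the ``furthermore'' statement.

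For the forward direction I would fix any spectral decomposition $\bX^* = \bQ^* \Diag(\blambda^*)(\bQ^*)^\top$ with $\blambda^* = \blambda(\bX^*) \in \R^n_{\geq}$ and let $\delta_1 \geq 0$ be the local-min radius for $\bX^*$ in \eqref{eq:sym_model}. Lemma~\ref{lem:lemma_1} then furnishes a $\delta_2 > 0$ with $\bm{B}((\bQ^*, \blambda^*), \delta_2) \subseteq \bm{B}(\bX^*, \delta_1)$; any feasible $(\bQ, \blambda) \in \mathcal{X}_2$ within distance $\delta_2$ of $(\bQ^*, \blambda^*)$ produces a matrix $\bQ \Diag(\blambda) \bQ^\top$ that is feasible in $\mathcal{X}_1$ and sits inside $\bm{B}(\bX^*, \delta_1)$, so the local-min inequality at $\bX^*$ transfers directly to $f(\bQ, \blambda) \geq f(\bQ^*, \blambda^*)$.

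For the reverse direction I would instead start with a local-min radius $\delta_2$ at $(\bQ^*, \blambda^*)$ in \eqref{eq:decomp_model} and invoke Lemma~\ref{lem:lemma_3} to obtain $\delta_1$ with $\bm{B}(\bX^*, \delta_1) \subset \bm{B}((\bQ^* \bm{E}, \blambda^*), \delta_2)$, where $\bm{E}$ is the diagonal sign matrix arising from the eigenvector pairing inside the proof of Lemma~\ref{lem:lemma_3}; Lemma~\ref{lem:lemma_2} then identifies this image ball with $\bm{B}((\bQ^*, \blambda^*), \delta_2)$. Any $\bX \in \bm{B}(\bX^*, \delta_1) \cap \mathcal{X}_1$ can therefore be written as $\bQ \Diag(\blambda) \bQ^\top$ with $(\bQ, \blambda) \in \mathcal{X}_2$ inside the $\delta_2$-neighborhood of $(\bQ^*, \blambda^*)$, and the local-min inequality at $(\bQ^*, \blambda^*)$ propagates back to $F(\bX) \geq F(\bX^*)$.

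For the ``furthermore'' part, uniqueness of eigenvalues (with $\blambda$ ordered decreasingly) forces every spectral decomposition of $\bX^*$ to take the form $(\bQ^* \bm{E}, \blambda^*)$ for some diagonal $\pm 1$ matrix $\bm{E}$. Because $\bm{E}$ is diagonal, $\bm{E} \Diag(\blambda) \bm{E}^\top = \Diag(\blambda)$, so both $f$ and feasibility in $\mathcal{X}_2$ are invariant under $\bQ \mapsto \bQ \bm{E}$, and the isometry $\|\bQ \bm{E} - \bQ^* \bm{E}\|_F = \|\bQ - \bQ^*\|_F$ preserves ball radii; a change of variables then transports a local-min at $(\bQ^*, \blambda^*)$ to one at $(\bQ^* \bm{E}, \blambda^*)$. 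The main obstacle is the bookkeeping in the reverse direction: Lemma~\ref{lem:lemma_3} hands back a factorization matching $\bQ^* \bm{E}$ rather than $\bQ^*$ itself, and it is precisely Lemma~\ref{lem:lemma_2} that absorbs this sign ambiguity, which is also why the unique-eigenvalue hypothesis (feeding Lemma~\ref{lem:lemma_3}) cannot be dropped.
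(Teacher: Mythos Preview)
Your proposal is correct and follows essentially the same approach as the paper: Lemma~\ref{lem:lemma_1} for the forward direction and Lemma~\ref{lem:lemma_3} for the converse. Two small presentational differences are worth noting. In the reverse direction you reach into the proof of Lemma~\ref{lem:lemma_3} and explicitly surface the sign matrix $\bm{E}$ and the appeal to Lemma~\ref{lem:lemma_2}; the paper simply invokes the \emph{statement} of Lemma~\ref{lem:lemma_3}, which already delivers $\bm{B}(\bX^*,\delta_1)\subset\bm{B}((\bQ^*,\blambda^*),\delta_2)$ with the sign ambiguity handled internally. For the ``furthermore'' clause, you give a direct symmetry argument under $\bQ\mapsto\bQ\bm{E}$, whereas the paper deduces it by composing the two directions (one decomposition a local min $\Rightarrow$ $\bX^*$ a local min $\Rightarrow$ every decomposition a local min, since the forward argument applied to an arbitrary decomposition). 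Both routes are valid and of comparable length.
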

\begin{proof}
Assume $\bX^*$ is a local minimum of \eqref{eq:sym_model}. Thus, there exists $\delta_1 \geq 0$ such that $F(\bX^*) \leq F(\bX)$  for all $\bX \in \bm{B}(\bX^*,\delta_1)\cap \mathcal{X}_1$. 
Let $\bQ^* \Diag(\blambda(\bX^*)) \bQ^*$ be any spectral decomposition of $\bX^*$. Then by Lemma~\ref{lem:lemma_1} there exists $\delta_2 \geq 0$ such that 
$
\bm{B}\left( (\bQ^*,\blambda(\bX^*),\delta_2 \right) \subseteq \bm{B}(\bX^*,\delta_1).
$
Therefore, it follows that $f(\bQ^*,\blambda(\bX^*)) \leq f(\bQ,\blambda)$ for all  
\[
(\bQ,\blambda) \in \left\{ (\bar{\bQ}, \bar{\blambda}) \in \mathcal{O}(n,n) \times \R^n \; | \; \|\bar{\bQ} - \bQ^* \|_F \leq \delta_2, \; \|\bar{\blambda} - \blambda(\bX^*)\| \leq \delta_2\right\} \cap \mathcal{X}_2.
\]
Since no particular spectral decomposition of $\bX^*$ was selected, any spectral decomposition of $\bX^*$ generates a local minimum of \eqref{eq:decomp_model}. To prove the other direction, assume $(\bQ^*,\blambda^*) \in \mathcal{O}(n,n) \times \R^n_{\geq}$ is a local minimum of \eqref{eq:decomp_model}. Then there exists $\delta_1 \geq 0$ such that $f(\bQ^*,\blambda^*) \leq f(\bQ,\blambda)$ for all 
\[
(\bQ,\blambda) \in \left\{ (\bar{\bQ}, \bar{\blambda}) \in \mathcal{O}(n,n) \times \R^n_{\geq} \; | \; \|\bar{\bQ} - \bQ^* \|_F \leq \delta_2, \; \|\bar{\blambda} - \blambda^*\| \leq \delta_2\right\} \cap \mathcal{X}_2
\]
which is equivalent to stating $F(\bQ^*\Diag(\blambda^*)(\bQ^*)^\top) \leq F(\bar{\bX})$ for all $\bar{\bX} \in \bm{B}\left((\bQ^*,\lambda^*), \delta_2\right) \cap \mathcal{X}_1$. By Lemma~\ref{lem:lemma_3}, it follows there exists $\delta_1 \geq 0$ such that $\bm{B}(\bQ^*\Diag(\blambda^*)(\bQ^*)^\top, \delta_1) \cap \mathcal{X}_1 \subseteq  \bm{B}\left((\bQ^*,\lambda^*), \delta_2\right) \cap \mathcal{X}_1$ which implies $\bX^* = \bQ^*\Diag(\blambda^*)(\bQ^*)^\top$ is a local minimum of \eqref{eq:sym_model}. 
\end{proof}

Thus, we have demonstrated the local minimums of the two models are identical in the case of local minimizers without repeated eigenvalues. 
This might seem like a strong restriction, 
but in reality it is always possible to use the eigenvalue constraints to ensure 
no repeated eigenvalues occur. 
Additionally, the continuity of the eigenvalues of a symmetric matrix with respect to perturbations of the coordinates of the matrix ensure that such a restriction only alters the value of the objective function in a boundable fashion, assuming some degree of smoothness in the objective function; 
see Theorem~4 in \cite{garnerspec} for a more substantial discussion on this point. 

\section{A General Block Manifold Model} 
The decomposition approach outlined in Section~\ref{sec:reformulation} leads us to formulate and study a new optimization model, which generalizes beyond the decomposition approach for symmetric matrices presented in \eqref{eq:decomp_model}. 
Thus, the central model guiding the remainder of our discussion is the block manifold model:
\begin{align}\label{eq:main_model}
\min&\; f(\bx, \by)  \\
\text{s.t.}&\; \bc(\bx, \by) = \bm{0}, \nonumber \\
&\; \bm{g}(\by) \leq \bm{0}, \nonumber \\
&\; \bx \in \cM, \; \by \in \R^n, \nonumber 
\end{align}
where $f:\cM \times \R^n \rightarrow \R$, $\bc:\cM \times \R^n \rightarrow \R^p$, and $\bm{g}: \R^n \rightarrow \R^s$ are differentiable and $\cM$ is a differentiable submanifold in a Euclidean space.
Depending on the choice of $\mathbb{S}$ in \eqref{eqn:gen_spec_coord}, the corresponding manifold in \eqref{eq:main_model} will vary, as dictated by the decomposition utilized. 
For example, if $\mathbb{S} = \R^{m \times n}$, then we shall apply a singular value decomposition to form the variables $\bx$ and $\by$ in \eqref{eq:main_model} and $\cM$ shall be a Cartesian product of Stiefel manifolds.
In this fashion, \eqref{eq:main_model} can be utilized to model all presented versions of \eqref{eq:main_model}. 
In the remainder of this paper, we shall develop, analyze,  and test an algorithm which converges to approximate first-order KKT points of \eqref{eq:main_model} when viewed as a constrained Riemannian optimization problem.  

\subsection{Riemannian Manifold Optimization Revisited}
As optimization over smooth manifolds shall be a major aspect of our discussion, we begin by presenting a brief introduction to the major geometric objects to be leveraged. 
Our presentation of Riemannian geometry for optimization is restricted to differentiable submanifolds of Euclidean spaces, i.e., vector spaces with a defined inner product, as this covers our examples of interest; however, our later analysis holds for general smooth Riemannian manifolds. 
The notation and descriptions presented in this section are inspired and drawn heavily from Chapters 3 and 10 of \cite{boumal2023introduction}. 
For readers interested in further details, the authors highly recommend Boumal's excellent text.

At a high level, a smooth manifold is a generalization of linear spaces in such a way calculus operations such as differentiation, integration, etc., are well-defined and made possible. 
In our presentation, we let $\mathcal{E}$ refer to a general finite dimensional vector space. If $\cE$ additionally has a defined inner product, then we call $\cE$ an Euclidean space.  
The simplified definition of an embedded submanifold of $\cE$ we choose to present is provided by Boumal: 
\begin{definition}\label{def:manifold}[Definition 3.6, \cite{boumal2023introduction}] 
Let $\cM$ be a subset of $\cE$. 
We say $\cM$ is a (smooth) embedded submanifold of $\cE$ if either of the following holds: 
\vspace{-0.1in}
\begin{enumerate}
\item $\cM$ is an open subset of $\cE$. Then we call $\cM$ an open submanifold. If $\cM = \cE$, we also call it a linear manifold.
\item For a fixed integer $k\geq 1$ and for each $\bx \in \cM$ there exists a neighborhood $U$ of $\bx$ in $\cE$ and a smooth function $h:U \rightarrow \R^k$ such that 
\begin{enumerate}
    \item If $\by \in  U$, then $h(\by) = \bm{0}$ if and only if $\by \in \cM$; and 
    \item rank $Dh(\bx) = k$.
\end{enumerate}
Such a function $h$ is called a local defining function of $\cM$ at $\bx$. 
\end{enumerate}
\vspace{-0.1in}
The set $\cE$ is often referred to as the embedding space or the ambient space of $\cM$. 
\end{definition}
An experienced student of Riemannian geometry shall note from $U$ and $h$ via the inverse function theorem charts in the more traditional presentation of general smooth manifolds can be obtained.

\begin{definition}\label{def:tangent_vector}[Definitions 3.7 \& 3.10, \cite{boumal2023introduction}]
Let $\cM$ be an embedded submanifold of $\mathcal{E}$. For all $\bx \in \cM$, the tangent space of $\cM$ at $\bx$ is defined as the set 
\begin{equation}\label{eq:TxM}
\mathcal{T}_{\bx}\cM := \{c'(0) \; | \; c: \mathcal{I} \rightarrow \cM \; \text{ is smooth around $0$ and } c(0) = \bx \}, 
\end{equation}
and elements $\bm{v} \in \mathcal{T}_{\bx}\cM$ are called tangent vectors to $\cM$ at $\bx$.  
So, $\bm{v} \in \mathcal{T}_{\bx}\cM$ if and only if there is a smooth curve on $\cM$ through $\bx$ such that $c'(0) = \bm{v}$.
\end{definition}

In the case of embedded submanifolds of a vector space, a local defining function at a point $\bx \in \cM$ presents a simple manner to define the set of tangent vectors at $\bx$. 

\begin{theorem}\label{thm:ker_to_tangent_vector}[Theorem 3.8, \cite{boumal2023introduction}]
Let $\cM$ be an embedded submanifold of $\mathcal{E}$. Consider $\bx \in \cM$ and the set $\mathcal{T}_{\bx}\cM$ \eqref{eq:TxM}. If $\cM$ is an open submanifold of $\mathcal{E}$, then $\mathcal{T}_{\bx}\cM = \mathcal{E}$. Otherwise, $\mathcal{T}_{\bx}\cM = \ker\left(Dh(\bx)\right)$ with $h$ any local defining function at $\bx$.  
\end{theorem}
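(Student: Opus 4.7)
The plan is to handle the two cases of Definition~\ref{def:manifold} separately, since each invokes a distinct argument.

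For the open submanifold case, the containment $\mathcal{T}_{\bx}\cM \subseteq \cE$ is immediate, as every $c'(0)$ lies in the ambient vector space. For the reverse direction, take any $\bm{v} \in \cE$ and consider the straight-line curve $c(t) = \bx + t\bm{v}$. Since $\cM$ is open and contains $\bx$, this curve remains inside $\cM$ on some open interval $\mathcal{I}$ around $0$. The curve is smooth and satisfies $c(0) = \bx$ together with $c'(0) = \bm{v}$, so $\bm{v} \in \mathcal{T}_{\bx}\cM$.

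For the general case I split the target equality into two inclusions. The easier inclusion $\mathcal{T}_{\bx}\cM \subseteq \ker Dh(\bx)$ follows by differentiating the identity $h(c(t)) = \bm{0}$, which holds for any smooth curve $c:\mathcal{I}\to\cM$ with $c(0)=\bx$, because for $t$ near $0$ the curve lies in $\cM \cap U \subseteq h^{-1}(\bm{0})$ by part (a) of Definition~\ref{def:manifold}. The chain rule at $t=0$ then gives $Dh(\bx)[c'(0)] = \bm{0}$.

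The harder inclusion $\ker Dh(\bx) \subseteq \mathcal{T}_{\bx}\cM$ is the main obstacle and is where the full-rank assumption on $Dh(\bx)$ is essential. Given $\bm{v} \in \ker Dh(\bx)$, the task is to construct a smooth curve in $\cM$ with velocity $\bm{v}$ at $\bx$. The plan is to invoke the implicit function theorem: since $Dh(\bx)$ is surjective onto $\R^k$, split $\cE = \ker Dh(\bx) \oplus \cW$ so that $Dh(\bx)|_\cW$ is a linear isomorphism onto $\R^k$, and write a nearby point of $\cE$ as $\bx + \bu + \bw$ with $\bu \in \ker Dh(\bx)$ and $\bw \in \cW$. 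Applying the implicit function theorem to $h(\bx + \bu + \bw) = \bm{0}$ at $(\bu,\bw)=(\bm{0},\bm{0})$ produces a smooth map $\varphi$ from a neighborhood of $\bm{0}$ in $\ker Dh(\bx)$ into $\cW$, satisfying $\varphi(\bm{0}) = \bm{0}$ and $h(\bx + \bu + \varphi(\bu)) = \bm{0}$ for all small $\bu$. Implicit differentiation together with $Dh(\bx)[\bm{v}] = \bm{0}$ yields $D\varphi(\bm{0}) = 0$, since $Dh(\bx)|_\cW$ is injective. The candidate curve $c(t) := \bx + t\bm{v} + \varphi(t\bm{v})$ is then smooth, satisfies $c(0) = \bx$, and has $c'(0) = \bm{v} + D\varphi(\bm{0})[\bm{v}] = \bm{v}$.

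The final technical obstacle is concluding $c(t) \in \cM$, not merely $c(t) \in h^{-1}(\bm{0})$. This is handled by part (a) of Definition~\ref{def:manifold}: shrinking the neighborhood so that $h^{-1}(\bm{0}) \cap U' = \cM \cap U'$, the curve $c$ indeed lies in $\cM$ for small $t$. Everything else is a routine chain-rule and continuity argument.
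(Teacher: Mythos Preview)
The paper does not supply its own proof of this theorem: it is quoted verbatim as Theorem~3.8 of \cite{boumal2023introduction} and used as background without argument. Your proof is correct and is essentially the standard argument one finds in that reference---the open case via straight-line curves, the forward inclusion via the chain rule on $h\circ c$, and the reverse inclusion via the implicit function theorem applied after splitting $\cE = \ker Dh(\bx)\oplus \cW$---so there is nothing to compare against and no gap to flag.
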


The tangent space of $\cM$ at a point $\bx$ is intended to be a reasonable approximation of the manifold locally. 
This idea is seen most clearly from the description of the tangent space in Theorem~\ref{thm:ker_to_tangent_vector} as the kernal of the differential of a local defining function. 

We have new spaces over which we are defining functions, namely embedded submanifolds of vector spaces. 
Thus, it is prudent for us to consider what it means for such functions to be smooth over these manifolds.
This is clarified in the following definition.

\begin{definition}\label{eq:smooth_f_M}[Definition 3.23, \cite{boumal2023introduction}]
Let $\cM$ and $\cM'$ be embedded submanifolds of vector spaces $\cE$ and $\cE'$ respectively.
A map $F:\cM \rightarrow \cM'$ is smooth at $\bx \in \cM$ if there exists a function $\bar{F}: U \rightarrow \cE'$ which is smooth (in the usual sense) on a neighborhood $u$ of $\bx$ in $\cE$ and such that $\bar{F}$ restricted to $\cM \cap U$ is equal to $F$. The function $\bar{F}$ is called a local smooth extension of $F$ at $\bx$. We say $F$ is smooth if it is smooth for all $\bx \in \cM$.   
\end{definition}

The set of all tangent spaces over the manifold can be bundled together to form another manifold known as the tangent bundle. 

\begin{definition}\label{def:tangent_bundle}[Definition 3.35, \cite{boumal2023introduction}]
The tangent bundle of a manifold $\cM$ is the disjoint union of the tangent spaces of $\cM$: 
\begin{equation}\label{eq:TM}
T\cM := \{(\bx,\bm{v}) \; | \; \bx \in \cM, \; \bm{v} \in T_{\bx}\cM \}.
\end{equation}
\end{definition}

Retraction operations on manifolds, to be defined shortly, and mapping elements of the manifold $\cM$ to tangent vectors naturally take place over the tangent bundle. 
A particularly important class of maps are known as vector fields. 
\begin{definition}\label{def:vector_fields}[Definition 3.37, \cite{boumal2023introduction}]
A vector field on a manifold $\cM$ is a map $V:\cM \rightarrow T\cM$ such that $V(\bx) \in T_{\bx}\cM$ for all $\bx \in \cM$. 
If $V$ is a smooth map, we say it is a smooth vector field.
\end{definition}

In order to define distances and gradients on a manifold an inner product must be introduced over the tangent spaces of the manifold. 

\begin{definition}\label{def:inner_product}[Definition 3.43, \cite{boumal2023introduction}]
An inner product  on $T_{\bx}\cM$ is a bilinear, symmetric, positive definite function $\langle \cdot, \cdot \rangle_{\bx} : T_{\bx}\cM \times T_{\bx}\cM \rightarrow \R$. It induces a norm for tangent vectors: $\|\bm{v}\|_{\bx} := \sqrt{\langle \bm{v},\bm{v}\rangle_{\bx}}$. 
\end{definition}

A metric on $\cM$ follows from a particular choice of inner product $\langle \cdot, \cdot \rangle_{\bx}$ for each $\bx \in \cM$. 
As the inner product over the manifold depends on the point on the manifold where the inner product is taken, an important property for an inner product is that it smoothly varies over the manifold. 
Such inner products are known are Riemannian metrics. 
More precisely,
\begin{definition}\label{def:R_metric}[Definition 3.44, \cite{boumal2023introduction}]
A metric $\langle \cdot, \cdot \rangle_{\bx}$ on $\cM$ is a Riemannain metric if it varies smoothly with $\bx$, in the sense that if $V$, $W$, are two smooth vector fields on $\cM$ then the function $\bx \mapsto \langle V(\bx), W(\bx) \rangle_{\bx}$ is smooth from $\cM$ to $\R$. 
\end{definition}

The presence of a Riemannian metric produces a Riemannian manifold and from this metric we can define a notion of distance between connected components, i.e., points which can be connected by a continuous curve on the manifold, on a Riemannian manifold.
\begin{definition}\label{def:R_distance}
Let $\cM$ be a Riemannian manifold. The length of a piecewise smooth curve, see Definition~10.2, \cite{boumal2023introduction}, $c:[a,b] \rightarrow \cM$ is defined as 
\[
L(c) = \int_{a}^{b} \| c'(t)\|_{c(t)} \; dt. 
\]
The Riemannian distance between $\bx,\by \in\cM$ is then $\text{dist}(\bx,\by) = \inf_c L(c)$ where the infimum is taken over all piecewise regular curve segments on $\cM$ connecting $\bx$ and $\by$.
\end{definition}
An interesting by-product of this notation of distance on a Riemannian manifold is the Riemannian distance in tandem with $\cM$ forms a metric space (Theorem 10.3, \cite{boumal2023introduction}).   
Now,  for the case of submanifolds of Euclidean spaces, that is, vector spaces with a defined inner product, e.g., $\cE = \R^n$ with the standard inner $\langle \bx,\by \rangle = \bx^\top \by$, a particular choice of inner product is of great importance.   

\begin{definition}\label{def:R_manifold}[Definition 3.47, \cite{boumal2023introduction}]
Let $\cM$ be an embedded submanifold of a Euclidean space $\cE$. Equipped with the Riemannian metric obtained by restriction of the metric of $\cE$, we call $\cM$ a Riemannian submanifold of $\cE$.  
\end{definition}

Riemannian submanifolds of Euclidean spaces are of immense practical interest as many applications of optimization over manifolds consists of optimizing over such spaces. 
For example, the sphere $S^{n-1} := \{\bx \in \R^n \; | \; \|\bx\| = 1 \}$, the Stiefel manifold $\text{St}_{n,k} :=  \{ \bX \in  \R^{n\times k} \; | \; \bX^\top \bX = \bm{I}_{k\times k}\}$, and low-rank spectrahedron $\{ \bX \in \R^{n\times n} \; | \; \Tr(\bX) = 1, \bX \succeq \bm{0}, \text{rank}(\bX) \leq r \leq n\}$, and any Cartesian product of these are all Riemannain submanifolds of Euclidean spaces.  

In the case of a function $\bar{F}:\cE \rightarrow \cE'$ which is a smooth map between two vector spaces, the differential of $\bar{F}$ at $\bx \in \cE$ is a linear operator from $\cE$ to $\cE'$, denoted $D\bar{F}(\bx):\cE \rightarrow \cE'$, defined as 
\[
D\bar{F}(\bx)[\bm{v}] = \lim_{t\rightarrow 0} \frac{\bar{F}(\bx + t\bm{v}) - \bar{F}(\bx)}{t}.
\]
From a smooth curve $c$ over $\cM$, a smooth curve over $\cM'$ can be formed as the map $t\mapsto F(c(t))$. From this idea, we can define the differential of a map $F$ between two embedded submanifolds of vector spaces. 

\begin{definition}\label{def:differential}[Definition 3.27, \cite{boumal2023introduction}]
Let $\cM$ and $\cM'$ be two embedded submanifolds of vectors spaces $\cE$ and $\cE'$ respectively.
The differential of $F:\cM \rightarrow \cM'$ at $\bx$ is a linear operator $DF(\bx): T_{\bx}\cM \rightarrow T_{F(\bx)}\cM'$ defined by
\[
DF(\bx)[\bm{v}] = \frac{d}{dt} F(c(t))\bigg|_{t=0},
\]
where $c$ is a smooth curve on $\cM$ passing through $\bx$ at $t=0$ with $c'(0) = \bm{v}$. 
\end{definition}
With the definition of a differential and Riemannian metrics presented, we can now introduce the definition of the Riemannian gradient of a smooth function $f:\cM \rightarrow \R$ over a Riemannian manifold. 
\begin{definition}\label{def:gradf}[Definition 3.50, \cite{boumal2023introduction}]
Let $f:\cM \rightarrow \R$ be a smooth function on a Riemannian manifold $\cM$. The Riemannian gradient of $f$ is the vector field $\grad f: \cM \rightarrow T\cM$ uniquely defined by the identities: 
\[
\forall (\bx,\bm{v}) \in T \cM, \;\;\; Df(\bx)[\bm{v}] = \langle \bm{v}, \grad f(\bx) \rangle_{\bx},
\]
where $Df(\bx)$ is the differential of $f$ as defined above and $\langle \cdot, \cdot \rangle_{\bx}$ is the Riemannian metric.
\end{definition}

In the event $\cM$ is a Riemannian submanifold of a Euclidean space, then computing the Riemannian gradient of $f$ is easily accomplished through a smooth extension of $f$. 

\begin{proposition}\label{prop:gradf}[Proposition 3.53, \cite{boumal2023introduction}]
Let $\cM$ be a Riemannian submanifold of $\cE$ endowed with the metric $\langle \cdot, \cdot \rangle$ and let $f:\cM \rightarrow \R$ be a smooth function. The Riemannian gradient of $f$ is given by 
\begin{equation}\label{eq:R_gradf}
\grad f(\bx) = \text{\rm Proj}_{\bx} \left(\grad \bar{f}(\bx)\right),
\end{equation}
where $\bar{f}$ is any smooth extension of $f$ in a neighborhood of $\bx \in \cM$ and $\text{\rm Proj}_{\bx}(\cdot)$ is the projection operator from $\cE$ to $T_{\bx}\cM$, orthogonal with respect to the inner product over $\cE$.  
\end{proposition}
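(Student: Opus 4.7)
The plan is to verify that the candidate $\Proj_{\bx}(\grad \bar{f}(\bx))$ satisfies both defining properties of the Riemannian gradient from Definition~\ref{def:gradf}: it lies in $T_{\bx}\cM$, and it represents the differential $Df(\bx)$ with respect to the Riemannian metric $\langle \cdot,\cdot\rangle_{\bx}$. Since the Riemannian gradient is uniquely determined by these conditions, this suffices.

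First, I would show that $Df(\bx)[\bm{v}] = D\bar{f}(\bx)[\bm{v}]$ for every $\bm{v} \in T_{\bx}\cM$. Using Definition~\ref{def:tangent_vector}, pick a smooth curve $c:\mathcal{I} \to \cM$ with $c(0) = \bx$ and $c'(0) = \bm{v}$. Because $\bar{f}$ is a local smooth extension of $f$ on some neighborhood $U$ of $\bx$ in $\cE$ (Definition~\ref{eq:smooth_f_M}), we have $\bar{f}(c(t)) = f(c(t))$ for all sufficiently small $|t|$. Differentiating this identity at $t=0$ and applying Definition~\ref{def:differential} on both sides gives the required equality.

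Next, I would use the Euclidean inner product characterization of the ambient gradient, namely $D\bar{f}(\bx)[\bm{v}] = \langle \bm{v}, \grad \bar{f}(\bx)\rangle$ for every $\bm{v} \in \cE$, and then orthogonally decompose
\[
\grad \bar{f}(\bx) \;=\; \Proj_{\bx}(\grad \bar{f}(\bx)) + \bm{w},
\]
where $\bm{w}$ lies in the orthogonal complement of $T_{\bx}\cM$ inside $\cE$ with respect to the ambient inner product. For $\bm{v} \in T_{\bx}\cM$ we have $\langle \bm{v}, \bm{w}\rangle = 0$, so
\[
Df(\bx)[\bm{v}] \;=\; D\bar{f}(\bx)[\bm{v}] \;=\; \langle \bm{v}, \Proj_{\bx}(\grad \bar{f}(\bx))\rangle.
\]
By Definition~\ref{def:R_manifold}, the Riemannian metric on $\cM$ is the restriction of the ambient inner product, so the right-hand side equals $\langle \bm{v}, \Proj_{\bx}(\grad \bar{f}(\bx))\rangle_{\bx}$.

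Finally, since $\Proj_{\bx}(\grad \bar{f}(\bx)) \in T_{\bx}\cM$ by construction and since the identity $Df(\bx)[\bm{v}] = \langle \bm{v}, \Proj_{\bx}(\grad \bar{f}(\bx))\rangle_{\bx}$ holds for every $\bm{v} \in T_{\bx}\cM$, the uniqueness clause of Definition~\ref{def:gradf} forces $\grad f(\bx) = \Proj_{\bx}(\grad \bar{f}(\bx))$. The only mildly delicate step is the first one: matching the manifold differential of $f$ with the Euclidean differential of $\bar{f}$ on tangent directions, which must be done through a curve since $Df(\bx)$ is defined intrinsically while $D\bar{f}(\bx)$ is defined extrinsically on all of $\cE$. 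Everything else is a routine application of orthogonality and the inherited-metric definition.
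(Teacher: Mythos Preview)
The paper does not supply its own proof of this proposition; it is quoted verbatim as Proposition~3.53 from \cite{boumal2023introduction} and used as background without argument. Your proof is correct and is essentially the standard argument one finds in that reference: match $Df(\bx)$ with $D\bar{f}(\bx)$ on tangent directions via a curve, express $D\bar{f}(\bx)[\bm{v}]$ through the ambient inner product, drop the normal component by orthogonality, and invoke uniqueness from Definition~\ref{def:gradf}. There is nothing to compare against in this paper, and your write-up stands on its own.
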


Another crucial operation over manifolds are retractions.

\begin{definition}\label{def:retraction}[Definition 3.41, \cite{boumal2023introduction}]
A retraction on $\cM$ is a smooth map $\Retr_{\cM}:T\cM \rightarrow \cM$ with the following properties. For each $\bx \in \cM$, let $\Retr_{\cM,\bx}: T_{\bx} \cM \rightarrow \cM$ be the restriction of $\Retr_{\cM}$ at $\bx$, such that $\Retr_{\cM,\bx}(\bm{v}) = \Retr_{\cM}(\bx,\bm{v})$. Then, 
\begin{enumerate}
\item $\Retr_{\cM,\bx}(\bm{0}) = \bx$ for all $\bx \in \cM$, and 
\item $D\Retr_{\cM,\bx}(\bm{0}): T_{\bx}\cM \rightarrow T_{\bx}\cM$ is the identity map: $D\Retr_{\cM,\bx}(\bm{0})[\bm{v}] = \bm{v}$. 
\end{enumerate}
Equivalently, each curve $c(t) = \Retr_{\cM,\bx}(t\bm{v})$ satisfies $c(0)=\bx$ and $c'(0)=\bm{v}$.
\end{definition}

From an optimization perspective, a retraction is a way of returning to the manifold if one has moved away from the manifold in the ambient space. A very special retraction available on each manifold is known as the Riemannian exponential map whose retraction curves are geodesics.

From classical results on differential equations, we know for all $(\bx,\bm{v}) \in T \cM$ there exists a unique maixmal geodesics on $\cM$, $\gamma_{\bm{v}}:I \rightarrow \cM$ such that $\gamma_{\bm{v}}(0) = \bx$ and $\gamma_{\bm{v}}'(0) = \bm{v}$. The Riemannian exponential mapping is a special retraction which aligns with this unique geodesic. 
\begin{definition}\label{def:exp_map}[Definition 10.16,\cite{boumal2023introduction}]
Consider the following subset of the tangent bundle:
\[
\mathcal{O} = \{(\bx,\bm{v}) \in T \cM \; | \; \gamma_{\bm{v}} \text{ is defined on an interval containing $[0,1]$} \}. 
\]
The exponential map $\text{\rm Exp}:\mathcal{O} \rightarrow \cM$ is defined by
$
\text{\rm Exp}(\bx,\bm{v}) = \gamma_{\bm{v}}(1).  
$
\end{definition}
Thus, the exponential map has $\text{Exp}(\bx,0) = \bx$ and $\frac{d}{dt}\left(\text{Exp}(\bx,t\bm{v})\right)|_{t=0} = \bm{v}$. 
The exponential map is smooth over its domain and is a diffeomorphism over its image over a potentially restricted subspace of its domain. 
The injectivity radius of a manifold captures the size of domain on which the expoential map is smoothly invertible.   
\begin{definition}\label{def:complete}[Definitions 10.19 \& 10.28  in \cite{boumal2023introduction}]
The injectivity radius of a Riemannian manifold $\cM$ at a point $\bx$, denoted by $\text{\rm inj}(\bx)$, is the supremum over radii $r>0$ such that $\text{\rm Exp}(\bx,\cdot)$ is defined and is a diffeomorphism on the open ball
\[
B(\bx,r) = \left\{ \bm{v} \in \mathcal{T}_{\bx}\cM  \; | \; \| \bm{v}\|_{\bx} \leq r \right\}.
\]
The injectivity radius $\text{\rm inj}(\cM)$ of a Riemannian manifold $\cM$ is the infimum of $\text{\rm inj}(\bx)$ over $\bx \in \cM$. 
\end{definition}
The injectivity radius shall become important in the analysis of our algorithm as it shall determine a ball on which we can always ensure a tangent vector exists which moves us between points on the manifold $\cM$ which are sufficiently close together. For compact Riemannian submanifolds, e.g., the Stiefel manifold or products of Stiefel manifolds, the injectivity radius over the manifold is strictly positive. 
Another useful property over compact Riemannian submanifolds is a Lipschitz-type condition over any defined retraction. 
\begin{proposition}\label{prop:retraction_lip}
For a retraction $\Retr_{\cM}:T\cM \rightarrow \cM$ on a compact Riemannian submanifold of $\cE$ there exists $L_1 > 0$ such that
\begin{equation}\label{eq:retraction_bdd}
\| \Retr_{\cM}(\bx,\bm{v}) - \bx \|_{\cM} \leq L_1 \|\bm{v}\|_{\bx}
\end{equation}
for all $\bx \in \cM$ and $\bm{v} \in T_{\bx}\cM$, where $\|\cdot\|_{\cM}$ is the induced norm from the inner product of the embedding space $\cE$.
\end{proposition}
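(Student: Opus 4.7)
The plan is to split the argument by the magnitude of $\bm{v}$ and ultimately reduce everything to a compactness argument. Since $\cM$ is compact in the ambient Euclidean space $\cE$, it has a finite diameter $D := \sup_{\bx,\by \in \cM}\|\bx - \by\|_{\cM}$. Whenever $\|\bm{v}\|_{\bx} \geq 1$, the crude bound $\|\Retr_{\cM}(\bx,\bm{v}) - \bx\|_{\cM} \leq D \leq D\|\bm{v}\|_{\bx}$ holds automatically, so the real content lies in the restricted domain $\{(\bx,\bm{v}) \in T\cM : \|\bm{v}\|_{\bx} \leq 1\}$.

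For the small-$\bm{v}$ regime I would exploit the defining property $\Retr_{\cM,\bx}(\bm{0}) = \bx$ and apply the fundamental theorem of calculus along the smooth curve $t \mapsto \Retr_{\cM,\bx}(t\bm{v})$ on $[0,1]$, yielding
\[
\Retr_{\cM}(\bx,\bm{v}) - \bx \;=\; \int_{0}^{1} D\Retr_{\cM,\bx}(t\bm{v})[\bm{v}]\, dt,
\]
where the differential is interpreted as a linear operator from $T_{\bx}\cM$ into $\cE$; this is well defined because $\Retr_{\cM}$ is smooth on $T\cM$ and therefore admits a smooth local extension to which ordinary Euclidean differentiation applies. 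Taking $\cE$-norms inside the integral and factoring out $\|\bm{v}\|_{\bx}$ reduces the task to exhibiting a uniform bound on the operator norm $\|D\Retr_{\cM,\bx}(\bm{w})\|_{op}$ as $(\bx,\bm{w})$ ranges over the closed unit disc bundle $K := \{(\bx,\bm{w}) \in T\cM : \|\bm{w}\|_{\bx} \leq 1\}$.

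The main obstacle, and where I expect to spend the most care, is verifying that $K$ is compact and that the operator-norm function on $K$ is continuous. Compactness of $K$ follows from compactness of $\cM$ together with closedness of the unit disc inside each tangent fiber; one concrete route is to realize $T\cM$ as a closed subset of $\cE \times \cE$ (using $T_{\bx}\cM \subseteq \cE$) and observe that $K$ is closed and bounded there, hence compact by Heine--Borel. Continuity of the operator norm is inherited from continuity of $(\bx,\bm{w}) \mapsto D\Retr_{\cM,\bx}(\bm{w})$, which in turn follows from smoothness of $\Retr_{\cM}$. The extreme value theorem then delivers a finite $M := \sup_{(\bx,\bm{w}) \in K} \|D\Retr_{\cM,\bx}(\bm{w})\|_{op}$.

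Setting $L_1 := \max\{D,M\}$ and combining the two regimes yields the claimed inequality. The only subtle point throughout is that the tangent spaces $T_{\bx}\cM$ vary with $\bx$, so operator norms must be compared uniformly; working inside the fixed ambient $\cE$, as permitted by the Riemannian submanifold structure, sidesteps this cleanly.
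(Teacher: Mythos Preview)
Your argument is correct and follows the standard compactness route. Note, however, that the paper does not actually supply a proof of this proposition: it merely cites Appendix~B of \cite{boumal2019global} (where the bound appears as an intermediate step) and Proposition~2.7 of \cite{zhang2018cubic} (where it is stated). Your splitting into $\|\bm{v}\|_{\bx}\geq 1$ (diameter bound) and $\|\bm{v}\|_{\bx}\leq 1$ (uniform bound on $\|D\Retr_{\cM,\bx}(\bm{w})\|_{op}$ over the compact unit disc bundle via the extreme value theorem) is precisely the argument given in those references, so you have effectively reconstructed the cited proof. The one point worth making fully explicit is that $M\geq \|D\Retr_{\cM,\bx}(\bm{0})\|_{op}=1$ by the second retraction axiom, so $L_1=\max\{D,M\}$ is indeed strictly positive.
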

This result was proven as an intermediate step of an argument in Appendix B of \cite{boumal2019global} and was concisely stated as Proposition 2.7 in \cite{zhang2018cubic}.
As a note, Proposition~\ref{prop:retraction_lip} holds for any valid retraction, though the constant $L_1$ shall change dependent upon the retraction. 
Lastly, through the use of retractions a natural extension of the traditional gradient Lipschitz condition was proposed by Boumal et al. \cite{boumal2019global}. 

\begin{definition}\label{def:lip_grad_retraction}
The function $f:\cM \rightarrow \R$ has a gradient Lipschitz retraction with constant $L_R \geq 0$ with respect to the retraction $\Retr_{\cM}: T\cM \rightarrow \cM$ if for all $\bx \in \cM$ and $\bm{v} \in T_{\bx}\cM$ we have
\begin{equation}\label{eq:grad_lip_retraction}
f(\Retr_{\cM}(\bx,\bm{v})) \leq f(\bx) + \langle \grad f(\bx), \bm{v} \rangle_{\bx} + \frac{L_R}{2} \|\bm{v}\|_{\bx}^2.  
\end{equation}
\end{definition}
This condition enables many convergence results over $\R^n$ to be extended, relatively easily, to Riemannian manifolds. 
Our analysis makes heavy use of this condition, and, in the case of compact Riemannain submanifolds, Lemma 2.7 of \cite{boumal2019global} shows this condition follows with minor additional assumptions from the classic gradient Lipschitz condition.  

\subsection{Additional Definitions and Notation}
To facilitate our discussion, some additional notation and conventions must be set in order.
For $\bc$ and $\bm{g}$ we denote their component functions as $\bc(\bx,\by) = [c_1(\bx,\by), \hdots, c_p(\bx,\by)]^\top$ and $\bm{g}(\by) = [g_1(\by), \hdots, g_s(\by)]^\top$, where $c_i: \cM \times \R^n \rightarrow \R$ and $g_j:\R^n \rightarrow \R$ for $i=1,\hdots, p$ and $j=1,\hdots,s$. We let $\nabla_y f$ and $\nabla_y c_i$ denote the Euclidean gradients of $f$ and $c_i$ with respect to $\by$. The Jacobian of $\bc$ is denoted  
\[
\nabla_y \bc (\bx, \by) = \bigg[ \nabla_y c_1(\bx,\by) \; | \cdots \; | \; \nabla_y c_p(\bx,\by) \bigg] \in \R^{n \times p}. 
\]
The feasible region of \eqref{eq:main_model} is defined as 
\begin{equation}\label{eq:C}
\cC :=
\left\{ (\bx,\by) \in \cM \times \R^n \; | \; \bc(\bx,\by) = \bm{0},\; \bm{g}(\by) \leq \bm{0}\right\}.
\end{equation}
We further define the following slices of the feasible region. For all $\bx \in \cM$ and $\by \in \R^n$ we define the sets:
\begin{equation}\label{eq:Cy}
\cC_y(\bx) :=
\left\{ \by \in \R^n \; | \; \bc(\bx,\by) = \bm{0}, \; \bm{g}(\by) \leq \bm{0} \right\},
\end{equation}
\begin{equation}\label{eq:Cx}
\cC_x(\by) :=
\left\{ \bx \in \cM \; | \; \bc(\bx,\by) = \bm{0}\right\}.
\end{equation}
The level set of $f$ at the point $(\bar{\bx},\bar{\by}) \in \cC$ is given by
\[
L(\bar{\bx},\bar{\by}) :=
\{ (\bx,\by) \in \cC \; | \; f(\bx,\by) \leq f(\bar{\bx},\bar{\by}) \}.
\]
We define $\cM':= \cM \times \R^n$ and shall often let $\bz$ denote elements of $\cM'$. 
Letting $\bar{z} = (\bar{\bx},\bar{\by})$, we will frequently abbreviate $L(\bar{\bx},\bar{\by})$ as $L(\bar{\bz})$.
The set of active inequality constraints at $(\bx,\by)$ is the set $\cA(\by) := \{ i \;|\; g_i(\by) = 0,\;i=1,\hdots, s\}$.

\subsection{A Constrained Manifold Reformulation}\label{sec:manifold_reform}
To present a clear formulation of optimality conditions for \eqref{eq:main_model}, we recast the model as a nonlinear programming model over a Riemannian manifold. 
Remembering $\cM' = \cM \times \R^n$, we let $i_1: \cM' \rightarrow \cM$ be the projection onto $\cM$, i.e., $i_1(\bx,\by) = \bx$, and $i_2: \cM' \rightarrow \R^n$ be the projection onto $\R^n$, i.e., $i_2(\bx,\by) = \by$. 
Then, with $\bz = (\bx, \by) \in \cM'$, we define the functions 
\[
\barf(\bz) := f(i_1(\bz),i_2(\bz)) = f(\bx,\by),\;\;\; \barc_j(\bz):= c_j(i_1(\bz),i_2(\bz)), \; \text{ and } \; \bar{g}_k(\bz) := g_k(i_1(\bz),i_2(\bz)),
\]
for $j=1,\hdots,p$ and $k=1,\hdots, s$.
We then equivalently rewrite \eqref{eq:main_model} as
\begin{align}\label{eq:manifold_model}
\min&\; \barf(\bz)  \\
\text{s.t.}&\; \bar{\bc}(\bz) = \bm{0},\nonumber \\
&\; \bar{\bm{g}}(\bz) \leq \bm{0}, \nonumber \\
&\; \bz \in \cM'.\nonumber 
\end{align}
This is a nonlinear program over a Riemannian manifold, and programs of this forms have only recently begun to be studied in the literature.
Necessary and sufficient conditions for this problem class were derived in 2014 by Yang~et al.\ \cite{yang2014optimality}. 
Since this paper, other investigations have been undertaken developing theory and algorithms
\cite{bergmann2019intrinsic,liu2020simple,lai2022riemannian,yamakawa2022sequential,schiela2021sqp,obara2022sequential}. 
From these developments, we state the first-order necessary conditions for \eqref{eq:manifold_model}.

\begin{definition}[First-Order KKT Points \cite{yang2014optimality}] \label{def:first_order_necessary_conditions}
The point $\bz^* \in \cM'$ such that $\bar{\bc}(\bz^*)=\bm{0}$ and $\bar{\bm{g}}(\bz^*) \leq \bm{0}$ is a first-order KKT point of \eqref{eq:manifold_model} if there exists $\blambda^* \in \R^p$ and $\bm{\mu}^* \in \R^s_+$ such that 
\begin{align}\label{eqn:FOKKT}
&\grad \barf(\bz^*) + \sum_{i=1}^{p} \lambda_j^* \cdot \grad \barc_i(\bz^*) + \sum_{j=1}^{s}\mu^*_j \cdot \grad \bar{g}_j(\bz^*)  = \bm{0}, \nonumber \\
&\mu_j^*\cdot \bar{g}_j(\bz^*) = 0, \; j=1,\hdots, s. \; 
\end{align}
Similarly, a feasible point $\bz^* \in \cM'$ is an $\epsilon$-approximate first-order KKT point of \eqref{eq:manifold_model} provided there exists $\blambda^* \in \R^p$ and $\bm{\mu}^* \in \R^s_+$ such that 
\begin{align}\label{eqn:FOKKT_approx}
&\bigg\|\grad\barf(\bz^*) + \sum_{i=1}^{p} \lambda_j^* \cdot \grad \barc_i(\bz^*) + \sum_{j=1}^{s}\mu^*_j \cdot \grad\bar{g}_j(\bz^*)\bigg\|_{\bz^*}  \leq  \epsilon,  \\
&\mu_j^* \cdot \bar{g}_j(\bz^*) = 0, \; j=1,\hdots, s.\nonumber  \; 
\end{align}
\end{definition}
Often constraint qualifications are necessary to strength results. 
In this work, we shall assume the linear independence constraint qualification (LICQ) for manifold optimization, which is analogous to the standard property by the same name in Euclidean space. 
\begin{definition}[LICQ \cite{yang2014optimality}]\label{def:LICQ}
The linear independent constraint qualification (LICQ) is said to hold at $\bz \in \cM'$ with $\bar{\bm{c}}(\bz)=\bm{0}$ and $\bar{\bm{g}}(\bz) \leq \bm{0}$ provided $\{\text{grad}\; \bar{c}_i(\bz)\}_{i=1}^{p} \cap \{\text{grad}\;\bar{g}_j(\bz)\}_{j \in \mathcal{A}(\by)}$ forms a  linearly independent set on $\mathcal{T}_{\bz^*}\cM'$. 
\end{definition}

Under the LICQ, we obtain we obtain the fact \eqref{eqn:FOKKT} is a first-order necessary condition.

\begin{proposition}[Theorem 4.1, \cite{yang2014optimality}]\label{prop:first_prop}
If $\bz^*$ is a local minimum of \eqref{eq:manifold_model} and the LICQ property holds at $\bz^*$, then there exists multipliers $\blambda^*\in \R^p$ and $\bm{\mu}^* \in \R^s_+$ such that \eqref{eqn:FOKKT} holds. 
\end{proposition}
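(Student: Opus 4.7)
My plan is to follow the classical Karush--Kuhn--Tucker derivation, but carried out inside the tangent space $T_{\bz^*}\cM'$, which is a finite-dimensional Euclidean space under the induced inner product. The two ingredients I will need are (i) a \emph{feasible curve lemma}, showing that the tangent cone of the feasible set $\cC$ at $\bz^*$ (computed as velocities of smooth feasible curves on $\cM'$) coincides with the linearized cone
\[
\cL := \{\bm{v}\in T_{\bz^*}\cM' \,:\, \langle \grad \barc_i(\bz^*),\bm{v}\rangle_{\bz^*}=0 \text{ for all } i,\; \langle \grad \bar{g}_j(\bz^*),\bm{v}\rangle_{\bz^*}\leq 0 \text{ for } j\in\cA(\by^*)\},
\]
and (ii) \emph{Farkas' lemma} applied inside $T_{\bz^*}\cM'$ to convert a variational inequality into the desired multiplier representation.

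The concrete steps I would carry out are the following. First, around $\bz^*$, pick a local defining function $h\colon U\to\R^k$ for $\cM'$ (Definition~\ref{def:manifold}), so that $T_{\bz^*}\cM'=\ker Dh(\bz^*)$ by Theorem~\ref{thm:ker_to_tangent_vector}. Second, because the Riemannian gradients $\grad \barc_i(\bz^*)$ and $\grad \bar{g}_j(\bz^*)$ are obtained from their Euclidean gradients by orthogonal projection onto $T_{\bz^*}\cM'$ (Proposition~\ref{prop:gradf}), the LICQ condition (Definition~\ref{def:LICQ}) inside the tangent space is equivalent to the statement that the family
\[
\{\nabla h_\ell(\bz^*)\}_{\ell=1}^{k} \,\cup\, \{\nabla \barc_i(\bz^*)\}_{i=1}^{p} \,\cup\, \{\nabla \bar g_j(\bz^*)\}_{j\in\cA(\by^*)}
\]
is linearly independent in $\cE\times\R^n$. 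Third, this Euclidean LICQ together with a standard implicit function theorem argument produces, for every $\bm{v}\in\cL$, a smooth curve $c\colon(-\epsilon,\epsilon)\to\cC$ with $c(0)=\bz^*$ and $c'(0)=\bm{v}$; this is the feasible curve lemma. Fourth, local optimality of $\bz^*$ forces $\frac{d}{dt}\barf(c(t))\big|_{t=0}=\langle \grad\barf(\bz^*),\bm{v}\rangle_{\bz^*}\geq 0$ for every $\bm{v}\in\cL$. Finally, Farkas' lemma applied to the finite collection of gradients inside $T_{\bz^*}\cM'$ converts this variational statement into the existence of $\blambda^*\in\R^p$ and $\bm{\mu}^*\in\R^s_+$ with $\mu_j^*=0$ whenever $j\notin\cA(\by^*)$, yielding both the stationarity identity in \eqref{eqn:FOKKT} and complementary slackness.

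The main obstacle is the feasible curve lemma. On a Riemannian manifold the implicit function theorem must be applied in a chart (or, equivalently, via the local defining function $h$) so that the simultaneous solution of $\bar c(\bz)=\bm 0$, $\bar g_j(\bz)\le 0$ for $j\in\cA(\by^*)$, and $h(\bz)=\bm 0$ reduces to a Euclidean system whose Jacobian has the right surjectivity. Once Euclidean LICQ is in hand (which follows from the manifold LICQ plus the full-row-rank of $Dh(\bz^*)$), this is a routine application of, e.g., Lyusternik's theorem, and the remainder of the argument is purely linear-algebraic inside $T_{\bz^*}\cM'$. An alternative, slightly slicker execution would be to adjoin $h(\bz)=\bm 0$ as additional equality constraints from the outset, invoke the standard Euclidean KKT theorem for the lifted nonlinear program, and then project the resulting stationarity identity onto $T_{\bz^*}\cM'$; the multipliers associated with $h$ vanish upon projection since $\Proj_{\bz^*}$ annihilates $\mathrm{range}\,Dh(\bz^*)^\top$, and the projected versions of the Euclidean gradients are, by Proposition~\ref{prop:gradf}, precisely the Riemannian gradients appearing in \eqref{eqn:FOKKT}.
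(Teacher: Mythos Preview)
The paper does not give its own proof of this proposition; it is stated as a direct citation of Theorem~4.1 in \cite{yang2014optimality} and no argument is supplied. Consequently there is no ``paper's proof'' to compare against.

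Your proposal is correct and is, in fact, essentially the standard way such results are established (and is in the spirit of how \cite{yang2014optimality} proceed). The key observations you make are all valid: (i) since $T_{\bz^*}\cM'=\ker Dh(\bz^*)$ and $\operatorname{rank} Dh(\bz^*)=k$, the ambient gradients $\nabla h_\ell(\bz^*)$ span the normal space, so the manifold LICQ on the projected gradients is equivalent to Euclidean LICQ on the enlarged family $\{\nabla h_\ell\}\cup\{\nabla\barc_i\}\cup\{\nabla\bar g_j\}_{j\in\cA}$; (ii) this Euclidean LICQ lets you invoke the classical feasible-direction/implicit-function argument to identify the tangent cone with the linearized cone $\cL$; (iii) local optimality then gives $\langle\grad\barf(\bz^*),\bm v\rangle_{\bz^*}\ge 0$ for all $\bm v\in\cL$; and (iv) Farkas' lemma in the finite-dimensional inner-product space $T_{\bz^*}\cM'$ yields the multipliers with complementary slackness. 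Your alternative route---adjoin $h(\bz)=\bm 0$ as extra equalities, apply the Euclidean KKT theorem, and project the stationarity identity onto $T_{\bz^*}\cM'$ so that the $h$-multipliers disappear---is equally valid and arguably cleaner. Either execution would constitute a complete proof.
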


In Section~\ref{sec:alg} we present our new algorithm, which from the lens of constrained Riemannian optimization can escape stationary points of \eqref{eq:main_model} and converge to first-order KKT points of the equivalent reformulation \eqref{eq:manifold_model}. 
Since finding exact stationary points and KKT points is in-general an intractable task, we further develop notions of approximate stationarity and KKT points in the upcoming section. 

\section{Approximate Stationary and KKT Points}
From investigating \eqref{eq:main_model} and \eqref{eq:manifold_model}, we recognize there are multiple manners of measuring approximate optimality, 
and our algorithm development takes advantage of these different approaches.
Here we outline different types of approximate solutions and our proposed schemes for measuring them. 

\begin{definition} [Approximate Stationary Points in $\bx$]\label{def:x_fos}
The point $(\bx^*,\by^*)\in \cC$ is a first-order stationary point of \eqref{eq:main_model} with respect to $\bx$ if there exists $\blambda^* \in \R^p$ such that
\begin{equation}\label{eq:x_fos}
\grad f(\bx^*,\by^*) + \sum_{i=1}^{p} \lambda_i^*\cdot \grad c_i(\bx^*,\by^*) = \bm{0}.
\end{equation}
We say $(\bx^*,\by^*) \in \cC$ is an $\epsilon$-approximate first-order stationary point of \eqref{eq:main_model} with respect to $\bx$ if there exists $\blambda^* \in \R^p$ such that
\begin{equation}\label{eq:x_fos_approx}
\bigg\|\grad f(\bx^*,\by^*) + \sum_{i=1}^{p} \lambda_i^*\cdot \grad c_i(\bx^*,\by^*)\bigg\|_{\bx^*} \leq \epsilon.
\end{equation}
\end{definition}

\begin{definition} [Approximate Stationary Points in $\by$]\label{def:y_fos}
The point $(\bx^*,\by^*) \in \cC$ is a first-order stationary point of \eqref{eq:main_model} with respect to $\by$ if there exists $\blambda^* \in \R^p$ and $\bm{\mu}^* \in \R^s_+$ such that
\begin{align}\label{eq:y_fos}
& \nabla_y f(\bx^*,\by^*) + \sum_{i=1}^{p} \lambda_i\cdot \nabla_y c_i(\bx^*,\by^*) + \sum_{j=1}^{s}  \mu_j^*\cdot \nabla g_j(\by^*)= \bm{0}, \\
&\mu_j^* \cdot g_j(\by^*)= 0, \; j=1,\hdots, s. \nonumber  \; 
\end{align}
We say $(\bx^*,\by^*) \in \cC$ is an $\epsilon$-approximate first-order stationary point of \eqref{eq:main_model} with respect to $\by$ provided there exists $\blambda^* \in \R^p$ and $\bm{\mu}^* \in \R^s_+$ such that
\begin{align}\label{eq:y_fos_approx}
&\bigg\|\nabla_y f(\bx^*,\by^*) + \sum_{i=1}^{p} \lambda_i\cdot \nabla_y c_i(\bx^*,\by^*) + \sum_{j=1}^{s}  \mu_j^*\cdot \nabla g_j(\by^*)\bigg\| \leq \epsilon, \\
&\mu_j^* \cdot g_j(\by^*)= 0, \; j=1,\hdots, s. \nonumber  
\end{align}
\end{definition}
Combining these definitions, we obtain our notion of stationary points for \eqref{eq:main_model}.
\begin{definition} [Approximate Stationary Points of \eqref{eq:main_model}]\label{def:fos}
The point $(\bx^*,\by^*) \in \cC$ is a first-order stationary point of \eqref{eq:main_model} if both \eqref{eq:x_fos} and \eqref{eq:y_fos} hold for some set of multipliers; the point $(\bx^*,\by^*) \in \cC$ is an $\epsilon$-approximate first-order stationary point of \eqref{eq:main_model} if both \eqref{eq:x_fos_approx} and \eqref{eq:y_fos_approx} hold. 
\end{definition}
Clearly, it is possible a point could be an approximate stationary point of \eqref{eq:main_model} and fail to be a local minimum. 
So, we consider the stronger constrained Riemannian KKT formulation given in Definition~\ref{def:first_order_necessary_conditions}. 
However, in-order to generate algorithms which provide guarantees, we must be able to measure these conditions; 
we introduce the next result for this purpose. 
\begin{proposition}\label{prop:measure_opt}
Define the following measures: 
\begin{multline}\label{eq:x_measure}
m_{x}(\bx,\by) := \bigg|\; \underset{\bd \in \mathcal{T}_{\bx}\cM}{\min} \{ \langle \grad f(\bx,\by), \bd \rangle_{\bx} \; | \;  \langle \grad\bm{c}_i(\bx,\by), \bd \rangle_{\bx}  =  0, \; i=1,\hdots,p, \; \| \bd \|_{\bx} \leq 1 \}  \bigg|,
\end{multline}
\begin{multline}\label{eq:y_measure}
m_{y}(\bx,\by) :=  \bigg|\; \underset{\bd \in \R^n}{\min} \{ \langle \nabla_y f(\bx,\by), \bd \rangle \; | \;\\ \langle \nabla_y c_i(\bx,\by), \bd \rangle = 0, \; i=1,\hdots,p,\; \langle \nabla_y g_j(\bx,\by), \bd \rangle \leq 0, \; j \in \mathcal{A}(\by),\; \| \bd\|\leq 1 \} \bigg|,   
\end{multline}
\vspace{-0.6in}

\begin{multline}\label{eq:z_measure}
m_{KKT}(\bx,\by) := \bigg|\; \underset{\bd \in \mathcal{T}_{\bz}\cM'}{\min} \{ \langle \grad \barf(\bz), \bd \rangle_{\bz} \; | \;\\  \langle \grad \bar{c}_i(\bz), \bd \rangle_{\bz}  =  0, \; i=1,\hdots,p, \langle \grad \bar{g}_j(\bz), \bd \rangle_{\bz} \leq 0, \; j \in \mathcal{A}(\bz),\; \| \bd \|_{\bz} \leq 1\}  \bigg|.
\end{multline}
Assume the LICQ holds at $(\bx^*,\by^*) \in \cC$. Then, if $\max\left\{m_x(\bx^*,\by^*), m_y(\bx^*,\by^*) \right\} \leq \epsilon$, then $(\bx^*,\by^*)$ is an $\epsilon$-approximate first-order stationary point of \eqref{eq:main_model}, and if $m_{KKT}(\bx^*,\by^*)\leq \epsilon$ then $(\bx^*,\by^*)$ is an $\epsilon$-approximate first-order KKT point of \eqref{eq:main_model}, viewing the problem as the constrained Riemannian optimization problem \eqref{eq:manifold_model}. 
\end{proposition}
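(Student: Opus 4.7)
The plan is to recognize each of $m_x$, $m_y$, and $m_{KKT}$ as the absolute value of the optimum of a convex program over a finite-dimensional inner-product space (respectively $\mathcal{T}_{\bx^*}\cM$, $\R^n$, and $\mathcal{T}_{\bz^*}\cM'$), and to invoke convex duality to convert smallness of the primal value into the existence of multipliers satisfying the approximate KKT-type identities of Definitions~\ref{def:x_fos},~\ref{def:y_fos}, and~\ref{def:first_order_necessary_conditions}.

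For $m_x$, which involves only equality linearizations, I would bypass duality in favor of an orthogonal-projection argument. Under LICQ the family $\{\grad c_i(\bx^*,\by^*)\}_{i=1}^p$ is linearly independent in $\mathcal{T}_{\bx^*}\cM$, so $\grad f(\bx^*,\by^*)$ admits the unique decomposition
\[
\grad f(\bx^*,\by^*) \;=\; -\sum_{i=1}^p \lambda_i^*\,\grad c_i(\bx^*,\by^*) \;+\; \bm{r},
\]
with $\bm{r}$ orthogonal, in the Riemannian metric, to every $\grad c_i(\bx^*,\by^*)$. The feasible set of the $m_x$-program is precisely this orthogonal complement intersected with the closed unit ball of $\mathcal{T}_{\bx^*}\cM$; minimizing $\langle\grad f(\bx^*,\by^*),\cdot\rangle_{\bx^*}$ over it is attained at $\bd^{\ast}=-\bm{r}/\|\bm{r}\|_{\bx^*}$ with value $-\|\bm{r}\|_{\bx^*}$. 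Hence $m_x(\bx^*,\by^*)=\|\bm{r}\|_{\bx^*}=\|\grad f(\bx^*,\by^*)+\sum_i \lambda_i^*\grad c_i(\bx^*,\by^*)\|_{\bx^*}$, and the hypothesis $m_x\le\epsilon$ is exactly \eqref{eq:x_fos_approx}.

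For $m_y$ and $m_{KKT}$, both primal programs minimize a linear functional over a polyhedral cone intersected with the unit ball, so I would apply Lagrangian duality and dualize only the affine constraints. Since the resulting Lagrangian is linear in $\bd$, minimizing it over the unit ball yields the dual function
\[
\phi(\lambda,\mu) \;=\; -\bigg\|\nabla_y f(\bx^*,\by^*) + \sum_{i=1}^p \lambda_i\,\nabla_y c_i(\bx^*,\by^*) + \sum_{j\in\cA(\by^*)}\mu_j\,\nabla g_j(\by^*)\bigg\|,
\]
with an analogous expression on $\mathcal{T}_{\bz^*}\cM'$ for $m_{KKT}$. All dualized constraints are affine and $\bd=\bm{0}$ is primal feasible, so the refined Slater condition for affine constraints holds, strong duality with dual attainment follows from standard finite-dimensional convex duality, and the optimal multipliers $\lambda^*\in\R^p$ and $\mu^*\in\R^{|\cA(\by^*)|}_+$ realize the $\epsilon$-bound in \eqref{eq:y_fos_approx} (respectively \eqref{eqn:FOKKT_approx}); extending $\mu^*$ by zero on $\{1,\dots,s\}\setminus\cA(\by^*)$ then enforces complementarity $\mu_j^*\,g_j(\by^*)=0$ for every $j$.

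The main obstacle, modest as it is, is arguing strong duality cleanly in the Riemannian setting for $m_{KKT}$; but because the computation takes place entirely in the Euclidean tangent space $\mathcal{T}_{\bz^*}\cM'$ endowed with its induced inner product, the classical duality theorem for convex programs with affine constraints applies verbatim. LICQ is used explicitly in the $m_x$ step to make the multipliers $\lambda^*$ well defined through orthogonal decomposition, and it guarantees the dual variables in the $m_y$ and $m_{KKT}$ cases are uniquely determined, consistent with the hypothesis of the proposition.
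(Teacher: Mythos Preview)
Your proposal is correct and follows essentially the same route as the paper: both arguments reduce each measure to the optimal value of a convex program on the relevant tangent space and invoke strong duality to exhibit multipliers realizing the $\epsilon$-bound, then extend $\mu^*$ by zero on inactive indices to recover complementarity. The only differences are cosmetic: for $m_x$ you use an explicit orthogonal decomposition in $\mathcal{T}_{\bx^*}\cM$ where the paper writes down the Lagrangian dual directly (these are the same computation), and for $m_y$ and $m_{KKT}$ you justify strong duality and dual attainment via the refined Slater condition at $\bd=\bm{0}$, whereas the paper appeals to LICQ $\Rightarrow$ MFCQ; your justification is arguably cleaner since the linearized constraints are affine and $\bd=\bm{0}$ is strictly interior to the ball, so Slater applies without further constraint qualifications.
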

\begin{proof}
The proof of this claim is an exercise in duality with the utilization of the LICQ, i.e., Definition~\ref{def:LICQ}. 
Under this assumption, we see from the form of the product manifold $\cM'$ that $\{\text{grad}\; c_i(\bx^*,\by^*)\}_{i=1}^{p}$ forms a linearly independent set over $\mathcal{T}_{\bx^*}\cM$; 
therefore, forming the dual problem defining \eqref{eq:x_measure}, we obtain
\begin{multline}
\underset{\blambda \in \R^p}{\max}\; \underset{\|\bd\|_{\bx^*}\leq 1}{\min}\; \bigg\langle \text{grad}\; f(\bx^*,\by^*)  + \sum_{i=1}^{p}\; \lambda_i\; \text{grad}\; c_i(\bx^*,\by^*), \bd \bigg\rangle_{\bx^*} \\  = \underset{\blambda \in \R^p}{\min}\; \bigg\|  \text{grad}\; f(\bx^*,\by^*)  + \sum_{i=1}^{p}\;\lambda_i\; \text{grad}\; c_i(\bx^*,\by^*) \bigg\|_{\bx^*} = m_x(\bx^*,\by^*), \nonumber 
\end{multline}
where the last equality follows by strong duality. Thus, $m_x(\bx,\by)$ provides an exact measure of the approximate stationarity of $\bx$ per Definition~\ref{def:x_fos}. In a similar fashion, since the LICQ implies the Mangasarian-Fromovitz (MFCQ) over manifolds \cite{bergmann2019intrinsic}, we see the dual problem of the convex program defining \eqref{eq:z_measure} is
\begin{align}
&\underset{\blambda \in \R^p,\; \bm{\mu} \in \R^{|\mathcal{A}(\by^*)|}_+}{\max}\;\; \underset{\|\bd\|_{\bz^*}\leq 1}{\min}\; \bigg\langle \text{grad}\; \bar{f}(\bz^*)  + \sum_{i=1}^{p}\;\lambda_i \; \text{grad}\; \bar{c}_i(\bz^*)  + \sum_{j \in \mathcal{A}(\by^*)} \; \mu_j\; \text{grad}\; \bar{g}_j(\bz^*)   \; , \bd \bigg\rangle_{\bz^*} \nonumber \\  
&= \underset{\blambda \in \R^p,\; \bm{\mu} \in \R^{|\mathcal{A}(\by^*)|}_+}{\min}\; \bigg\| \text{grad}\; \bar{f}(\bz^*)  + \sum_{i=1}^{p}\;\lambda_i \; \text{grad}\; c_i(\bz^*)  + \sum_{j \in \mathcal{A}(\by^*)} \; \mu_j\; \text{grad}\; \bar{g}_j(\bz^*) \bigg\|_{\bz^*}  \nonumber \\
&= m_{KKT}(\bx^*,\by^*), \nonumber 
\end{align}
where as before the last equality follows by strong duality due to the constraint qualification. Note, throughout we let $\bz^*=(\bx^*,\by^*)$. Looking at Definition~\ref{def:first_order_necessary_conditions}, we then see by setting $\mu_j = 0$ for all $j\in \{1,\hdots, s\}\setminus\mathcal{A}(\by^*)$ that $m_{KKT}(\bx^*,\by^*)$ is equivalent to \eqref{eqn:FOKKT_approx}. The argument to prove the equivalence of \eqref{eq:y_measure} and \eqref{eq:y_fos_approx} is similar and we leave it to reader. 
\end{proof}

Proposition~\ref{prop:measure_opt} provides a practical manner of measuring the optimality of a proposed feasible point without directly computing any dual variables.
Now, in order to provide a convergence rate for our proposed method, we require a slightly altered version of the approximate stationary and KKT points. 
This new definition is only required for the $y$-subproblem and the $z$-subproblem due to the inequality constraints and requires a notation of ``almost" active constraints. 

\begin{definition}\label{def:eps_kkt}
Given a set of inequality constraints $g_j(\by) \leq 0$ for $j=1,\hdots, s$, the set of $\delta \geq 0$ almost active constraints is 
$
\mathcal{A}^{\delta}(\bx):= \{ j \in \{1,\hdots, s\} \; | \; g_j(\by) \in [-\delta, 0] \}.
$
Under this definition, we define the following variants of $m_y$ and $m_{\text{KKT}}$: 
\begin{multline}\label{eq:y_delta_measure}
m^{\delta}_{y}(\bx,\by) := \bigg|\; \underset{\bd \in \R^n}{\min} \{ \langle \nabla_y f(\bx,\by), \bd \rangle \; | \;\\ \langle \nabla_y c_i(\bx,\by), \bd \rangle = 0, \; i=1,\hdots,p,\; \langle \nabla_y g_j(\bx,\by), \bd \rangle \leq 0, \; j \in \mathcal{A}^{\delta}(\by),\; \| \bd\|\leq 1 \} \bigg|,   
\end{multline}
\vspace{-0.2in}
\begin{multline}\label{eq:z_delta_measure}
m_{KKT}^{\delta}(\bx,\by):= \bigg|\; \underset{\bd \in \mathcal{T}_{\bz}\cM'}{\min} \{ \langle \grad \barf(\bz), \bd \rangle_{\bz} \; | \;\\  \langle \grad\bar{c}_i(\bz), \bd \rangle_{\bz}  =  0, \; i=1,\hdots,p, \langle \grad \bar{g}_j(\bz), \bd \rangle_{\bz} \leq 0, \; j \in \mathcal{A}^{\delta}(\bz),\; \| \bd \|_{\bz} \leq 1\}  \bigg|.
\end{multline}
We then say a point is an $(\delta,\epsilon)$-approximate stationary point with respect to $\by$ and an $(\delta,\epsilon)$-approximate first-order KKT point if $m_{y}^{\delta}(\bx,\by) \leq \epsilon$ and $m_{KKT}^{\delta}(\bx,\by) \leq \epsilon$ respectively. Note, the original definitions of $m_y$ and $m_{KKT}$ are retrieved if $\delta$ are set to zero.
\end{definition}

There is no fundamental difference between computing \eqref{eq:y_delta_measure} and \eqref{eq:z_delta_measure} as compared to the prior definitions \eqref{eq:y_measure} and \eqref{eq:z_measure}, though it is possible more conditions must be satisfied.
This definition is necessary for us to clearly obtain a convergence rate of our method because this 
definition enables us to determine the stepsize we can take in our algorithm as to not violate our error bound conditions for the inequality constraints. 

\begin{remark}
We shall be computing minimizers of the form represented by \eqref{eq:x_measure}, \eqref{eq:y_measure}, and \eqref{eq:z_measure} in-order to compute directions to decrease the value of the objective function. 
It is crucial to recognize all of these models are convex programs with linear objectives and therefore are tractable subproblems which admit numerous efficient solution procedures maintained in standard optimization solvers. 
\end{remark}

\begin{algorithm}
\caption{{{Staged-BCD for \eqref{eq:main_model} }
}}  
\label{alg:Staged_bcd}
\begin{flushleft} 
  \vspace{-0.1in}
  {\bf Input:}  Initial point $(\bx^0,\by^0)\in \cC$; positive tolerances, $\epsilon_1$, $\epsilon_2$; nonegative tolerances, $\delta_1, \delta_2$; constants, $\alpha, \gamma \in (0,1)$; base step-length, $t_b > 0$ \\
  \vspace{0.05in}
 
  {\bf Steps:}\\ \vspace{0.1in}

{\bf \small 1:} {\bf For:} $k=0,1,\hdots$ {\bf do} \\
{\bf \small 2:} \hspace{0.2in} $t_k = t_b$ \\ \vspace{0.05in}
{\bf \small 2:} \hspace{0.2in} {\bf If} $m^{\delta_1}_y(\bx^k,\by^k) > \epsilon_1$  {\bf do} \\ \vspace{0.05in}
{\bf \small 3:} \hspace{0.5in} Set $\bd_y^k$ to be a minimizer which defines $m^{\delta_1}_y(\bx^k,\by^k)$\\ \vspace{0.05in} 
{\bf \small 4:} \hspace{0.5in} {\bf While } $f(\bx^k, \Pi_{\cC_y}(\by^k + t_k d_y^k;\bxk)) > f(\bx^k,\by^k) - \alpha t_k m^{\delta_1}_y(\bx^k,\by^k)$ {\bf do} \\ \vspace{0.05in}
{\bf \small 5:}\hspace{0.85in} $t_k = \gamma\cdot t_k$  \\ \vspace{0.05in} 
{\bf \small 4:} \hspace{0.5in} {\bf End While } \\ \vspace{0.05in}
{\bf \small 4:} \hspace{0.5in} Set $\bxkp = \bxk$ and $\bykp = \Pi_{\cC_y}(\by^k + t_k d_y^k; \bxk)$.\\ \vspace{0.15in}
{\bf \small 2:} \hspace{0.2in} {\bf ElseIf} $m_x(\bx^k,\by^k) > \epsilon_1$  {\bf do} \\ \vspace{0.05in}
{\bf \small 3:} \hspace{0.5in} Set $\bd_x^k$ to be a minimizer which defines $m_x(\bx^k,\by^k)$\\ \vspace{0.05in} 
{\bf \small 4:} \hspace{0.5in} {\bf While } $f(\Pi_{\cC_x}(\text{Retr}_{\cM}(\bx^k,t_k\bd_x^k);\byk),\byk) > f(\bx^k,\by^k) - \alpha t_k m_x(\bx^k,\by^k)$ {\bf do} \\ \vspace{0.05in}
{\bf \small 5:}\hspace{0.85in} $t_k = \gamma\cdot t_k$  \\ \vspace{0.05in} 
{\bf \small 4:} \hspace{0.5in} {\bf End While } \\ \vspace{0.05in}
{\bf \small 4:} \hspace{0.5in} Set $\bxkp = \Pi_{\cC_x}(\text{Retr}_{\cM}(\bx^k,t_k\bd_x^k);\byk)$ and $\bykp = \by^k$.\\ \vspace{0.15in}
{\bf \small 7:} \hspace{0.2in}{\bf ElseIf} $m_{KKT}^{\delta_2}(\bxk,\byk) > \epsilon_2$ {\bf do} \\ \vspace{0.05in}
{\bf \small 3:} \hspace{0.5in} Set $\bd^k$ to be a minimizer which defines $m_{KKT}^{\delta_2}(\bx^k,\by^k)$\\ \vspace{0.05in} 
{\bf \small 9:} \hspace{0.5in} {\bf While } $f(\Pi_{\cC}(\text{Retr}_{\cM'}(\bz^{k},t_k\bd^{k})) >  f(\bxk,\byk)  -\alpha t_k m_{KKT}^{\delta_2}(\bx^{k},\by^{k}) $ {\bf do} \\ 
{\bf \small 10:} \hspace{0.85in} $t_k = \gamma\cdot t_k$ \\ 
{\bf \small 11:} \hspace{0.5in} {\bf End While} \\ \vspace{0.05in}
{\bf \small 12:} \hspace{0.5in} $(\bx^{k+1}, \by^{k+1}) = \Pi_{\cC}(\Retr_{\cM'}(\bz^{k},t_k\bd^{k}))$ \\ 
{\bf \small 13:} \hspace{0.2in} {\bf Else} \\
{\bf \small 14:} \hspace{0.5in} Return $(\bxk, \byk)$ as solution. \\
{\bf \small 15:} \hspace{0.2in} {\bf End If} \\
{\bf \small 16:} {\bf End For}
  \end{flushleft} 
  \end{algorithm}

\section{Algorithm}\label{sec:alg}
We now present our algorithm for solving the block manifold optimization model \eqref{eq:main_model} and describe how it proceeds to compute approximate first-order KKT points reformulated as constrained Riemannian optimization model \eqref{eq:manifold_model}. 
Our procedure presented in Algorithm~\ref{alg:Staged_bcd} has three different updating phases: a $\by$-update, $\bx$-update, and a joint-update. 

The first updating phase in the algorithm is the $\by$-updating phase. 
We let $\bxk \in \cM$ be fixed and seek to improve the value of the objective function with respect to $\by$ by computing a descent direction and then projecting back onto the feasible region. 
The projection in this step is defined as
\begin{equation}\label{eq:y_projection}
\Pi_{\cC_y}(\by;\bx) \in  \argmin_{\bar{\by} \in \cC_{\by}(\bx)} \; \left\{ \| \bar{\by} - \by\|  \right\}.
\end{equation}
There are a few reasons we do the $y$-update first. In the event $\{\by \in \R^n \;| \; \bc(\bx,\by) = \bm{0}, \; \bm{g}(\by) \leq \bm{0}\}$ is a convex set for any fixed $\bx \in \cM$, then the projection operation is relatively simple because it becomes the projection onto a convex subset of $\R^n$. 
We shall see important examples, e.g., QCQPs, where this occurs. 
Second, there is no explicit reference of a manifold in this projection, which reduces the computational difficulty in most cases. 
Third, given our applications of interest, we have $\cM$ as either the Stiefel manifold or a product of Stiefel manifolds; thus, the dimension of $\cM$ is larger than $\R^n$. 
So, the memory cost of operating over $\by$ can be significantly less than over $\bx$. 
Finally, given the form of Algorithm~\ref{alg:Staged_bcd}, placing the $y$-update first means it will likely be undertaken the most, further amplifying the cost savings. 

The second phase in the algorithm is the $\bx$-updating phase. This phase is similar to the $\by$-update in the sense that $\byk$ is fixed and we seek to decrease $f$ by updating $\bx$ through the computation of a descent direction and performing line-search in that direction with projections back onto the feasible region. The projection in this phase is defined as
\begin{equation}\label{eq:x_projection}
\Pi_{\cC_x}(\bx;\by) \in  \argmin_{\bar{\bx} \in \cC_{\bx}(\by)} \; \left\{ \| \bar{\bx} - \bx \|_{\cM}\right\}.
\end{equation}
This phase shall only be entered if $m_y^*(\bxk,\byk)\leq \epsilon_1$, so it is unlikely to be as frequently visited as the $\by$-update step. 
A discussion about this projection operation shall be held in the coming sections.

The third and final phase of the algorithm updates $\bx$ and $\by$ jointly. 
This phase is only entered when it appears progress cannot be made by further updating only $\bx$ and $\by$ sequentially. 
Similar to the prior two phases, a joint descent direction is computed by treating the original problem as a constrained Riemannian optimization problem over the product manifold $\cM'$. 
In this phase, a point $\bz \in \cM'$ is projected onto both the functional constraints and the manifold and is defined as 
\begin{equation}\label{eq:z_projection}
\Pi_{\cC}(\bx) \in  \argmin_{\bar{\bz} \in \cC} \; \left\{ \|\bar{\bz} - \bz \|_{\cM'}\right\}.
\end{equation}
This projection operations shall be discussed as well in the forthcoming sections.

\subsection{A Feasible Constrained Riemannian Optimization Method}\label{sec:projections}
By design, Algorithm~\ref{alg:Staged_bcd} is a feasible method, i.e., every iterate generated by the procedure satisfies the original constraints.
Other methods such as penalty, augmented Lagrangian, sequential quadratic programming, primal-dual algorithms, and interior-point methods for constrained Riemannian optimization cannot return a feasible solution until the algorithm converges to a solution \cite{zhang2020primal,liu2020simple,lai2022riemannian,yamakawa2022sequential,schiela2021sqp,obara2022sequential,andreani2024}; however, this might never occur or could potentially take a prohibitively long period of time. 
Our algorithm is one of the few feasible constrained Riemannian optimization methods. 
Recently, Weber and Sra \cite{weber2023} developed a Frank-Wolfe method for a constrained Riemannian optimization model, but the constraint set they considered requires the strong conditions of being compact and geodesically convex.
So, our algorithm, as far as we are aware, presents the first feasible constrained Riemannian optimization method for problems without geodesic convexity. 

Developing such an approach comes with natural pros and cons. 
On the positive side, our staged block descent method can be terminated at any time with a feasible solution in-hand. 
%
%
On the negative side, in-order for our method to maintain feasibility, projections onto $\cC_{\bx}$ and $\cC$ must be computed, which could potentially prove difficult. 
The next sections describe some potential methods to approximate optimal projections onto $\cC_{\bx}$ and $\cC$ with provable guarantees for our applications of interest.  

\subsection{Approximate Alternating Projection Approach}
The method of alternating projections has a long history and more recently investigations have been made into dealing with alternating projections onto manifolds \cite{andersson2013alternating,lewis2008alternating,song2020fast}.
Our algorithm asks for the projections onto the  sets $\cC_x$ and $\cC$, which can be highly nonconvex; this could pose a problem. Recently, however, Drusvyatskiy and Lewis \cite{drusvyatskiy2018inexact} proposed an approximate alternating projection method for finding feasible points of the constraint set
\begin{equation}\label{eq:Lewis_setting}
\begin{cases}
G(\bx) \leq \bm{0} \\
H(\bx) = \bm{0} \\
\bx \in \mathcal{Q},
\end{cases}
\end{equation}
where $G$ and $H$ are $C^2$-smooth and $\mathcal{Q}$ is a closed subset of Euclidean space.
In the case of attempting to project onto $\cC_x$ or $\cC$, our constraint sets of interest are of the same form as \eqref{eq:Lewis_setting}, with $\mathcal{Q}$ either the Stiefel manifold, a product of Stiefel manifolds, or a product of Stiefel manifolds and $\R^n$, which are all closed subsets of Euclidean spaces. 
Thus, computing unique and exact projections onto $\mathcal{Q}$ for points near $\mathcal{Q}$ are possible; see Proposition 7 in \cite{absil2012projection} and Appendix A in \cite{balashov2022error} for a full description of optimal and unique projections onto the Stiefel manifold.
This then implies optimal and unique projections onto our product manifolds of interest. 
Thus, by \cite{drusvyatskiy2018inexact}, we have the following result: 
\begin{theorem}\label{thm:lewis_projection}[Theorem 1, \cite{drusvyatskiy2018inexact}]
If $\mathcal{Q} = \text{St}_{n,k}, \; \otimes_{i=1}^{r} \text{St}_{n_i,k_i}, \; \text{St}_{n,k}\times \R^n$ or $ \otimes_{i=1}^{r} \text{St}_{n_i,k_i} \times \R^m$ intersects a closed set $M$ separably (see Definition 2, \cite{drusvyatskiy2018inexact}) at a point $\bar{\bx}$ and $\Phi$ is an inexact projection onto $M$ (see Definition 3, \cite{drusvyatskiy2018inexact}) around $\bar{\bx}$, then starting from a point nearby $\bm{z} \in \mathcal{Q}$, the inexact alternating projection scheme 
\begin{equation}\label{eq:alg_proj}
\bm{z} \leftarrow P_{\mathcal{Q}}\left( \Phi(\bm{z})\right)
\end{equation}
converges linearly to a point in the intersection $M \cap \mathcal{Q}$.
\end{theorem}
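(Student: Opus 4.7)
The plan is to reduce the statement to a direct application of Theorem~1 of \cite{drusvyatskiy2018inexact} by verifying, for each choice of $\mathcal{Q}$ listed, the only hypothesis not already assumed in the statement: namely, that $\mathcal{Q}$ is closed and that the metric projection $P_{\mathcal{Q}}$ is well-defined and single-valued on a neighborhood of $\bar{\bx}$. Separable intersection of $M$ and $\mathcal{Q}$ and the inexact-projection property of $\Phi$ are both taken as hypotheses, so they can be plugged into the cited theorem without further work.

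My first step would be to handle the base case $\mathcal{Q}=\text{St}_{n,k}$. The Stiefel manifold is a closed embedded submanifold of $\R^{n\times k}$, and Proposition~7 of \cite{absil2012projection} together with Appendix~A of \cite{balashov2022error} show that the metric projection onto $\text{St}_{n,k}$ is uniquely defined on a tubular neighborhood of the manifold and is given explicitly via the polar factor of the ambient point. In particular, $P_{\text{St}_{n,k}}$ is single-valued and Lipschitz continuous on some open set containing $\bar{\bx}$, which is exactly the regularity required by \cite[Thm.~1]{drusvyatskiy2018inexact}.

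Next I would bootstrap to the three product cases. For the Cartesian product $\otimes_{i=1}^{r} \text{St}_{n_i,k_i}$, I use the standard observation that projection onto a product of closed sets in Euclidean spaces decomposes blockwise, so
\[
P_{\otimes_i \text{St}_{n_i,k_i}}(\bX_1,\ldots,\bX_r) = \bigl(P_{\text{St}_{n_1,k_1}}(\bX_1),\ldots,P_{\text{St}_{n_r,k_r}}(\bX_r)\bigr),
\]
and the right-hand side is single-valued on the product of tubular neighborhoods of the factors, hence on a neighborhood of $\bar{\bx}$. For $\text{St}_{n,k}\times\R^n$ and $\otimes_{i=1}^{r}\text{St}_{n_i,k_i}\times\R^m$, the same blockwise decomposition applies, and the projection onto the Euclidean factor is the identity, which is trivially well-defined everywhere. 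Closedness of each $\mathcal{Q}$ follows because products of closed sets are closed.

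Having verified the regularity of $\mathcal{Q}$, the final step is to invoke \cite[Thm.~1]{drusvyatskiy2018inexact} verbatim: starting from a $\bz\in\mathcal{Q}$ sufficiently close to $\bar{\bx}$, the inexact alternating iteration $\bz\leftarrow P_{\mathcal{Q}}(\Phi(\bz))$ remains in a neighborhood where both $\Phi$ and $P_{\mathcal{Q}}$ retain the regularity used in the cited proof, and converges linearly to a point of $M\cap\mathcal{Q}$. The only genuine piece of new work is the projection-regularity verification; the potential obstacle is making the tubular-neighborhood radii match across the factors, but this is handled by simply taking the minimum of the per-factor radii, which is positive since there are finitely many factors.
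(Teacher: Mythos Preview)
Your proposal is correct and essentially matches the paper's treatment: the paper does not give a standalone proof but simply notes, in the paragraph preceding the theorem, that the listed $\mathcal{Q}$'s are closed subsets of Euclidean space admitting unique nearby projections (citing \cite{absil2012projection} and \cite{balashov2022error} for the Stiefel case and remarking that this extends to products), and then invokes \cite[Theorem~1]{drusvyatskiy2018inexact} directly. Your blockwise-projection argument for the product cases makes explicit what the paper leaves implicit, but the reduction is the same.
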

Furthermore, the authors in \cite{drusvyatskiy2018inexact} propose an inexact projection method onto constraint sets defined by equality and inequality constraints which satisfy the conditions of Theorem~\ref{thm:lewis_projection} under the LICQ. 
Thus, we can approximate the exact projections in the $\bx$-updating phase and the joint-updating phase in Algorithm~\ref{alg:Staged_bcd} with \eqref{eq:alg_proj}. 
A potential critique is that the computed point in the intersection might not be the optimal projection onto the feasible region. 
This is possible; however, for the implementation of our algorithm to be successful only a sufficient improvement of the objective function is necessary and this is certainly obtainable with almost optimal projections.
In Section~\ref{sec:experiments}, we shall see that we are able to compute global minimums to nonconvex problems using Algorithm~\ref{alg:Staged_bcd} with inexact projections. 

\subsection{A Simple Penalty Approach to Approximate Projections}
Another method for approximating $\Pi_{\cC_x}$ and $\Pi_{\cC}$ is via a localized penalty approach. 
For example, let $(\hat{\bx},\hat{\by}) \in \cC$ and  $\bd \in \mathcal{E}$, with $\cE$ the Euclidean space in which $\cM$ is embedded. 
Then, we can approximate the projection of $(\Retr_{\cM}(\hat{\bx},t \bd),\hat{\by})$ onto $\cC_x(\hat{\by})$ by solving the manifold optimization model 
\begin{align}\label{eq:simple_projection}
\min&\;{P}(c(\bx,\hat{\by})) \\
\text{s.t.}&\; \bx \in \cM, \nonumber
\end{align}
by initializing procedures to solve \eqref{eq:simple_projection} at $\Retr_{\cM}(\hat{\bx},t \bd)$, where ${P}:\R^p \rightarrow [0,\infty)$ is a penalty function such that ${P}(\bm{0}) = 0$.
Ideally, initializing algorithms to solve \eqref{eq:simple_projection} at points which are nearly feasible, e.g., $(\Retr_{\cM}(\hat{\bx},t \bd),\hat{\by})$, shall not only compute feasible points but points which are nearly optimal projections.
In testing, we see for small values of $t$, feasible solutions can readily be computed which are nearer to $(\Retr_{\cM}(\hat{\bx},t \bd),\hat{\by})$ 
than the original feasible point $(\hat{\bx}, \hat{\by})$.
This does not guarantee optimal projections but this is sufficient to ensure feasibility and general improvement of the objective function. 

\section{Convergence Analysis}\label{sec:analysis}
In this section, we prove under reasonable assumptions Algorithm~\ref{alg:Staged_bcd} converges to an $(\epsilon,\epsilon)$-approximate first-order KKT point, as defined in Definition~\ref{def:eps_kkt}, in $\mathcal{O}(1/\epsilon^2)$ iterations. 
We begin our analysis by defining several constants which depend upon the initialization of Algorithm~\ref{alg:Staged_bcd}. 
For all $(\bx^0,\by^0) \in \cC$ we define the following terms: 
\begin{equation}\label{eq:Gfynew}
\mathcal{G}_{f,y}(\bx^0,\by^0) := \sup \left\{ \| \nabla_y f(\bx, \by + \bd) \| \; | \; (\bx,\by) \in L(\bx^0,\by^0), \; \| \bd \| \leq 1 \right\},
\end{equation}
\begin{equation}\label{eq:Ggy}
\mathcal{G}_{g_i,y}(\bx^0,\by^0) := \sup \left\{ \| \nabla_y g_i(\by + \bd) \| \; | \; (\bx,\by) \in L(\bx^0,\by^0), \; \| \bd \| \leq 1 \right\},
\end{equation}
\begin{equation}\label{eq:Ggmax}
\mathcal{G}_{g,y}(\bx^0,\by^0) :=  \max_{1 \leq i \leq s} \mathcal{G}_{g_i,y}(\bx^0,\by^0), 
\end{equation}
\begin{equation}\label{eq:Hgy}
\mathcal{H}_{g_i,y}(\bx^0,\by^0) := \sup \left\{ \| \nabla_y^2 g_i(\by + \bd) \|_2 \; | \; (\bx,\by) \in L(\bx^0,\by^0), \; \| \bd \| \leq 1 \right\},
\end{equation}
\begin{equation}\label{eq:Hgmax}
\mathcal{H}_{g,y}(\bx^0,\by^0) := \max_{1 \leq i \leq s} \mathcal{H}_{g_i,y}(\bx^0,\by^0), 
\end{equation}
\begin{multline}\label{eq:Gfxnew}
\mathcal{G}_{f,x}(\bx^0,\by^0) := \sup\bigg\{ \| \grad f(\Retr_{\cM}(\bx,\bd),\by) \|_{\Retr_{\cM}(\bx,\bd)} \; \\ \bigg| \; (\bx,\by) \in L(\bx^0,\by^0), \; \|\bd\|_{\bx} \leq 1, \; \bd \in T_{\bx}\cM \bigg\},
\end{multline}
\begin{equation}\label{eq:Gfbarx}
\mathcal{G}_{\bar{f},z}(\bz^0) := \sup\left\{ \| \grad \bar{f}(\Retr_{\cM'}(\bz,\bd)) \|_{\bz} \; | \; \bz \in L(\bz^0), \; \|\bd\|_{\bz} \leq 1, \; \bd \in T_{\bz}\cM' \right\},
\end{equation}
\begin{multline}\label{eq:Gcix}
\mathcal{G}_{c_i,x}(\bx^0,\by^0) := \sup \bigg\{ \text{abs}\left(\frac{d^2}{dt^2}\left(c_i(\Retr_{\cM}(\bx, t\bd),\by)\right)\right)\big|_{t=a} \; \\  \bigg| \; a \in [0,1],\; (\bx,\by) \in L(\bx^0,\by^0),\; \|\bd\|_{\bx} \leq 1, \; \bd \in T_{\bx}\cM \bigg\},
\end{multline}
\begin{equation}\label{eq:Gc}
\mathcal{G}_{c,x}(\bx^0,\by^0) := \sqrt{\mathcal{G}_{c_1,x}(\bx^0,\by^0)^2 + \cdots + \mathcal{G}_{c_p,x}(\bx^0,\by^0)^2}, 
\end{equation}

\begin{multline}\label{eq:Gbarcix}
\mathcal{G}_{\bar{c}_i,z}(\bx^0,\by^0) :=  \sup \bigg\{ \text{abs}\left(\frac{d^2}{dt^2}\left(\bar{c}_i(\Retr_{\cM'}(\bz, t\bd)\right)\right)\big|_{t=a} \; \\  \bigg| \; a \in [0,1],\; \bz = (\bx,\by) \in L(\bx^0,\by^0),\; \|\bd\|_{\bz} \leq 1, \; \bd \in T_{\bz}\cM' \bigg\},
\end{multline}
\begin{equation}\label{eq:Gbarc}
\mathcal{G}_{\bar{c},z}(\bx^0,\by^0) := \sqrt{\mathcal{G}_{\bar{c}_1,x}(\bx^0,\by^0)^2 + \cdots + \mathcal{G}_{\bar{c}_p,x}(\bx^0,\by^0)^2},
\end{equation}

\begin{equation}\label{eq:Hci}
\mathcal{H}_{c_i,y}(\bx^0,\by^0) :=  \sup\left\{ \| \nabla_y^2 c_i(\bx, \by+\bd)\|_2 \; | \; (\bx, \by) \in L(\bx^0,\by^0),\; \|\bd\| \leq 1 \right\},
\end{equation}
\begin{equation}\label{eq:Hc}
\mathcal{H}_{c,y}(\bx^0,\by^0) :=  \frac{1}{2}\sqrt{ \mathcal{H}_{c_1,y}(\bx^0,\by^0)^2 + \cdots + \mathcal{H}_{c_p,y}(\bx^0,\by^0)^2}.
\end{equation}
Under the assumptions we shall impose, it can be simply argued the defined constants above are all positive and finite. 
Each of these constants focuses on being able to bound the first and second-order derivatives of the objective and constraint functions. 
This is crucial in-order to produce a convergence rate for our algorithm. 
Note, we shall almost always drop the dependence on $(\bx^0,\by^0)$ when we use these constants in the remainder of the paper to reduce notational clutter.  

We now present the assumptions we utilize in our analysis. 
The assumptions can be divided into four categories: smoothness assumptions on the objective and functional constraints, error-bound conditions on the constraints, constraint qualifications, and manifold conditions.  

\begin{assumption}\label{a:smooth1}
The function $f:\cM \times \R^n \rightarrow \R$ is two-times continuously differentiable and $\bc:\cM \times \R^n \rightarrow \R^p $ and $\bm{g}:\cM \times \R^n \rightarrow \R^s$ are three-times continuously differentiable.
\end{assumption}

\begin{assumption}\label{a:smooth2}
The functions $f:\cM \times \R^n \rightarrow \R$ and $\bc:\cM\times \R^n \rightarrow \R^p$ satisfy the following Lipschitz conditions: 
\vspace{-0.15in}
\begin{enumerate}
\item There exists $L_y \geq 0$ such that for all $(\bx,\by^1)$ and $(\bx,\by^2)$ in $\cM \times \R^n$
\begin{equation}\label{eq:f_grad_y}
\| \nabla_y f(\bx,\by^1) - \nabla_y f(\bx,\by^2) \| \leq L_y \| \by^1 - \by^2\|.
\end{equation}
\item $f(\cdot,\by)$ has a gradient Lipschitz retraction with respect to both the exponential map over $\cM$ and the retraction operator utilized in Algorithm~\ref{alg:Staged_bcd} with constant $L_x \geq 1$ for all $\by \in \R^n$.
\item There exists $L_{xy} \geq 0$ such that for all $(\bx^1,\by)$ and $(\bx^2,\by)$ in $\cM \times \R^n$ 
\begin{equation}\label{eq:f_grad_xy}
\| \nabla_y f(\bx^1,\by) - \nabla_y f(\bx^2,\by) \| \leq L_{xy} \| \bx^1 - \bx^2\|_{\cM}.
\end{equation}
\item There exists $L_c \geq 0$ such that for all $(\bx^1,\by)$ and $(\bx^2,\by)$ in $\cM \times \R^n$ we have 
\begin{equation}\label{eq:c_lipgrad}
\| \nabla_y \bc(\bx^1,\by) - \nabla_y \bc(\bx^2, \by) \|_2 \leq L_{c} \| \bx^1 - \bx^2 \|_{\cM}. 
\end{equation}
\end{enumerate}
\end{assumption}

\begin{assumption}\label{a:LICQ}
The LICQ holds for all iterates generated by Algorithm~\ref{alg:Staged_bcd} in the Euclidean and Riemannian senses for $\bc$ and $\bm{g}$ and $\bar{\bc}$ and $\bar{\bm{g}}$ respectively. 
\end{assumption}

\begin{assumption}\label{a:error_bound_conditions}
There exists constants $\delta_x, \delta_y$, and $\delta_z > 0$ and $\tau_x, \tau_y$, and $\tau_z > 0$ such that
\vspace{-0.15in}
\begin{enumerate}
\item For all $\by \in \R^n$ such that $\text{\rm dist}(\by, \cC_y(\bx)) \leq \delta_y$, independent of $\bx\in \cM$, it follows 
\begin{equation}\label{eq:error_y}
\text{\rm dist}(\by, \cC_y(\bx)) \leq \tau_y \left(\| \bc(\bx,\by)\| + \max(\bm{g}(\by),0) \right),
\end{equation}
where $\text{\rm dist}(\by, \cC_y(\bx)):= \underset{ \hat{\by} \in \cC_y(\bx)}{\inf} \| \hat{\by} - \by\|$ and $\max(\bm{g}(\by),0):= \max(g_1(\by), \hdots, g_s(\by),0)$.

\item For all $\bx \in \cM$ such that $\text{\rm dist}(\bx, \cC_x(\by)) \leq \delta_x$, independent of $\by\in \R^n$, it follows 
\begin{equation}\label{eq:error_x}
\text{\rm dist}(\bx, \cC_x(\by)) \leq \tau_x \| \bc(\bx,\by)\|, 
\end{equation}
where $\text{\rm dist}(\bx, \cC_x(\by)) := \inf \left\{ \| \bar{\bx} - \bx \|_{\cM} \; | \; \bar{\bx} \in \cC_x(\by)\right\}$.

{\color{black}
\item For all $\bz = (\bx,\by) \in \cM \times \R^n$ such that $\text{\rm dist}((\bx,\by), \cC) \leq \delta_z$, we have that
\begin{equation}\label{eq:error_z}
\text{\rm dist}((\bx,\by), \cC) \leq \tau_z\left(\| \bc(\bx,\by)\| + \max(\bm{g}(\by),0) \right),
\end{equation}
}
where $\text{\rm dist}((\bx,\by), \cC) := \inf \left\{ \| \bar{\bx} - \bx \|_{\cM} + \| \bar{\by} - \by\| \; | \; (\bar{\bx}, \bar{\by}) \in \cC\right\}$.
\end{enumerate}
\end{assumption}

\begin{assumption}\label{a:manifold}
The injectivity radius of $\cM$ is strictly positive and Proposition~\ref{prop:retraction_lip} holds for the exponential mapping over $\cM$ and the retraction operation utilized in Algorithm~\ref{alg:Staged_bcd}.
\end{assumption}

\begin{assumption}\label{a:norm_equivalence}
The Riemannian distance and the norm inherited from the Euclidean space in which $\cM$ is embedded are locally equivalent, i.e., there exist constants $\rho > 0$ and $\Gamma \geq 1$ such that for all $\bx, \by \in \cM$, if $\| \bx - \by \|_{\cM} \leq \rho$, then $\| \bx - \by \|_{\cM} \leq \text{\rm dist}(\bx, \by) \leq \Gamma \| \bx - \by \|_{\cM}$.
\end{assumption}

It is worth noting that these assumptions are fairly standard. The first assumption asks for sufficient differentiability and is readily satisfied by all of the motivating examples. 
The Lipschitz conditions in the second assumption can follow from a simple Lipschitz condition on the original problem before decomposing; see Appendix~\ref{sec:appendix_lip}.
Constraint qualifications are a staple in the analysis of optimization algorithms, and error bound conditions are a common occurrence in constrained problems.
Finally, Assumption~\ref{a:manifold} holds for any compact Riemannian submanifold, for example, the Stiefel manifold, and Assumption~\ref{a:norm_equivalence} is expected due to the locally Euclidean nature of manifolds.

\subsection{Y-Updating Phase}
\begin{lemma}\label{lem:y_improvement}
Let $\delta_1 > 0$. 
If $m_y^{\delta_1}(\bxk,\byk) > \epsilon_1$, then the y-updating phase is entered and shall terminate with
\begin{equation}\label{eq:y_improve}
f(\bxkp,\bykp) \leq f(\bxk,\byk) - t_{\text{low},y}\cdot \alpha_1 \cdot \epsilon_1
\end{equation}
where 
\begin{equation}\label{eq:ty_low}
t_{\text{low},y} :=  \gamma \min \left\{ 1, \delta_y, \frac{\delta_1}{\mathcal{G}_{g,y}}, \frac{2(1-\alpha_1)\epsilon_1}{{L_y} + 2\tau_y \mathcal{G}_{f,y}(\mathcal{H}_{c,y}+\frac{1}{2}\mathcal{H}_{g,y}) + L_y\tau_y^2(\mathcal{H}_{c,y}+\frac{1}{2}\mathcal{H}_{g,y})}\right\}. 
\end{equation}
\end{lemma}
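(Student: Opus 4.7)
The overall strategy is a Taylor/error-bound line-search argument: bound the constraint violation produced by the unprojected trial point $\byk + t_k\bd_y^k$ by $O(t_k^2)$, invoke the error bound Assumption~\ref{a:error_bound_conditions}(1) to convert this into a bound of the same order on $\|\bykp - (\byk + t_k\bd_y^k)\|$, and then use the Lipschitz properties in Assumption~\ref{a:smooth2} together with the linearized decrease guaranteed by the definition of $\bd_y^k$ to verify that the Armijo-type inequality in Algorithm~\ref{alg:Staged_bcd} holds whenever $t_k$ lies below the stated threshold, so that backtracking (with ratio $\gamma$) must stop at some $t_k\geq t_{\mathrm{low},y}$.

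First, I would unpack the search direction. By strong duality and Proposition~\ref{prop:measure_opt}, the minimizer $\bd_y^k$ in \eqref{eq:y_delta_measure} satisfies $\|\bd_y^k\|\leq 1$, $\langle\nabla_y c_i(\bxk,\byk),\bd_y^k\rangle=0$ for all $i$, $\langle\nabla g_j(\byk),\bd_y^k\rangle\leq 0$ for every $j\in\mathcal{A}^{\delta_1}(\byk)$, and $\langle\nabla_y f(\bxk,\byk),\bd_y^k\rangle = -m_y^{\delta_1}(\bxk,\byk)$. Next, applying second-order Taylor expansion and the Hessian bounds \eqref{eq:Hci}--\eqref{eq:Hc} and \eqref{eq:Hgy}--\eqref{eq:Hgmax}, each $|c_i(\bxk,\byk+t_k\bd_y^k)|$ is controlled by $\tfrac{1}{2}t_k^{2}\mathcal{H}_{c_i,y}$, so $\|\bc(\bxk,\byk+t_k\bd_y^k)\|\leq t_k^{2}\mathcal{H}_{c,y}$, and for each active index $j\in\mathcal{A}^{\delta_1}(\byk)$ the two nonpositive first-order contributions give $g_j(\byk+t_k\bd_y^k)\leq \tfrac{1}{2}t_k^{2}\mathcal{H}_{g,y}$. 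For the inactive indices $j\notin\mathcal{A}^{\delta_1}(\byk)$, one has $g_j(\byk)<-\delta_1$, so the Lipschitz bound coming from \eqref{eq:Ggy}--\eqref{eq:Ggmax} forces $g_j(\byk+t_k\bd_y^k)\leq -\delta_1 + t_k\mathcal{G}_{g,y}\leq 0$ whenever $t_k\leq \delta_1/\mathcal{G}_{g,y}$. Thus under $t_k\leq\min\{1,\delta_1/\mathcal{G}_{g,y}\}$ one obtains $\|\bc(\bxk,\byk+t_k\bd_y^k)\|+\max(\bm{g}(\byk+t_k\bd_y^k),0)\leq t_k^{2}\bigl(\mathcal{H}_{c,y}+\tfrac{1}{2}\mathcal{H}_{g,y}\bigr)$.

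I would then apply the error bound \eqref{eq:error_y}: once $t_k\leq\delta_y$ the trial point lies within $\delta_y$ of $\cC_y(\bxk)$ (because $\byk\in\cC_y(\bxk)$ and $\|\bd_y^k\|\leq 1$), so $\|\bykp-(\byk+t_k\bd_y^k)\|\leq \tau_y t_k^{2}(\mathcal{H}_{c,y}+\tfrac{1}{2}\mathcal{H}_{g,y})$. A standard descent-lemma decomposition then gives
\begin{align*}
f(\bxk,\bykp) - f(\bxk,\byk)
&\leq\bigl[f(\bxk,\byk+t_k\bd_y^k)-f(\bxk,\byk)\bigr] + \bigl[f(\bxk,\bykp)-f(\bxk,\byk+t_k\bd_y^k)\bigr]\\
&\leq -t_k m_y^{\delta_1}(\bxk,\byk) + \tfrac{L_y}{2}t_k^{2} + \mathcal{G}_{f,y}\tau_y t_k^{2}\bigl(\mathcal{H}_{c,y}+\tfrac{1}{2}\mathcal{H}_{g,y}\bigr) + \tfrac{L_y}{2}\tau_y^{2}t_k^{4}\bigl(\mathcal{H}_{c,y}+\tfrac{1}{2}\mathcal{H}_{g,y}\bigr)^{2},
\end{align*}
where the first bracket uses the descent lemma in $\by$ plus the identity $\langle\nabla_y f(\bxk,\byk),\bd_y^k\rangle=-m_y^{\delta_1}(\bxk,\byk)$, and the second bracket uses the descent lemma between $\byk+t_k\bd_y^k$ and $\bykp$ together with $\|\nabla_y f(\bxk,\byk+t_k\bd_y^k)\|\leq\mathcal{G}_{f,y}$ from \eqref{eq:Gfynew}. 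Using $t_k\leq 1$ to absorb the quartic term into a quadratic one matches the denominator appearing in \eqref{eq:ty_low}.

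Finally, to finish, I would compare against the Armijo condition $f(\bxk,\bykp)\leq f(\bxk,\byk)-\alpha_1 t_k m_y^{\delta_1}(\bxk,\byk)$. Dividing through by $t_k$ reduces the inequality to $t_k\cdot[\text{quadratic coefficient above}] \leq 2(1-\alpha_1)m_y^{\delta_1}(\bxk,\byk)$, and because $m_y^{\delta_1}(\bxk,\byk)>\epsilon_1$ the threshold is satisfied whenever $t_k$ is at most the fourth term in the min defining $t_{\mathrm{low},y}$. Combining the four upper bounds on $t_k$ used above produces exactly $t_{\mathrm{low},y}/\gamma$, so the backtracking loop terminates at a step size $t_k\geq t_{\mathrm{low},y}$, proving \eqref{eq:y_improve}. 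The main obstacle is the bookkeeping in the infeasibility bound: one must carefully split $\mathcal{A}^{\delta_1}(\byk)$ from its complement and use the $\delta_1$-slack plus Lipschitz bound $\mathcal{G}_{g,y}$ to keep the inactive constraints strictly nonpositive, which is precisely what forces the $\delta_1/\mathcal{G}_{g,y}$ term into $t_{\mathrm{low},y}$; everything else is routine Taylor-plus-descent-lemma estimation.
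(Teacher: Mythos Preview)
Your proposal is correct and follows essentially the same approach as the paper's proof: Taylor-expand $f$ and the constraints at $\byk$, split the inequality constraints into $\delta_1$-active and inactive parts (using $\mathcal{G}_{g,y}$ to control the latter), invoke the error bound \eqref{eq:error_y} to get an $O(t^2)$ projection distance, apply the descent lemma twice, and then read off the Armijo threshold. Your justification for why $t_k\leq\delta_y$ triggers the error-bound hypothesis (via $\byk\in\cC_y(\bxk)$ and $\|\bd_y^k\|\leq 1$) is in fact slightly more explicit than what the paper writes.
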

\begin{proof}
Let $(\bxk,\byk) \in \cC$ and assume $m_y^{\delta_1}(\bxk,\byk) > \epsilon_1$. Let $\bd_y^k$ be a minimizer of the subproblem defining $m^{\delta_1}_y(\bxk,\byk)$. Then by the gradient Lipschitz condition on $f$ with respect to $\by$, 
\begin{align}\label{eq:y_help1}
f(\bxk,\byk + t \bd_y^k) &\leq f(\bxk,\byk) + t \langle \nabla_y f(\bxk,\byk), \bd_y^k\rangle + \frac{L_y}{2} t^2 \nonumber \\
&= f(\bxk,\byk) - t m^{\delta_1}_y(\bxk,\byk) +  \frac{L_y}{2} t^2. 
\end{align}
From the definition of $\bd_y^k$ and the feasibility of $(\bxk,\byk)$ it follows 
\[
\bc(\bxk,\byk+t\bd_y^k) = 
\frac{t^2}{2} \begin{pmatrix} \langle \nabla_y^2 c_1(\bxk,\bar{\by})[\bd_y^k], \bd_y^k \rangle  \\ \vdots \\ \langle \nabla_y^2 c_p(\bxk,\bar{\by})[\bd_y^k], \bd_y^k \rangle  \end{pmatrix}
\]
where  $\bar{\by}$ lies between $\byk$ and $\byk + t \bd_y^k$. 
Thus, 
\begin{align}
\| \bc ( \bxk, \byk + t \bd_y^k) \| &\leq \frac{t^2}{2} \left[ \|\nabla_y^2 c_1 (\bxk,\bar{\by}) \|_2^2 + \cdots 
+ \|\nabla_y^2 c_p (\bxk,\bar{\by}) \|_2^2\right]^{1/2} \nonumber \\
&\overset{\eqref{eq:Hci}}{\leq} \frac{t^2}{2} \left[\mathcal{H}_{c_1,y}^2 + \cdots + \mathcal{H}_{c_p,y}^2 \right]^{1/2} \nonumber \\
&\overset{\eqref{eq:Hc}}{=} t^2 \mathcal{H}_{c,y} . \nonumber 
\end{align}

{\color{black}
We must similarly bound the potential violation of the inequality constraints. 
We begin by dealing with those constraints $i$ such that $g_i(\byk) < -\delta_1$. 
Therefore, for some $\tilde{\by}$ between $\byk$ and $\byk + t \bd_y^k$ 
\[
g_i(\byk + t \bd_y^k) = g_i(\byk) + t \langle \nabla_y g_i(\tilde{\by}),\bd_y^k \rangle  < -\delta_1 + t \mathcal{G}_{g_i,y}.
\]
Thus, if $t \leq \delta_1/\mathcal{G}_{g_i,y}$, then we know the $i$-th constraint shall not be violated. Next, we bound the error of the $\delta_1$-active constraints.  
} Expanding $g_i$ for $i\in \mathcal{A}^{\delta_1}(\byk)$ we have
\begin{align}\label{eq:gy_bound}
g_i(\byk+t\bd_y^k) &= g_i(\byk) + t\langle \nabla_y g_i(\byk), \bd_y^k \rangle + \frac{t^2}{2} \langle \nabla_y^2 g_i(\tilde{\by})[\bd_y^k], \bd_y^k \rangle  \nonumber  \\
&\leq  \frac{t^2}{2} \langle \nabla_y^2 g_i(\tilde{\by})[\bd_y^k], \bd_y^k\rangle \nonumber \\
&\overset{\eqref{eq:Hgy}}{\leq} \frac{t^2}{2} \mathcal{H}_{g_i,y}.  
\end{align}
So, assuming $t \leq \delta_y$ and $t\leq \delta_1/\mathcal{G}_{g,y}$, it then follows by the error bound condition, \eqref{eq:error_y}, that
\begin{align}\label{eq:y_help2}
\text{dist}(\byk + t \bd_y^k, \cC_y(\bxk)) &\leq \tau_y \left(\| \bc(\bxk,\byk + t \bd_y^k) \| + \max\big(\bm{g}(\byk+t \bd_y^k),\bm{0}\big) \right)  \nonumber \\
&\leq \tau_y\left( \mathcal{H}_{c,y} + \frac{1}{2}\mathcal{H}_{g,y}\right) t^2. 
\end{align}
Hence, it follows 
\begin{align}
&f(\bxk, \Pi_{\cC_y(\bxk)}(\byk + t \bd_y^k))  \nonumber \\
&\leq f(\bxk, \byk + t \bd_y^k) + \langle \nabla_y f(\bxk,\byk + t \bd_y^k), \Pi_{\cC_y(\bxk)}(\byk + t \bd_y^k) - (\byk + t \bd_y^k) \rangle + \frac{L_y}{2} \text{dist}(\byk + t \bd_y^k, \cC_y(\bxk))^2 \nonumber \\
&\overset{\eqref{eq:y_help1}}{\leq} f(\bxk,\byk) - t m^{\delta_1}_y(\bxk,\byk) +  \frac{L_y}{2} t^2 + \langle \nabla_y f(\bxk,\byk + t \bd_y^k), \Pi_{\cC_y(\bxk)}(\byk + t \bd_y^k) - (\byk + t \bd_y^k) \rangle \nonumber \\
&\hspace{2in}+ \frac{L_y}{2} \text{dist}(\byk + t \bd_y^k, \cC_y(\bxk))^2 \nonumber \\
&\overset{\eqref{eq:y_help2}}{\leq} f(\bxk,\byk) - t m^{\delta_1}_y(\bxk,\byk) +  \frac{L_y}{2} t^2 + \langle \nabla_y f(\bxk,\byk + t \bd_y^k), \Pi_{\cC_y(\bxk)}(\byk + t \bd_y^k) - (\byk + t \bd_y^k) \rangle \nonumber \\
&\hspace{2in}+ \frac{1}{2}L_y \tau_y^2(\mathcal{H}_{c,y}+\frac{1}{2}\mathcal{H}_{g,y})^2 t^4 \nonumber \\ 
&\overset{\eqref{eq:Gfynew}}{\leq} f(\bxk,\byk) - t m^{\delta_1}_y(\bxk,\byk) +  \frac{L_y}{2} t^2 + \mathcal{G}_{f,y}\text{dist}(\byk + t \bd_y^k, \cC_y(\bxk)) + \frac{1}{2}L_y \tau_y^2(\mathcal{H}_{c,y}+\frac{1}{2}\mathcal{H}_{g,y})^2 t^4 \nonumber \\ 
&\overset{\eqref{eq:y_help2}}{\leq} f(\bxk,\byk) - t m^{\delta_1}_y(\bxk,\byk) +  \frac{L_y}{2} t^2 + \mathcal{G}_{f,y}\tau_y(\mathcal{H}_{c,y}+\frac{1}{2}\mathcal{H}_{g,y})t^2 + \frac{1}{2}L_y \tau_y^2(\mathcal{H}_{c,y}+\frac{1}{2}\mathcal{H}_{g,y})^2 t^4. \nonumber 
\end{align}
Thus, assuming $t\leq 1$ it follows 
\begin{align}\label{eq:y_help3}
&f(\bxk, \Pi_{\cC_y(\bxk)}(\byk + t \bd_y^k)) \nonumber \\ 
&\hspace{0.1in}\leq f(\bxk,\byk) - m^{\delta_1}_y(\bxk,\byk) t + \left( \frac{L_y}{2} + \tau_y \mathcal{G}_{f,y}(\mathcal{H}_{c,y}+\frac{1}{2}\mathcal{H}_{g,y}) + \frac{1}{2}L_y\tau_y^2(\mathcal{H}_{c,y}+\frac{1}{2}\mathcal{H}_{g,y})^2 \right)t^2.
\end{align}
Therefore, a sufficient decrease shall be obtained provided 
\[
t \leq \min \left\{ 1, \delta_y, \frac{\delta_1}{\mathcal{G}_{g,y}}, \frac{(1-\alpha_1) m^{\delta_1}_y(\bxk,\byk)}{\frac{L_y}{2} + \tau_y \mathcal{G}_{f,y}(\mathcal{H}_{c,y}+\frac{1}{2}\mathcal{H}_{g,y}) + \frac{1}{2}L_y\tau_y^2(\mathcal{H}_{c,y}+\frac{1}{2}\mathcal{H}_{g,y})^2}\right\}.
\]
Given the structure of the line-search procedure in the y-updating phase, it follows the computed stepsize $t_k$ shall be lower bounded by
\[
t_{\text{low},y} := \gamma \min \left\{ 1, \delta_y, \frac{\delta_1}{\mathcal{G}_{g,y}}, \frac{2(1-\alpha_1)\epsilon_1}{{L_y} + 2\tau_y \mathcal{G}_{f,y}(\mathcal{H}_{c,y}+\frac{1}{2}\mathcal{H}_{g,y}) + L_y\tau_y^2(\mathcal{H}_{c,y}+\frac{1}{2}\mathcal{H}_{g,y})^2}\right\}, 
\]
and by combining this lower bound with the sufficient decrease inequality we obtain our result.  
\end{proof}

It is worth remarking the bound $t_{\text{low},y}$ is very pessimistic and follows from the worst possible scenario which might occur given the bounds on the norms of the Hessians and gradients for the objective and constraint functions. 

\subsection{X-Updating Phase}
For our analysis of the x-updating phase, we begin by proving a useful lemma. 
%
%
%
%
%
%
\begin{lemma}\label{lem:retraction}
Assume $\cM$ is a compact Riemannian manifold with positive injectivity radius $\kappa = \text{\rm inj}(\cM) >0$. 
Then for any two points $\bx, \by \in \cM$ with 
$\text{\rm dist}(\bx,\by) \leq \kappa/2$, 
then there exists $\bd \in \mathcal{T}_{\bx}\cM$ such that $\text{\rm Exp}(\bx,\bd)=\by$ and $\|\bd\|_{\bx} < \kappa$.  
\end{lemma}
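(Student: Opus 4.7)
The plan is to invoke the classical behavior of the Riemannian exponential map inside the injectivity radius, essentially reading off the conclusion from Definition~\ref{def:complete} together with standard facts about length-minimizing geodesics. First I would fix any $\kappa' \in (\kappa/2, \kappa)$ and apply Definition~\ref{def:complete} at the point $\bx$, using that $\text{\rm inj}(\bx) \geq \text{\rm inj}(\cM) = \kappa > \kappa'$, to conclude that $\text{\rm Exp}(\bx,\cdot)$ is a diffeomorphism from the ball $B(\bx,\kappa') = \{\bd \in \mathcal{T}_{\bx}\cM : \|\bd\|_{\bx} \leq \kappa'\}$ onto its image in $\cM$.

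Next I would recall the standard Riemannian fact (following, e.g., Chapter~10 of \cite{boumal2023introduction}) that inside the injectivity radius, geodesics emanating from $\bx$ are globally length-minimizing, so that $\text{\rm dist}(\bx,\text{\rm Exp}(\bx,\bd)) = \|\bd\|_{\bx}$ whenever $\|\bd\|_{\bx} \leq \kappa'$. Combined with the diffeomorphism property from the previous step, this identifies the image $\text{\rm Exp}(\bx, B(\bx,\kappa'))$ with the closed metric ball $\{\bz \in \cM : \text{\rm dist}(\bx,\bz) \leq \kappa'\}$, and the restriction of $\text{\rm Exp}(\bx,\cdot)$ admits a smooth inverse (the Riemannian logarithm at $\bx$) on this set.

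Finally, since by hypothesis $\text{\rm dist}(\bx,\by) \leq \kappa/2 < \kappa'$, the point $\by$ lies in the image of $B(\bx,\kappa')$ under $\text{\rm Exp}(\bx,\cdot)$. Thus there exists a unique $\bd \in \mathcal{T}_{\bx}\cM$ with $\text{\rm Exp}(\bx,\bd) = \by$, and by the distance-preserving property noted above,
\begin{equation*}
\|\bd\|_{\bx} \;=\; \text{\rm dist}(\bx,\by) \;\leq\; \kappa/2 \;<\; \kappa,
\end{equation*}
which delivers the desired conclusion. There is no real obstacle here beyond citing the classical statement correctly; the only thing to be careful about is that the injectivity radius is defined in Definition~\ref{def:complete} with a closed ball, so one needs the intermediate radius $\kappa'$ (or an appeal to continuity) to accommodate the strict inequality $\|\bd\|_{\bx} < \kappa$ asserted in the lemma. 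Compactness of $\cM$ enters only implicitly, ensuring that $\kappa > 0$ and that the exponential map is globally defined so the preceding facts are applicable.
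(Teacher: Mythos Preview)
Your proposal is correct and follows essentially the same approach as the paper: both arguments rest on the fact (Proposition~10.22 in \cite{boumal2023introduction}) that $\text{\rm dist}(\bx,\text{\rm Exp}(\bx,\bm{v})) = \|\bm{v}\|_{\bx}$ whenever $\|\bm{v}\|_{\bx}$ is below the injectivity radius, and then read off that the metric ball of radius $\kappa/2$ lies in the image of $\text{\rm Exp}(\bx,\cdot)$ restricted to a tangent ball of radius less than $\kappa$. Your version is arguably cleaner---the paper routes through an auxiliary function $h(\bx) = \min_{\|\bm{v}\|_{\bx}=\kappa/2}\text{\rm dist}(\bx,\text{\rm Exp}(\bx,\bm{v}))$ before reaching the same conclusion---and your use of an intermediate radius $\kappa'$ handles the boundary case $\text{\rm dist}(\bx,\by)=\kappa/2$ more carefully than the paper's final containment, which is stated only for the open ball.
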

\begin{proof}
Define the function $h:\cM \rightarrow \R$ such that 
$
h(\bx) := \min_{\|\bm{v}\|_{\bx} = \kappa/2, \; \bm{v} \in \mathcal{T}_{\bx}\cM} \; \text{\rm dist}(\bx,\text{Exp}(\bx,\bm{v})).
$
By Proposition 10.22 in \cite{boumal2023introduction}, since $\|\bm{v}\|_{\bx} = \kappa/2 < \text{inj}(\cM)$ it follows $h(\bx) = \| \bm{v}\|_{\bx} = \kappa/2$. Thus, $h$ is a constant function over $\cM$. Therefore, 
$
\min_{\bx \in \cM} h(\bx) = \kappa/2. 
$
Since $\mathcal{U}_{\bx}:= \{\text{Exp}(\bx,\bm{v}) \;|\; \|\bm{v}\|_{\bx} < \kappa, \; \bm{v} \in \mathcal{T}_{\bx}\cM\}$ is a neighborhood of $\bx$, it follows
$
\{ \tilde{\bx} \in \cM \; | \; \text{dist}(\tilde{\bx},\bx) < \kappa/2 \} \subset \mathcal{U}_{\bx}
$
yielding the result. 
\end{proof}

\begin{lemma}\label{lem:x_improvement}
If $m_x(\bxk,\byk) > \epsilon_2$ and the x-updating phase
is entered, then the update shall terminate with
\begin{equation}\label{eq:x_improve}
f(\bxkp,\bykp) \leq f(\bxk,\byk) - t_{\text{low},x}\cdot \alpha_2 \cdot  \epsilon_2
\end{equation}
where 
\begin{equation}\label{eq:x_step2}
t_{\text{low},x} :=  \gamma \min 
\left\{1, \frac{\delta_x}{L_1},\frac{\rho}{L_1}, \sqrt{\frac{\kappa_{\cM}}{\mathcal{G}_{c,x} \Gamma \tau_x}}, \frac{8(1-\alpha_2)\epsilon_2}{L_x \mathcal{G}_{c,x}^2 \Gamma^2 \tau_x^2 + 4(L_x + \mathcal{G}_{f,x} \mathcal{G}_{c,x} \Gamma \tau_x)} \right\}.
\end{equation}
\end{lemma}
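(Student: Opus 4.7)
The plan is to parallel the proof of Lemma~\ref{lem:y_improvement}, but now the added wrinkle is that the update must move along a retraction on $\cM$ and then project back onto $\cC_x(\byk)$ in the ambient norm. I would begin by fixing $\bd_x^k \in \mathcal{T}_{\bxk}\cM$ a minimizer defining $m_x(\bxk,\byk)$, so $\|\bd_x^k\|_{\bxk} \leq 1$ and $\langle \grad c_i(\bxk,\byk),\bd_x^k\rangle_{\bxk} = 0$ for $i=1,\dots,p$. Applying the gradient Lipschitz retraction property of $f(\cdot,\byk)$ (Assumption~\ref{a:smooth2}(2)) along the chosen retraction gives the clean ``downhill'' estimate
\[
f(\Retr_{\cM}(\bxk,t\bd_x^k),\byk) \leq f(\bxk,\byk) - t\,m_x(\bxk,\byk) + \tfrac{L_x}{2} t^2.
\]

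Next I would control the constraint violation at $\Retr_{\cM}(\bxk,t\bd_x^k)$ via Taylor expansion along the retraction curve $t \mapsto c_i(\Retr_{\cM}(\bxk,t\bd_x^k),\byk)$. Because $c_i(\bxk,\byk) = 0$ and the first derivative at $t=0$ is $\langle \grad c_i(\bxk,\byk),\bd_x^k\rangle_{\bxk} = 0$, a second-order expansion combined with the definition \eqref{eq:Gcix}--\eqref{eq:Gc} yields $\|\bc(\Retr_{\cM}(\bxk,t\bd_x^k),\byk)\| \leq \tfrac{t^2}{2}\mathcal{G}_{c,x}$. By Proposition~\ref{prop:retraction_lip} we also have $\|\Retr_{\cM}(\bxk,t\bd_x^k) - \bxk\|_{\cM} \leq L_1 t$, so imposing $t \leq \delta_x/L_1$ brings us inside the regime of the error-bound condition \eqref{eq:error_x}, giving
\[
\mathrm{dist}(\Retr_{\cM}(\bxk,t\bd_x^k),\cC_x(\byk)) \leq \tfrac{\tau_x \mathcal{G}_{c,x}}{2} t^2
\]
in the ambient norm, and this bounds $\|\bar{\bx}-\Retr_{\cM}(\bxk,t\bd_x^k)\|_{\cM}$ for $\bar{\bx} := \Pi_{\cC_x}(\Retr_{\cM}(\bxk,t\bd_x^k);\byk)$.

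The main obstacle is bounding $f(\bar{\bx},\byk) - f(\Retr_{\cM}(\bxk,t\bd_x^k),\byk)$: the retraction Lipschitz inequality is stated in Riemannian terms, but the projection distance is ambient. To bridge this, I would enforce $L_1 t \leq \rho$ so that Assumption~\ref{a:norm_equivalence} converts the ambient bound into a Riemannian one (picking up the factor $\Gamma$), and then force the resulting Riemannian distance $\tfrac{\Gamma\tau_x\mathcal{G}_{c,x}}{2} t^2$ to stay below $\kappa_\cM/2$ so that Lemma~\ref{lem:retraction} supplies a tangent vector $\bd$ at $\Retr_{\cM}(\bxk,t\bd_x^k)$ with $\mathrm{Exp}(\cdot,\bd) = \bar\bx$ and $\|\bd\|$ controlled by the same Riemannian distance. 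This is precisely where the term $\sqrt{\kappa_\cM/(\mathcal{G}_{c,x}\Gamma\tau_x)}$ in $t_{\text{low},x}$ enters. Applying the exponential-map gradient Lipschitz retraction inequality (Assumption~\ref{a:smooth2}(2) for $\text{Exp}$) and bounding $\|\grad f(\Retr_{\cM}(\bxk,t\bd_x^k),\byk)\|$ via $\mathcal{G}_{f,x}$ yields
\[
f(\bar\bx,\byk) \leq f(\Retr_{\cM}(\bxk,t\bd_x^k),\byk) + \mathcal{G}_{f,x}\tfrac{\Gamma\tau_x\mathcal{G}_{c,x}}{2} t^2 + \tfrac{L_x}{2}\tfrac{\Gamma^2\tau_x^2\mathcal{G}_{c,x}^2}{4} t^4.
\]

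Finally, I would combine both inequalities, use $t \leq 1$ to absorb $t^4$ into $t^2$, and collect terms to obtain
\[
f(\bar\bx,\byk) \leq f(\bxk,\byk) - t\, m_x(\bxk,\byk) + \Big(\tfrac{L_x}{2} + \tfrac{\mathcal{G}_{f,x}\Gamma\tau_x\mathcal{G}_{c,x}}{2} + \tfrac{L_x\Gamma^2\tau_x^2\mathcal{G}_{c,x}^2}{8}\Big) t^2.
\]
The Armijo condition $f(\bar\bx,\byk) \leq f(\bxk,\byk) - \alpha_2 t\, m_x(\bxk,\byk)$ is then satisfied whenever $t \leq 8(1-\alpha_2)m_x(\bxk,\byk)/(4L_x + 4\mathcal{G}_{f,x}\mathcal{G}_{c,x}\Gamma\tau_x + L_x\mathcal{G}_{c,x}^2\Gamma^2\tau_x^2)$. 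Together with the three feasibility/reachability thresholds $t \leq 1$, $t \leq \delta_x/L_1$, $t \leq \rho/L_1$, and $t \leq \sqrt{\kappa_\cM/(\mathcal{G}_{c,x}\Gamma\tau_x)}$, the backtracking rule with contraction $\gamma$ guarantees the accepted stepsize satisfies $t_k \geq t_{\text{low},x}$ as in \eqref{eq:x_step2}, and plugging this into the Armijo inequality yields \eqref{eq:x_improve}.
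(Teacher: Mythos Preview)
Your proposal is correct and follows essentially the same route as the paper's proof: the gradient Lipschitz retraction inequality for the step, second-order Taylor bound on the constraints along the retraction curve, invocation of the error bound via $t\le\delta_x/L_1$, conversion of the ambient projection distance to a Riemannian one through Assumption~\ref{a:norm_equivalence} (threshold $t\le\rho/L_1$), use of Lemma~\ref{lem:retraction} at the threshold $t\le\sqrt{\kappa_\cM/(\mathcal{G}_{c,x}\Gamma\tau_x)}$ to realize the projection as an exponential step, and the exponential-map Lipschitz inequality with $\mathcal{G}_{f,x}$ to close the loop. Your final quadratic-in-$t$ bound and the resulting Armijo stepsize threshold match the paper's exactly.
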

\begin{proof}
Let $(\bxk,\byk) \in \cC$ satisfy $m^{\delta_1}_y(\bxk,\byk) \leq \epsilon_1$ and $m_x(\bxk,\byk) > \epsilon_2$ 
and let $\bd_x^k$ be a minimizer defining $m_x(\bxk,\byk)$. By the gradient Lipschitz retraction assumption,
\begin{align}\label{eq:x_help1}
f(\Retr_{\cM}(\bxk, t \bd_x^k),\byk) &\leq f(\bxk,\byk) + t \langle \grad f(\bxk,\byk), \bd_x^k \rangle_{\bxk} + \frac{L_x t^2}{2} \|\bd_x^k\|_{\bxk}^2 \nonumber \\ 
&\leq f(\bxk,\byk) - t m_x(\bxk,\byk) + \frac{L_x t^2}{2} \|\bd_x^k\|_{\bxk}^2. 
\end{align}
For $i=1,\hdots,p$, define the functions $h_i(t) = c_i(\Retr_{\cM}(\bxk,t \bd_x^k),\byk)$. By Taylor's theorem,
\[
h_i(t) = h_i(0) + t\langle \grad c_i(\bxk,\byk), \bd_x^k\rangle_{\bxk} + \frac{t^2}{2}h_i''(a)
\]
for some $a \in (0,1)$. 
Therefore, it follows by the definition of $\bd_x^k$ and \eqref{eq:Gcix} that 
\[
|c_i(\Retr_{\cM}(\bxk,t\bd_x^k),\byk)| \leq \frac{t^2}{2}\mathcal{G}_{c_i,x}
\]
for all $i$,
which implies $\|\bc(\Retr_{\cM}(\bxk, t \bd_x^k), \byk)\| \leq \frac{t^2}{2}\mathcal{G}_{c,x}$. 
Thus, by Assumption~\ref{a:error_bound_conditions} and \eqref{eq:error_x}, for a small enough stepsize $t$, to be specified shortly, the error bound condition applies and
\begin{align}\label{eq:x_help2}
\text{\rm dist}(\Retr_{\cM}(\bxk, t \bd_x^k), \cC_x(\byk)) \leq \tau_x \|\bc(\Retr_{\cM}(\bxk, t \bd_x^k), \byk)\|
\leq  \frac{\tau_x}{2}\mathcal{G}_{c,x}t^2.
\end{align}
Since $\cM$ is either $\text{St}(n_1,m_1)$ or $\text{St}(n_1,m_1) \times \cdots \times \text{St}(n_p,m_p)$, we can precisely specify how small the stepsize needs to be
so that 
the error bound condition applies.
For example, for the Stiefel manifold there exists a global constant $L_1$ such that
\begin{equation}\label{eq:retraction_cst}
\| \Retr_{\cM}(\bxk, t \bd_x^k) - \bxk \|_{\cM} \leq L_1 t \|\bd_x^k\|_{\bxk} \leq L_1 t
\end{equation}
for all $\bxk$ and $\bd_x^k$ for both the polar retraction and the QR retraction \cite{zhang2018cubic}. Therefore, if $t \leq \delta_x/L_1$, then the error bound condition \eqref{eq:error_x} shall hold making \eqref{eq:x_help2} a valid bound.  

Define $\bxk(t):= \Retr_{\cM}(\bxk,t \bd_x^k)$ and its projection onto the constraint set $\bx_p^k(t):=\Pi_{\cC_x(\byk)}\left(\bxk(t)\right)$. 
We now argue for the existence of a direction $\tilde{\bd}(t) \in T_{\bxk(t)}\cM$ such that 
\begin{equation}\label{eq:x_help23}
\bx_p^k(t)
= 
\text{Exp}\left(\bxk(t), \tilde{\bd}(t)\right)
\end{equation}
when $t$ is sufficiently small.
Since $\cM$ is either $\text{St}(n_1,m_1)$ or $\text{St}(n_1,m_1) \times \cdots \times \text{St}(n_p,m_p)$ for some  positive integers $n_i$ and $m_i$, it follows there is a positive global lower bound on the injectivity radius, $\kappa_{\cM}$, over $\cM$.
From \eqref{eq:x_help2}, \eqref{eq:retraction_cst}, and Assumption~\ref{a:norm_equivalence}, if additionally $t \leq \min\{\rho/L_1, \sqrt{\kappa_{\cM}/(\tau_x \Gamma \mathcal{G}_{c,x})}\}$ it follows 
\[
\text{\rm dist}(\bx_p^k(t), \bxk(t)) \leq \kappa_\cM/2
\]
and by Lemma~\ref{lem:retraction} there exists $\tilde{\bd}(t) \in \cT_{\bxk(t)}\cM$ with $\|\tilde{\bd}(t)\|_{\bxk(t)} < \kappa_\cM$ such that \eqref{eq:x_help23} holds. Furthermore, by Proposition 10.22 in \cite{boumal2023introduction}, Assumption~\ref{a:norm_equivalence}, and \eqref{eq:x_help2}, 
\begin{equation}\label{eq:x_help4}
\text{\rm dist}(\bx_p^k(t), \bxk(t)) = \|\tilde{\bd}(t)\|_{\bxk(t)} \leq \frac{\tau_x \Gamma}{2}\mathcal{G}_{c,x}t^2.
\end{equation}
%
%
Thus, by the gradient Lipschitz retraction condition of $f$ with respect to $\bx$
\begin{align}
f(\bx^k_p(t), \byk) &\overset{\eqref{eq:x_help23}}{\leq} f({\bx}^k(t), \byk) + \langle \grad_x f({\bx}^k(t), \byk), \tilde{\bd}(t) \rangle_{{\bx}^k(t)} + \frac{L_x}{2}\| \tilde{\bd}(t) \|^2_{{\bx}^k(t)}  \nonumber  \\
&\overset{\eqref{eq:Gfxnew}}{\leq} f({\bx}^k(t), \byk) + \mathcal{G}_{f,x} \|\tilde{\bd}(t)\|_{{\bx}^k(t)} + \frac{L_x}{2}\| \tilde{\bd}(t) \|^2_{{\bx}^k(t)}  \nonumber \\ 
&\overset{\eqref{eq:x_help4}}{\leq} f({\bx}^k(t), \byk) + \frac{1}{2}\mathcal{G}_{f,x}\mathcal{G}_{c,x}\Gamma\tau_x t^2  + \frac{L_x}{2}\left(\frac{1}{2}\mathcal{G}_{c,x} \Gamma \tau_x t^2\right)^2\nonumber \\ 
&\overset{\eqref{eq:x_help1}}{\leq} f(\bxk, \byk) -t m_x(\bxk,\byk) + \left(\frac{1}{2}L_x + \frac{1}{2} \mathcal{G}_{f,x}\mathcal{G}_{c,x}\Gamma \tau_x\right)t^2
+ \frac{1}{8} L_x \mathcal{G}_{c,x}^2\Gamma^2 \tau_x^2 t^4 \nonumber  \\
&\leq f(\bxk, \byk) -t m_x(\bxk,\byk) + \left(\frac{1}{2}L_x + \frac{1}{2} \mathcal{G}_{f,x}\mathcal{G}_{c,x}\Gamma \tau_x  + \frac{1}{8} L_x \mathcal{G}_{c,x}^2\Gamma^2 \tau_x^2   \right)t^2 \nonumber 
\end{align}
where the last inequality follows assuming $t\leq 1$.
Hence, if
\begin{equation}\label{eq:x_step1}
t \leq \min \left\{1, \frac{\delta_x}{L_1},\frac{\rho}{L_1}, \sqrt{\frac{\kappa_{\cM}}{\mathcal{G}_{c,x} \Gamma \tau_x}}, \frac{8(1-\alpha_2)m_x(\bxk,\byk)}{L_x \mathcal{G}_{c,x}^2 \Gamma^2 \tau_x^2 + 4(L_x + \mathcal{G}_{f,x} \mathcal{G}_{c,x} \Gamma \tau_x)} \right\}
\end{equation}
then the while loop in the x-updating phase shall terminate with a guaranteed descent.
Lastly, given \eqref{eq:x_step1} and the x-updating phase procedure it follows $t_k \geq t_{\text{low},x}$ for all $k\geq 0$ where 
\begin{equation}
t_{\text{low},x}  = \gamma \min \left\{1, \frac{\delta_x}{L_1},\frac{\rho}{L_1}, \sqrt{\frac{\kappa_{\cM}}{\mathcal{G}_{c,x} \Gamma \tau_x}}, \frac{8(1-\alpha_2)\epsilon_2}{L_x \mathcal{G}_{c,x}^2 \Gamma^2 \tau_x^2 + 4(L_x + \mathcal{G}_{f,x} \mathcal{G}_{c,x} \Gamma \tau_x)} \right\}. \nonumber 
\end{equation}
The result then follows as before via similar argumentation. 
\end{proof}

\subsection{Joint-Updating Phase}
We open our analysis of the joint-updating phase of Algorithm~\ref{alg:Staged_bcd} by proving $\barf$, as defined in Section~\ref{sec:manifold_reform}, has a gradient Lipschitz retraction over ${\cM}'$. 

\begin{lemma}\label{lem:f_grad_lip}
Under Assumptions~\ref{a:smooth2} and \ref{a:manifold}, $\barf:\cM' \rightarrow \R$ has a gradient Lipschitz retraction with the retraction over the product manifold, $\Retr_{\cM'}: \cT\cM' \rightarrow \cM'$ defined such that $(\bx, \eta_x) \times (\by, \eta_y) \mapsto (\Retr_{\cM}(\bx,\eta_x), \by + \eta_y)$, with constant $L_R = L_{xy}L_1 
 + \max\left(L_{x}, L_{y} \right)$. 
\end{lemma}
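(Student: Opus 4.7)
The plan is to exploit the product structure: on $\cM' = \cM \times \R^n$ both the retraction and the Riemannian metric split, so $\Retr_{\cM'}(\bz,\eta)=(\Retr_{\cM}(\bx,\eta_x),\by+\eta_y)$, $\grad \barf(\bz) = (\grad_x f(\bx,\by),\nabla_y f(\bx,\by))$, and $\|\eta\|_{\bz}^2 = \|\eta_x\|_{\bx}^2 + \|\eta_y\|^2$. Consequently, $\langle \grad \barf(\bz),\eta\rangle_{\bz} = \langle \grad_x f(\bx,\by),\eta_x\rangle_{\bx} + \langle \nabla_y f(\bx,\by),\eta_y\rangle$. I will state these identifications up front so the rest of the argument is just a two-step telescoping along the retraction.

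Next I would telescope $\barf(\Retr_{\cM'}(\bz,\eta)) - \barf(\bz)$ through the intermediate point $(\Retr_{\cM}(\bx,\eta_x),\by)$. The first leg $(\bx,\by) \to (\Retr_{\cM}(\bx,\eta_x),\by)$ is bounded directly by the gradient Lipschitz retraction of $f(\cdot,\by)$ assumed in Assumption~\ref{a:smooth2}(2), yielding
\[
f(\Retr_{\cM}(\bx,\eta_x),\by) \le f(\bx,\by) + \langle \grad_x f(\bx,\by),\eta_x\rangle_{\bx} + \tfrac{L_x}{2}\|\eta_x\|_{\bx}^2.
\]
The second leg $(\Retr_{\cM}(\bx,\eta_x),\by) \to (\Retr_{\cM}(\bx,\eta_x),\by+\eta_y)$ is purely Euclidean, so the $L_y$-Lipschitzness of $\nabla_y f$ in $\by$ from Assumption~\ref{a:smooth2}(1) gives a descent lemma with leading inner product $\langle \nabla_y f(\Retr_{\cM}(\bx,\eta_x),\by),\eta_y\rangle$ and quadratic remainder $\tfrac{L_y}{2}\|\eta_y\|^2$.

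The key manipulation is swapping the base point of that inner product back to $\bx$. Using the cross-variable Lipschitz bound $L_{xy}$ from Assumption~\ref{a:smooth2}(3) together with Proposition~\ref{prop:retraction_lip}, which under Assumption~\ref{a:manifold} gives $\|\Retr_{\cM}(\bx,\eta_x)-\bx\|_{\cM} \le L_1\|\eta_x\|_{\bx}$, I obtain
\[
\langle \nabla_y f(\Retr_{\cM}(\bx,\eta_x),\by),\eta_y\rangle \le \langle \nabla_y f(\bx,\by),\eta_y\rangle + L_{xy}L_1\|\eta_x\|_{\bx}\|\eta_y\|.
\]
Plugging this in and combining the two legs produces the desired linear term $\langle \grad \barf(\bz),\eta\rangle_{\bz}$ together with a quadratic remainder $\tfrac{L_x}{2}\|\eta_x\|_{\bx}^2 + \tfrac{L_y}{2}\|\eta_y\|^2 + L_{xy}L_1\|\eta_x\|_{\bx}\|\eta_y\|$.

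Finally, I bound the cross term by the arithmetic-geometric mean inequality $\|\eta_x\|_{\bx}\|\eta_y\| \le \tfrac{1}{2}(\|\eta_x\|_{\bx}^2 + \|\eta_y\|^2)$ and consolidate coefficients, yielding an overall quadratic bound by $\tfrac{1}{2}(L_{xy}L_1 + \max(L_x,L_y))\|\eta\|_{\bz}^2$, which is exactly $\tfrac{L_R}{2}\|\eta\|_{\bz}^2$ with $L_R = L_{xy}L_1 + \max(L_x,L_y)$. The only subtlety is bookkeeping the two constants so the $\max$ rather than a sum appears; this is handled by absorbing the $L_{xy}L_1/2$ from the AM-GM split into each of the $\|\eta_x\|$ and $\|\eta_y\|$ coefficients and then majorizing. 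I do not anticipate a genuine obstacle beyond this careful accounting, since all ingredients -- the product-metric decomposition, the gradient Lipschitz retraction in $\bx$, Euclidean descent in $\by$, the cross Lipschitz bound, and the retraction length bound -- are directly provided by the stated assumptions.
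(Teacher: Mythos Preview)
Your proposal is correct and follows essentially the same approach as the paper's proof: both telescope through the intermediate point $(\Retr_{\cM}(\bx,\eta_x),\by)$, apply the gradient Lipschitz retraction assumption in $\bx$ and the Euclidean descent lemma in $\by$, swap the base point of the $\nabla_y f$ inner product using Assumption~\ref{a:smooth2}(3) combined with Proposition~\ref{prop:retraction_lip}, and then absorb the cross term $L_{xy}L_1\|\eta_x\|_{\bx}\|\eta_y\|$ via the AM--GM inequality to produce the constant $L_R = L_{xy}L_1 + \max(L_x,L_y)$. The only cosmetic difference is that the paper presents the two legs of the telescoping in the opposite order (bounding the $\by$-increment first, then the $\bx$-retraction), but the intermediate point and all estimates are identical.
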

\begin{proof}
Let $\bz = (\bx,\by) \in \cM \times \R^n$ and $\eta_z = (\eta_x,\eta_y) \in \cT_{\bx}\cM \times \R^n$. Then, 
\begin{align}\label{eq:barf_grad_retr1}
&\barf(\Retr_{\cM'}(\bz,\eta_z))\nonumber \\
&\hspace{0.3in}= f(\Retr_{\cM}(\bx,\eta_x), \by + \eta_y) \nonumber \\ 
&\hspace{0.3in}\leq f(\Retr_{\cM}(\bx,\eta_x), \by) + \langle \nabla_y f(\Retr_{\cM}(\bx,\eta_x),\by),\eta_y \rangle + \frac{L_{y}}{2} \| \eta_y \|^2 \nonumber \\
&\hspace{0.3in}\leq f(\bx,\by) + \langle \grad f(\bx,\by), \eta_x \rangle_{\bx} + \langle \nabla_y f(\Retr_{\cM}(\bx,\eta_x),\by),\eta_y \rangle + \frac{L_{y}}{2} \| \eta_y \|^2 + \frac{L_{x}}{2}\| \eta_x\|_{\bx}^2 \nonumber \\
&\hspace{0.3in}= f(\bx,\by) + \langle \nabla_y f(\bx,\by), \eta_y \rangle + \langle \nabla_y f(\Retr_{\cM}(\bx,\eta_x),\by)- \nabla_y f(\bx,\by),\eta_y \rangle  \nonumber \\
&\hspace{1in} +  \langle \grad f(\bx,\by), \eta_x \rangle_{\bx}  + \frac{L_{y}}{2} \| \eta_y \|^2 + \frac{L_{x}}{2}\| \eta_x\|_{\bx}^2,
\end{align}
where both inequalities followed by the gradient Lipschitz conditions on $f$. 
Investigating the third term we see 
\begin{align}\label{eq:third_term1}
| \langle \nabla_y f(\Retr_{\cM}(\bx,\eta_x),\by)- \nabla_y f(\bx,\by),\eta_y \rangle | &\leq 
 \| \nabla_y f(\Retr_{\cM}(\bx,\eta_x),\by) - \nabla_y f(\bx,\by)\| \cdot \|\eta_y\| \nonumber \\
 &\overset{\eqref{eq:f_grad_xy}}{\leq} L_{xy} \|\Retr_{\cM}(\bx,\eta_x) - \bx\|_{\cM} \cdot \|\eta_y\| \nonumber \\
 &\overset{\eqref{eq:retraction_cst}}{\leq} L_{xy} L_1 \cdot \|\eta_x\|_{\bx}\cdot \|\eta_y\| \nonumber \\
 &\leq \frac{L_{xy}L_{1}}{2} \cdot \left(\|\eta_x\|_{\bx}^2 + \|\eta_y\|^2 \right),
\end{align}
where the mixed Lipschitz condition for $f$ was utilized to obtained the second inequality and the third followed by Assumption~\ref{a:manifold} and Proposition~\ref{prop:retraction_lip}.
Combining \eqref{eq:barf_grad_retr1} and \eqref{eq:third_term1}, we obtain our result. 
\end{proof}

\begin{remark}
Note, this lemma does depend on the retraction being applied. Due to Assumption~\ref{a:smooth2}, Lemma~\ref{lem:f_grad_lip} applies to both the exponential map and the chosen retraction utilized in Algorithm~\ref{alg:Staged_bcd}.
\end{remark}

We now present the improvement guarantee for the joint-updating phase if this part of the algorithm begins with a point which is not an $(\delta_2,\epsilon_3)$-approximate first-order KKT point of \eqref{eq:main_model}. 
The argument is similar to the analysis of both the $x$ and $y$-updating phases.  
\begin{lemma}\label{lem:joint_improvement}
Let $\delta_2 > 0$.
If $m^{\delta_2}_{KKT}(\bxk,\byk) > \epsilon_3$ and the joint-updating phase
is entered, then the update shall terminate with
\begin{equation}\label{eq:j_improve}
f(\bxkp,\bykp) \leq f(\bxk,\byk) - t_{\text{low},kkt}\cdot \alpha_3 \cdot \epsilon_3
\end{equation}
where 
\begin{multline}
t_{\text{low},\text{kkt}} :=  \gamma \min\bigg\{1, \frac{\delta_z}{L_1+1}, \frac{\rho}{L_1+1},  \sqrt{\frac{\kappa_{\cM}}{\tau_z \Gamma(\mathcal{G}_{\bar{c},z}+ \mathcal{H}_{g,y} )}}, \\ \frac{(1-\alpha_3)\epsilon_3}{ \frac{L_R}{2} + \frac{\tau_z}{2}\Gamma\mathcal{G}_{\bar{f},z}(\mathcal{G}_{\bar{c},z} + \mathcal{H}_{g,y})+ \frac{1}{8}L_R \tau_z^2 \Gamma^2 \left( \mathcal{G}_{\bar{c},z} + \mathcal{H}_{g,y} \right)^2 } \bigg\}.
\end{multline}
\end{lemma}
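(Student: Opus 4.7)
The plan is to mirror the structure of Lemma~\ref{lem:y_improvement} and Lemma~\ref{lem:x_improvement}, but now working over the product manifold $\cM'$ and applying the gradient Lipschitz retraction property of $\barf$ established in Lemma~\ref{lem:f_grad_lip}. Let $\bd^k \in \cT_{\bz^k}\cM'$ be a minimizer defining $m^{\delta_2}_{KKT}(\bx^k,\by^k)$, and define $\bz^k(t) := \Retr_{\cM'}(\bz^k, t\bd^k)$. The first step is to apply Lemma~\ref{lem:f_grad_lip} to obtain the descent inequality
\[
\barf(\bz^k(t)) \le \barf(\bz^k) - t\, m^{\delta_2}_{KKT}(\bx^k,\by^k) + \tfrac{L_R}{2}t^2,
\]
which exploits the fact that $\langle \grad \barf(\bz^k), \bd^k\rangle_{\bz^k} = -m^{\delta_2}_{KKT}(\bz^k)$ and $\|\bd^k\|_{\bz^k} \le 1$.

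The second step is to bound the constraint violations of $\bz^k(t)$. For the equality constraints $\bar{c}_i$, Taylor-expanding $h_i(t) := \bar{c}_i(\Retr_{\cM'}(\bz^k,t\bd^k))$ exactly as in Lemma~\ref{lem:x_improvement} and using the definition of $\bd^k$ together with the bound \eqref{eq:Gbarcix} yields $\|\bar{\bc}(\bz^k(t))\| \le \tfrac{t^2}{2}\mathcal{G}_{\bar{c},z}$. For the inequality constraints, I split on $j \in \mathcal{A}^{\delta_2}(\by^k)$ versus $j \notin \mathcal{A}^{\delta_2}(\by^k)$: the non-almost-active constraints remain satisfied provided $t \le \delta_2/\mathcal{G}_{g,y}$ (argument identical to the corresponding step of Lemma~\ref{lem:y_improvement}), while for the $\delta_2$-active ones the second-order expansion \eqref{eq:gy_bound} gives $g_j(\by^k + t\bd_y^k) \le \tfrac{t^2}{2}\mathcal{H}_{g,y}$. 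Combining, we get
\[
\|\bar{\bc}(\bz^k(t))\| + \max(\bar{\bm g}(\bz^k(t)),0) \le \tfrac{t^2}{2}\bigl(\mathcal{G}_{\bar{c},z} + \mathcal{H}_{g,y}\bigr).
\]
Provided $t$ is small enough that this quantity stays below $\delta_z$ (for which a bound of order $\delta_z/(L_1+1)$ on $t$ is enough via \eqref{eq:retraction_cst} applied componentwise on $\cM'$), the error bound condition \eqref{eq:error_z} applies and yields
\[
\text{dist}(\bz^k(t),\cC) \le \tfrac{\tau_z}{2}(\mathcal{G}_{\bar{c},z} + \mathcal{H}_{g,y})\, t^2.
\]

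The third, and main, obstacle is controlling $\barf$ at the projected point $\bz_p^k(t) := \Pi_{\cC}(\bz^k(t))$ using only the retraction-based Lipschitz property that Lemma~\ref{lem:f_grad_lip} delivers; one cannot simply invoke a Euclidean Taylor expansion because $\bz_p^k(t)$ is on the manifold while $\bz^k(t)$ is too. The remedy is the same device used in the x-updating proof: invoke Lemma~\ref{lem:retraction} to produce $\tilde{\bd}(t) \in \cT_{\bz^k(t)}\cM'$ with $\text{Exp}(\bz^k(t),\tilde{\bd}(t)) = \bz_p^k(t)$ and $\|\tilde{\bd}(t)\|_{\bz^k(t)} \le \tfrac{\tau_z\Gamma}{2}(\mathcal{G}_{\bar{c},z}+\mathcal{H}_{g,y})t^2$, which requires $t$ small enough for the right-hand side to be below $\kappa_\cM/2$, explaining the $\sqrt{\kappa_\cM/(\tau_z\Gamma(\mathcal{G}_{\bar{c},z}+\mathcal{H}_{g,y}))}$ term in $t_{\text{low},\text{kkt}}$. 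Applying the gradient Lipschitz retraction of $\barf$ with respect to the exponential map at $\bz^k(t)$, using $\mathcal{G}_{\bar{f},z}$ to bound $\grad\barf$, gives
\[
\barf(\bz_p^k(t)) \le \barf(\bz^k(t)) + \mathcal{G}_{\bar{f},z}\|\tilde{\bd}(t)\|_{\bz^k(t)} + \tfrac{L_R}{2}\|\tilde{\bd}(t)\|_{\bz^k(t)}^2.
\]

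The final step is bookkeeping: substitute the first-step bound on $\barf(\bz^k(t))$ and the $O(t^2)$ bound on $\|\tilde{\bd}(t)\|_{\bz^k(t)}$, absorb the resulting $t^4$ term under $t \le 1$, and obtain
\[
\barf(\bz_p^k(t)) \le \barf(\bz^k) - t\, m^{\delta_2}_{KKT}(\bz^k) + C\, t^2,
\]
where $C = \tfrac{L_R}{2} + \tfrac{\tau_z\Gamma}{2}\mathcal{G}_{\bar{f},z}(\mathcal{G}_{\bar{c},z}+\mathcal{H}_{g,y}) + \tfrac{1}{8}L_R\tau_z^2\Gamma^2(\mathcal{G}_{\bar{c},z}+\mathcal{H}_{g,y})^2$. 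The sufficient-decrease test in the while loop is then satisfied whenever $t \le (1-\alpha_3)m^{\delta_2}_{KKT}(\bz^k)/C$, and intersecting with all the preceding feasibility-related upper bounds on $t$, backtracking by $\gamma$ guarantees the accepted stepsize is at least $t_{\text{low},\text{kkt}}$. Combining this lower bound with $m^{\delta_2}_{KKT}(\bz^k) > \epsilon_3$ and the Armijo condition yields \eqref{eq:j_improve}. The principal subtlety is ensuring that all auxiliary smallness-of-$t$ conditions (for applying the error bound, for staying within the injectivity radius, and for the $t\le 1$ simplification) can be simultaneously met by the single explicit expression for $t_{\text{low},\text{kkt}}$.
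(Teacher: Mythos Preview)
Your proposal is correct and follows essentially the same approach as the paper's proof: descent via Lemma~\ref{lem:f_grad_lip}, second-order Taylor bounds on the equality and $\delta_2$-active inequality constraint violations, application of the error bound \eqref{eq:error_z}, invocation of Lemma~\ref{lem:retraction} and Assumption~\ref{a:norm_equivalence} to produce the tangent vector $\tilde{\bd}(t)$ with a controlled norm, and then a second application of the gradient Lipschitz retraction at $\bz^k(t)$ to bound $\barf(\bz_p^k(t))$. The bookkeeping and the resulting expression for $t_{\text{low},\text{kkt}}$ match.
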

\begin{proof}
Let $\bm{z}^k = (\bxk,\byk)\in \cC$ satisfy $m^{\delta_1}_y(\bxk,\byk) \leq \epsilon_1$, $m_x(\bxk,\byk) \leq \epsilon_2$, and $m^{\delta_2}_{KKT}(\bxk,\byk) > \epsilon_3$. Let $\bd^k=(\bd_x^k,\bd_y^k) \in \mathcal{T}_{\bxk}\cM \times \R^n$ be a minimizer defining  $m^{\delta_2}_{KKT}(\bxk,\byk)$.
By  Lemma~\ref{lem:f_grad_lip}, we know $\barf$ has a gradient Lipschitz retraction. So, using the definition of the retraction on the product manifold $\cM'$ we have
\begin{align}\label{eq:f_improve}
f(\Retr_{\cM}(\bxk, t \bd_x^{k}), \byk + t \bd_y^{k}) & \leq f(\bxk,\byk) + t \langle \grad \barf(\bz^{k}), \bd^{k} \rangle_{\bz^{k}} + \frac{L_R t^2}{2} \| \bd^{k} \|_{\bz^{k}}^2 \nonumber \\ 
&\leq f(\bx^k,\by^k)  - t\cdot m^{\delta_2}_{KKT}(\bx^k,\by^k) + \frac{1}{2}L_R t^2.
\end{align}
This demonstrates a first-order improvement on the value of the objective function can be obtained. 
Next, we demonstrate only a second-order violation occurs in the constraints.  
Similar to the x-updating phase, for $i=1,\hdots,p$, define the functions $\bar{h}_i(t) = \bar{c}_i(\Retr_{\cM'}(\bz^k,t \bd^k))$. By Taylor's theorem,
\[
\bar{h}_i(t) = \bar{h}_i(0) + t\langle \grad \bar{c}_i(\bz^k), \bd^k\rangle_{\bz^k} + \frac{t^2}{2}\bar{h}_i''(a)
\]
for some $a \in (0,1)$. 
Therefore, it follows by the definition of $\bd^k$ and \eqref{eq:Gbarcix} that 
\[
|\bar{c}_i(\Retr_{\cM'}(\bz^k,t\bd^k))| \leq \frac{t^2}{2}\mathcal{G}_{\bar{c}_i,z} \text{ which implies } \|\bc(\Retr_{\cM}(\bxk, t \bd_x^k), \byk + t\bd_y^k)\| \leq \frac{t^2}{2}\mathcal{G}_{\bar{c},z}.
\]
Since the inequality constraints only depend upon $\by$, the prior argument to prove \eqref{eq:gy_bound} stands provided $t \leq \delta_2/\mathcal{G}_{g,y}$. 
Then, since our utilized retraction over $\cM$ satisfies \eqref{eq:retraction_cst}, if $t \leq \delta_z/(L_1 +1)$ we have
\begin{align}
\| \Retr_{\cM'}(\bz^k, t \bd^k) - \bz^k\|_{\cM'} &= \| \Retr_{\cM}(\bx^k, t \bd_x^k) - \bx^k\|_{\cM} + \| (\byk + t \bd_y^k) - \byk\|  \nonumber \\
&\leq t L_1 \| \bd_x^k\|_{\bxk}  + t \|\bd_y^k\|  \nonumber \\
&\leq \delta_z,
\end{align}
and the error bound condition \eqref{eq:error_z} holds.
Therefore,  
\begin{equation}\label{eq:joint_error}
\text{dist}\left( (\Retr_{\cM}(\bxk,t \bd_x^k), \byk + t \bd_y^k), \cC\right) \leq \frac{\tau_z}{2} \left(\mathcal{G}_{\bar{c},z} + \mathcal{H}_{g,y} \right) t^2.
\end{equation}

Define $\bz^k(t)= (\bx^k(t), \by^k(t)) := (\Retr_{\cM}(\bxk, t \bd_x^{k}), \byk + t \bd_y^{k})$ and $(\bx_p^k(t), \by_p^k(t)) = \Pi_{\cC}(\bx^k(t), \by^k(t))$. 
By Lemma~\ref{lem:retraction}, Assumption~\ref{a:norm_equivalence}, and the fact the exponential map over the product manifold $\cM'$ is the Cartesian product of the exponential maps over $\cM$ and $\R^n$, if 
\[
t \leq \min \{ \rho/(L_1 +1), \sqrt{{\kappa_\cM}/(\tau_z\Gamma(\mathcal{G}_{\bar{c},z} + \mathcal{H}_{g,y}))} \},
\]
then there exists $\tilde{\bd}(t) = (\tilde{\bd_x}(t), \tilde{\bd_y}(t)) \in \mathcal{T}_{(\bxk(t),\byk(t))}\cM'$ such that 
\[
(\bxk_p(t),\byk_p(t)) =  \left( \text{Exp}(\bxk(t),t\tilde{\bd_x}(t)), \byk + t\tilde{\bd_y}(t) \right).
\]
Furthermore, we have
\begin{equation}\label{eq:joint_1}
\text{\rm dist}((\bxk_p(t),\byk_p(t)), (\bxk(t),\byk(t))) = \|\tilde{\bd}(t)\|_{\bz^k(t)} \leq \frac{\tau_z \Gamma}{2}\left(\mathcal{G}_{\bar{c},z} + \mathcal{H}_{g,y}\right) t^2.
\end{equation}
Therefore, by the gradient Lipschitz retraction on $\bar{f}$ we have
\begin{align}
&f(\bxk_p(t),\byk_p(t))  \nonumber \\
&\leq f(\bxk(t),\byk(t)) + \langle \grad_{z} \bar{f}(\bz^k(t)), \tilde{\bd}(t)\rangle_{\bz^k(t)}  + \frac{L_R}{2}\|\tilde{\bd}(t) \|_{\bz^k(t)}^2 \nonumber  \\
&\leq f(\bxk(t),\byk(t)) + \mathcal{G}_{\bar{f},z} \|\tilde{\bd}(t)\|_{\bz^k(t)}  + \frac{L_R}{2}\|\tilde{\bd}(t) \|_{\bz^k(t)}^2 \nonumber  \\
&\overset{\eqref{eq:joint_1}}{\leq} f(\bxk(t),\byk(t)) + \mathcal{G}_{\bar{f},z}\left( \frac{\tau_z\Gamma}{2} \left(\mathcal{G}_{\bar{c},z} + \mathcal{H}_{g,y} \right) t^2\right)  + \frac{L_R}{2}\left( \frac{\tau_z\Gamma}{2} \left(\mathcal{G}_{\bar{c},z} + \mathcal{H}_{g,y} \right) t^2\right)^2 \nonumber \\
&\overset{\eqref{eq:f_improve}}{\leq} f(\bxk,\byk) - t m^{\delta_2}_{KKT}(\bxk,\byk) + \frac{1}{2}L_R t^2 \nonumber \\
&\hspace{2.0in}+ \mathcal{G}_{\bar{f},z}\left( \frac{\tau_z \Gamma}{2} \left(\mathcal{G}_{\bar{c},z} + \mathcal{H}_{g,y} \right) t^2\right)  + \frac{L_R}{2}\left( \frac{\tau_z\Gamma}{2} \left(\mathcal{G}_{\bar{c},z} + \mathcal{H}_{g,y} \right) t^2\right)^2 \nonumber \\
&\leq f(\bxk,\byk) - t m^{\delta_2}_{KKT}(\bxk,\byk) + \left(\frac{L_R}{2} + \frac{\tau_z\Gamma}{2} \mathcal{G}_{\bar{f},z}(\mathcal{G}_{\bar{c},z} + \mathcal{H}_{g,y})+ \frac{1}{8}L_R \Gamma^2\tau_z^2 \left( \mathcal{G}_{\bar{c},z} + \mathcal{H}_{g,y} \right)^2 \right)t^2, \nonumber 
\end{align}
where the last inequality follows assuming $t\leq 1$. Thus, a sufficient decrease shall be obtained provided 
\begin{multline}
t \leq \min\bigg\{1, \frac{\delta_z}{L_1+1}, \frac{\rho}{L_1+1}, \sqrt{\frac{\kappa_{\cM}}{\tau_z \Gamma(\mathcal{G}_{\bar{c},z}+ \mathcal{H}_{g,y} )}},\\ \frac{(1-\alpha_3)m^{\delta_2}_{KKT}(\bxk,\byk)}{ \frac{L_R}{2} + \frac{\tau_z}{2}\Gamma\mathcal{G}_{\bar{f},z}(\mathcal{G}_{\bar{c},z} + \mathcal{H}_{g,y})+ \frac{1}{8}L_R \tau_z^2 \Gamma^2 \left( \mathcal{G}_{\bar{c},z} + \mathcal{H}_{g,y} \right)^2 } \bigg\}. \nonumber
\end{multline}
Hence, as in the prior arguments, it follows the stepsize in the joint-updating phase is lower bounded by 
\begin{multline}
t_{\text{low},\text{kkt}} =  \gamma \min\bigg\{1, \frac{\delta_z}{L_1+1}, \frac{\rho}{L_1+1},  \sqrt{\frac{\kappa_{\cM}}{\tau_z \Gamma(\mathcal{G}_{\bar{c},z}+ \mathcal{H}_{g,y} )}}, \\ \frac{(1-\alpha_3)\epsilon_3}{ \frac{L_R}{2} + \frac{\tau_z}{2}\Gamma\mathcal{G}_{\bar{f},z}(\mathcal{G}_{\bar{c},z} + \mathcal{H}_{g,y})+ \frac{1}{8}L_R \tau_z^2 \Gamma^2 \left( \mathcal{G}_{\bar{c},z} + \mathcal{H}_{g,y} \right)^2 } \bigg\}  \nonumber 
\end{multline}
and the result follows. 
\end{proof}

\subsection{Overall Complexity}
Using the prior updating phases, we can derive the overall complexity of Algorithm~\ref{alg:Staged_bcd} for locating an approximate first-order KKT point of \eqref{eq:main_model}. 
\begin{theorem}\label{thm:complexity}
Assuming the initial level-set $L(\bx^0,\by^0)$ is bounded, their exists a finite lower bound $f^*$ such that $f^* \leq f(\bx,\by)$ for all $(\bx,\by) \in L(\bx^0,\by^0)$, and Assumptions 1-6 hold, then Algorithm~\ref{alg:Staged_bcd} shall locate an $(\epsilon,\epsilon)$-approximate first-order KKT point of \eqref{eq:main_model} in at most $\mathcal{O}(1/\epsilon^2)$ iterations.  
\end{theorem}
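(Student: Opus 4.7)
The plan is to combine the three improvement lemmas (Lemmas~\ref{lem:y_improvement}, \ref{lem:x_improvement}, and \ref{lem:joint_improvement}) with the level-set boundedness to bound the total number of iterations before termination. First I would set $\epsilon_1 = \epsilon_2 = \epsilon_3 = \epsilon$ and $\delta_1 = \delta_2 = \epsilon$ in Algorithm~\ref{alg:Staged_bcd}, so that termination of the main loop automatically produces an $(\epsilon,\epsilon)$-approximate first-order KKT point in the sense of Definition~\ref{def:eps_kkt}. Before invoking the lemmas, I would verify that the constants $\mathcal{G}_{f,y}, \mathcal{G}_{g,y}, \mathcal{H}_{g,y}, \mathcal{G}_{f,x}, \mathcal{G}_{\bar{f},z}, \mathcal{G}_{c,x}, \mathcal{G}_{\bar{c},z}, \mathcal{H}_{c,y}$ are all finite: Assumption~\ref{a:smooth1} gives continuous differentiability of the required orders, the supremum in each defining expression is taken over the compact set $L(\bx^0,\by^0)$ (intersected with unit tangent balls), and by monotonicity of $\{f(\bx^k,\by^k)\}$ across all three phases, every iterate remains in $L(\bx^0,\by^0)$, so every constant used in the three lemmas is a valid upper bound for every iteration.

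Next I would observe the key scaling of the lower step-size bounds. Inspecting \eqref{eq:ty_low}, \eqref{eq:x_step2}, and the expression for $t_{\text{low},\text{kkt}}$ in Lemma~\ref{lem:joint_improvement}, each of these minima has a constant part (the $1$, the $\delta/L_1$, the $\sqrt{\kappa_{\cM}/\cdots}$ terms) that is independent of $\epsilon$, plus a term proportional to $\epsilon$. Hence for all sufficiently small $\epsilon$ there is a constant $c > 0$, depending only on the problem data and the initialization, such that
\begin{equation*}
\min\{t_{\text{low},y},\,t_{\text{low},x},\,t_{\text{low},\text{kkt}}\} \;\geq\; c\,\epsilon.
\end{equation*}
Combining this with Lemmas~\ref{lem:y_improvement}, \ref{lem:x_improvement}, and \ref{lem:joint_improvement} gives a uniform per-iteration descent
\begin{equation*}
f(\bx^{k+1},\by^{k+1}) \;\leq\; f(\bx^k,\by^k) \;-\; C\,\epsilon^{2},
\end{equation*}
where $C = c\,\min(\alpha_1,\alpha_2,\alpha_3)$, as long as the algorithm has not yet terminated (i.e., at least one of $m_y^{\delta_1}, m_x, m_{KKT}^{\delta_2}$ exceeds $\epsilon$, triggering exactly one of the three phases).

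Telescoping this inequality from $k=0$ to $k=K-1$ and using the lower bound $f^*$ on $f$ over $L(\bx^0,\by^0)$ yields
\begin{equation*}
K\,C\,\epsilon^{2} \;\leq\; f(\bx^0,\by^0) - f^*,
\end{equation*}
so the algorithm must terminate within
\begin{equation*}
K \;\leq\; \frac{f(\bx^0,\by^0) - f^*}{C\,\epsilon^{2}} \;=\; \mathcal{O}(1/\epsilon^{2})
\end{equation*}
iterations, and at termination the returned point satisfies all of $m_y^{\delta_1}(\bx^k,\by^k) \leq \epsilon$, $m_x(\bx^k,\by^k) \leq \epsilon$, and $m_{KKT}^{\delta_2}(\bx^k,\by^k) \leq \epsilon$, which by Proposition~\ref{prop:measure_opt} and Definition~\ref{def:eps_kkt} certifies it as an $(\epsilon,\epsilon)$-approximate first-order KKT point.

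The main obstacle I expect is not the counting argument itself, which is a routine telescoping, but rather the careful bookkeeping needed to confirm the three lemmas apply uniformly: one must argue that the compactness of $L(\bx^0,\by^0)$ (from level-set boundedness together with Assumption~\ref{a:smooth1}) guarantees finiteness of every defined supremum \eqref{eq:Gfynew}--\eqref{eq:Hc}, that the intermediate retraction iterates produced inside each line-search remain close enough to $L(\bx^0,\by^0)$ that these global constants are still valid (which follows from the unit-norm bound on the search direction together with Proposition~\ref{prop:retraction_lip}), and that the error-bound radii $\delta_x,\delta_y,\delta_z$ in Assumption~\ref{a:error_bound_conditions} and the injectivity radius $\kappa_\cM$ are all bounded away from zero so that the $\epsilon$-independent part of each $t_{\text{low}}$ does not degenerate. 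Once these preliminaries are in place, the $\mathcal{O}(1/\epsilon^2)$ bound falls out immediately from the three per-phase descent estimates.
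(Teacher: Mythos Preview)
Your proposal is correct and follows essentially the same approach as the paper: set all tolerances to $\epsilon$, invoke the three per-phase descent lemmas to obtain a uniform decrease of order $\epsilon^2$ per iteration (using that each $t_{\text{low}}$ scales like $\epsilon$ for small $\epsilon$), and telescope against the lower bound $f^*$. Your added discussion of why the defining constants are finite and why iterates remain in $L(\bx^0,\by^0)$ is more careful than the paper's own presentation, but the core argument is identical.
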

\begin{proof}
Assume $\delta_1 = \delta_2 = \epsilon_1 = \epsilon_2 = \epsilon_3 = \epsilon$ in Algorithm~\ref{alg:Staged_bcd}. 
For the sake of obtaining a contradiction, assume additionally the algorithm never terminates. 
By Lemmas~\ref{lem:y_improvement}, \ref{lem:x_improvement}, and \ref{lem:joint_improvement}, it follows 
\[
f(\bxkp,\bykp) \leq f(\bxk,\byk) - \left(\min_{1\leq i \leq 3} \alpha_i\right) \cdot \min\{t_{\text{low},y}, t_{\text{low},x}, t_{\text{low,kkt}}\}\cdot \epsilon
\]
 for all $k\geq 0$. For $\epsilon$ small enough there exist positive constants $C_1, C_2$, and $C_3$ such that 
 \[
t_{\text{low},y} = C_1 \epsilon,\;\; t_{\text{low},x} = C_2 \epsilon,\;\; \text{and}\;\; t_{\text{low},kkt} = C_3 \epsilon.
\]
Hence, letting $\Theta:= \left(\min_{1\leq i \leq 3} \alpha_i \right)\cdot \left(\min_{1\leq i \leq 3} C_i \right)$ we have 
 $
 f(\bxkp,\bykp) \leq f(\bxk,\byk) - \Theta \epsilon^2
 $
 for all $k\geq 0$.
Since the algorithm does not terminate, by repeated application of the above inequality we have
 \[
 f(\bxkp,\bykp) \leq f(\bx,^0\by^0) - \Theta (k+1)\epsilon^2.
 \]
So, if $k > \lceil \frac{f(\bx^0,\by^0)-f^*}{\Theta \epsilon^2} + 1\rceil$, then $f(\bxkp,\bykp)< f^*$  which violates our assumptions. 
Therefore, the algorithm must terminate in at most $\mathcal{O}(1/\epsilon^2)$ iterations.
\end{proof}

\section{Numerical Experiments}\label{sec:experiments}
The purpose of this section is twofold. First, we want to demonstrate the efficacy of the modeling framework given by \eqref{eqn:gen_spec_coord}, and, second, we want to exhibit the theory and applicability of Algorithm~\ref{alg:Staged_bcd}. 

\subsection{Generalized Semidefinite Programs}
To demonstrate the novelty of our modeling framework and the ability of our method to solve general matrix optimization models with nonconvex constraints, we present a set of numerical tests on generalized semidefinite programs.
The instance of \eqref{eq:gen_SDP} we considered in our experiments was
\begin{align}\label{eq:gen_SDP_test}
\min&\;\; \langle -\bm{I}, \bm{X} \rangle \\
\text{s.t.}&\;\; \langle \bA_i, \bX \rangle = \ell_i, \;\; i=1,\hdots, s, \nonumber \\
           &\;\; \lambda_n(\bX) \leq b_1, \nonumber \\
           &\;\; \lambda_n(\bX) + \lambda_{n-1}(\bX) \leq b_2, \nonumber \\ 
           &\;\;\hspace{0.5in}\vdots \nonumber \\
           &\;\; \lambda_n(\bX) + \lambda_{n-1}(\bX) + \cdots + \lambda_1(\bX) \leq b_n, \nonumber \\
           &\;\; \lambda_n(\bX) \geq 0, \nonumber \\
           &\;\; \bX \in \Symn, \nonumber  
\end{align}
where we designed the problem parameters such that a feasible point existed. 
A few observations are notable about \eqref{eq:gen_SDP_test}.
First, the objective function is lower bounded by $-b_n$ if we look at the second to last inequality constraint in the model. 
Thus, we have a simple way of measuring if our algorithm is able to compute a global minimizer. 
Second, the eigenvalue constraint in the model is nonconvex since the left-hand side of each of the inequality constraints is a concave function. 

We generated our test problems by randomly constructing a positive definite matrix $\bC$, setting $\ell_i = \langle \bA_i, \bC \rangle$ for $i=1, \hdots, s$, with $\bA_i$ a randomly generated symmetric matrix, and setting $b_i$ equal to the sum of the $i$ smallest eigenvalues of $\bC$. 
Hence, $\bC$ is a feasible solution to \eqref{eq:gen_SDP_test} and because it makes the second to last inequality tight it yields a globally optimal solution.  

We conducted ten randomized tests for varying problems sizes of \eqref{eq:gen_SDP_test}. 
The results are displayed in Table~\ref{tab:SDP_table}.
For each randomized test we computed the following values: 
\begin{itemize}
\item the absolute difference between the objective value of the final iterate generated by Algorithm~\ref{alg:Staged_bcd} and the optimal objective value, i.e., $|f(\bX^{\text{final}}) - f^*|$
\item the feasibility error of the final iterate computed by Algorithm~\ref{alg:Staged_bcd} with respect to the equality constraints, i.e., $\|\bm{G}(\bX^{\text{final}})\|$
\item the feasibility error of the final iterate produced by Algorithm~\ref{alg:Staged_bcd} with respect to the inequality constraints, i.e., $\| \max\left(\bm{g}(\blambda(\bX^{\text{final}})),\bm{0}\right)\|$
\end{itemize}
%
Table~\ref{tab:SDP_table} also tracks the minimum, median, and maximum values of these three quantities across the ten randomized experiments for each problem size.
It also counts the total number of problems solved for each dimension; 
we consider a problem solved if the difference between the objective value computed when the algorithm terminated was within $10^{-6}$ of the optimal value, and the point located had a feasibility error less than $10^{-6}$ in both the equality and inequality constraints. 

\begin{table}[H]
\centering
\begin{tabular}{|c|c|cll|cc|}
\hline
\multicolumn{1}{|l|}{\textbf{Dim.}} & \textbf{Solved}  & \multicolumn{3}{l|}{\textbf{Dist. to Opt. Value}}     & \multicolumn{2}{c|}{\textbf{Feasibility Error}}              \\ \cline{3-7} 
\multicolumn{1}{|l|}{}                       &                          & \multicolumn{3}{c|}{( $|f(\bX^{\text{final}}) - f^*|$ ) }   & 
\multicolumn{1}{c|}{\textbf{Equality}} 
& 
\textbf{Inequality} 
\\ \hline
\textbf{n=5}    & 9  & \multicolumn{3}{c|}{[0, 1.48e-11, 1.22]}        & \multicolumn{1}{c|}{[2.22e-16, 1.12e-08, 1.78e-07]}   & [0, 4.86e-7, 9.99e-7]     \\ \hline
\textbf{n=10}   & 8  & \multicolumn{3}{c|}{[0, 1.31e-08, 3.68]}        & \multicolumn{1}{c|}{[7.11e-15, 2.51e-14, 2.57e-07]}   
& [0, 0, 9.53e-7]     
\\ \hline
\textbf{n=25}   & 10 & \multicolumn{3}{c|}{[0, 2.27e-13, 5.89e-8]}     & \multicolumn{1}{c|}{[7.69e-14, 1.42e-13, 2.06e-07]}   
& [0, 0, 9.69e-7]     
\\ \hline
\textbf{n=50}   & 10 & \multicolumn{3}{c|}{[0, 1.59e-12, 3.44e-7]}     & \multicolumn{1}{c|}{[6.96e-13, 4.40e-09, 1.30e-07]}   & [0, 4.54e-7, 9.69e-7]     \\ \hline
\textbf{n=100}  & 8  & \multicolumn{3}{c|}{[1.82e-12, 5.46e-12, 4.72]}& \multicolumn{1}{c|}{[4.33e-12, 3.22e-10, 1.89e-08]}   & 
[0, 4.47e-7, 9.93e-7]     
\\ \hline
\end{tabular}
\caption{Displays the results of the randomized tests conducted on \eqref{eq:gen_SDP_test}. 
Each entry in the columns provides the minimum, median, and maximum value of the respective quantity for the ten randomized experiments conducted on each problem size $n$.
Any numerical value obtained below $10^{-20}$ was set to zero in the table. 
}
\label{tab:SDP_table}
\end{table}

From the table we notice Algorithm~\ref{alg:Staged_bcd} was able to find near globally optimal solutions to a vast majority of the problems. 
Even though \eqref{eq:gen_SDP_test} is a nonconvex problem, our method successfully solve 45 of the 50 problems instances. 
Furthermore, our method seems to be independent of the problem size, at least for the subset of generalized SDPs represented by \eqref{eq:gen_SDP_test}. 
A question worth pursuing in the future is applications of generalized SDPs, especially as an approach to relax problems which have classically been rewritten as semidefinite programs. 

\subsection{Quadratically Constrained Quadratic Programs}\label{sec:QCQP_exp}
An important instance of a QCQP is 
\begin{align}\label{eq:QCQP_test}
\min&\; \bx^\top \bm{C} \bx \\
\text{s.t.}&\; \bx^\top \bm{A}_i \bx \geq 1, \; i=1,\hdots, m, \nonumber \\
           &\; \bx \in \R^n, \nonumber 
\end{align}
where $\bm{C}$ and $\bm{A}_i$ are positive semidefinite. 
Though the objective function in \eqref{eq:QCQP_test} is convex, the constraint is nonconvex since it is the intersection of the exterior of $m$ ellipsoids. 
This model is well-known to be NP-Hard and in-general more difficult than Max-cut, 
a comparison of the approximation ratios between the two problems supports this position;
\eqref{eq:QCQP_test} has applications in communications through the multicast downlink transmit beamforming problem \cite{alex2010convex}.

A popular and powerful approach to tackle \eqref{eq:QCQP_test} follows by relaxing the problem to a semidefinite program and then applying a randomization procedure to compute feasible solutions to the problem. 
Namely, \eqref{eq:QCQP_test} is rewritten as the equivalent matrix optimization model,
\begin{align}\label{eq:QCQP_mat}
\min&\; \langle \bm{C}, \bX \rangle  \\
\text{s.t.}&\; \langle \bm{A}_i, \bX \rangle \geq 1, \; i=1,\hdots, m, \nonumber \\
           &\; \bX \succeq \bm{0}, \; \text{rank}(\bX) = 1, \nonumber 
\end{align}
and then relaxed to the semidefinite program 
\begin{align}\label{eq:QCQP_SDR}
\min&\; \langle \bm{C}, \bX \rangle \\
\text{s.t.}&\; \langle -\bA_i, \bX \rangle \leq -1, \; i=1,\hdots, m, \nonumber \\
           &\; \bX \succeq \bm{0}. \nonumber 
\end{align}
A global min $\bX^*_{sdp}$ is then computed to \eqref{eq:QCQP_SDR} and is utilized to define Gaussian random variables $\bm{\xi}_\ell \sim \mathcal{N}(\bm{0}, \bX_{sdp}^*)$ for $\ell = 1, \hdots, L$.  
From these random variables, feasible solutions to \eqref{eq:QCQP_test} are formed, $\bx(\bm{\xi}_\ell)$, such that 
\begin{equation}\label{eq:rand_QCQP}
\bx(\bm{\xi}_\ell)  = \frac{\bm{\xi}_\ell}{\sqrt{\min_{1\leq i \leq m} \bm{\xi}_\ell^\top \bA_i \bm{\xi}_\ell}}
\end{equation}
for $\ell=1,\hdots, L$.
Randomization procedures such as this combined with semidefinite program relaxations have proven extremely effective at locating approximate solutions to challenging QCQPs;
see \cite{luo2010semidefinite} for a survey of semidefinite relaxation and randomization applied to QCQPs. 

The model and algorithm we have developed present new approaches for solving \eqref{eq:QCQP_test} 
by enabling novel nonconvex relaxations of the rank-1 constraint in the equivalent model \eqref{eq:QCQP_mat}. 
Rather than removing the rank condition completely to obtain a convex problem, we instead solve 
\begin{align}\label{eq:QCQP_SCO}
\min&\; \langle \bm{C}, \bX \rangle \\
\text{s.t.}&\; \langle -\bA_i, \bX \rangle \leq -1, \; i=1,\hdots, m, \nonumber \\
           &\; \lambda_1(\bX) \geq \delta, \; \lambda_i(\bX) \in [0,\delta],\; i=2,\hdots, n, \nonumber \\
           &\; \bX \in \Symn, \nonumber 
\end{align}
where the parameter $\delta \geq 0$ allows us to alter the tightness of our approximation.
If $\delta = 0$, then \eqref{eq:QCQP_SCO} is equivalent to \eqref{eq:QCQP_mat}.
For small positive values of $\delta$, the eigenvalue constraint enforces a near rank-1 constraint, as the bottom $n-1$ eigenvalues of $\bX$ are then forced to be within $\delta$ of zero. 
Once an approximate solution to \eqref{eq:QCQP_SCO} is computed using Algorithm~\ref{alg:Staged_bcd},  we form feasible solutions to \eqref{eq:QCQP_test} in two different fashions:
\begin{enumerate}
\item[(A1.)] {\bf Projection Approach:} After computing a stationary point $\bX^*$ to \eqref{eq:QCQP_SCO}, the top eigenvalue $\lambda_1(\bX^*)$ and corresponding eigenvector $\bm{v}_1$ of $\bX^*$ are computed and the vector $\bm{\xi}_* = \sqrt{\lambda_1(\bX^*)} \bm{v}_1$ is formed. 
Then, using \eqref{eq:rand_QCQP}, the vector $\bm{\xi}_*$ is scaled so that it satisfies the quadratic inequality constraints in \eqref{eq:QCQP_test}.
This scaled version of $\bm{\xi}_*$ is then used as an approximate solution to \eqref{eq:QCQP_test}.
\item[(A2).] {\bf Randomization Approach:} After computing a stationary point to \eqref{eq:QCQP_SCO}, the stationary point is utilized to generate randomized solutions in the same manner as done in the semidefinite relaxation and randomization approach described above.
\end{enumerate}

\begin{figure}[t]
    \centering
    \includegraphics[width=0.7\linewidth]{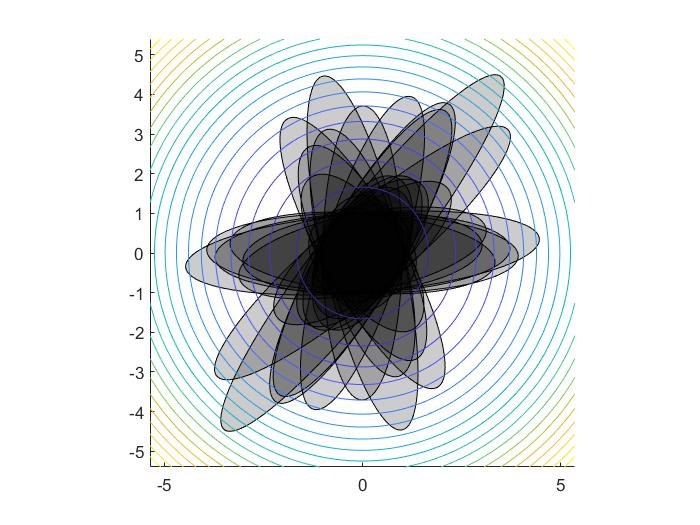}
    \caption{
    Displays the feasible region and level sets for an instance of \eqref{eq:QCQP_test_instances} with $m=25$. The white region represents all of the feasible points outside the interior of the $m$ ellipsoids while the concentric circles centered at the origin are the level sets of the norm-squared objective function.  
    }
    \label{fig:feasible_region}
\end{figure}

To compare our model \eqref{eq:QCQP_SCO} and two approximation approaches detailed above to the semidefinite program relaxation (SDR) approach with randomization, we conducted forty numerical experiments. 
In-order to check if our method was able to compute global minimums, we conducted our numerical experiments with $n=2$ so that a brute force grid search was able to compute a global optimum of \eqref{eq:QCQP_test} in a less than prohibitively long time. 
The form of \eqref{eq:QCQP_test} we considered in our experiments was 
\begin{align}\label{eq:QCQP_test_instances}
\min&\; \|\bx\|^2 \\
\text{s.t.}&\; \bx^\top \bm{A}_i \bx \geq 1, \; i=1,\hdots, m, \nonumber \\
           &\; \bx \in \R^2, \nonumber 
\end{align}
where we randomly generated positive definite matrices $\bA_i$ for $i=1, \hdots, m$. 
An example of the feasible region and level sets of the objective can be seen in Figure~\ref{fig:feasible_region}. 
In our experiments, we conducted ten randomized tests for $m=5, 10, 25$, and $50$, and solved \eqref{eq:QCQP_SCO} with $\delta=10^{-1}, 10^{-3}$, and $10^{-6}$ for all settings.
When generating points via randomization, we sampled 20 points from the Gaussian distributions. 
Since our model is nonconvex, we can benefit from initializing our algorithm multiple times and resolving, whereas the SDR approach cannot. 
Thus, for these tests, we solved each instance of \eqref{eq:QCQP_SCO} three times. One initialization of our algorithm was formed by taking the solution to \eqref{eq:QCQP_SDR} and projecting it onto the constraint set in \eqref{eq:QCQP_SCO}; the other two came from randomly generating a symmetric matrix and projecting onto the constraint set. 
Tables~\ref{tab:QCQP_tab1} and \ref{tab:QCQP_tab2} display the numerical results of the experiments. 

\begin{landscape}
\begin{table}[H]
\centering
\begin{tabular}{|cc|c|cc|cccllllll|}
\hline
\multicolumn{1}{|l}{} & \multicolumn{1}{l|}{} & \multicolumn{1}{l|}{}  & \multicolumn{2}{c|}{\textbf{SDR}}            & \multicolumn{9}{c|}{\textbf{Our Model}}                                                                                                                                                                                                                                                                                                                       \\ \cline{6-14}
\multicolumn{1}{|l}{} & \multicolumn{1}{l|}{} & \multicolumn{1}{l|}{}  & \multicolumn{1}{l}{} & \multicolumn{1}{l|}{} & \multicolumn{1}{l}{} & \multicolumn{1}{l}{$\delta = 10^{-1}$} & \multicolumn{1}{l|}{}                    &                                       & $\delta = 10^{-3}$                           & \multicolumn{1}{l|}{}                    &                                       & $\delta = 10^{-6}$                           &                                          \\ \cline{1-14}
\textbf{m}            & \textbf{Test}         & \textbf{Optimal} & \textbf{Orig.}    & \textbf{Random}   & \multicolumn{1}{c|}{\textbf{Orig.}} & \textbf{Random}               & \multicolumn{1}{c|}{\textbf{Project}} & \multicolumn{1}{c|}{\textbf{Orig.}} & \multicolumn{1}{c}{\textbf{Random}} & \multicolumn{1}{c|}{\textbf{Project}} & \multicolumn{1}{c|}{\textbf{Orig.}} & \multicolumn{1}{c}{\textbf{Random}} & \multicolumn{1}{c|}{\textbf{Project}} \\ \hline
  5 &  1 &  1.10 &  1.10 & \textbf{\color{blue} 1.10} & \multicolumn{1}{c|}{ 1.10} & \textbf{\color{blue} 1.10} & \multicolumn{1}{c|}{\textbf{\color{blue} 1.10}} & \multicolumn{1}{c|}{ 1.10} & \multicolumn{1}{c}{\textbf{\color{blue} 1.10}} & \multicolumn{1}{c|}{\textbf{\color{blue} 1.10}} & \multicolumn{1}{c|}{ 1.10} & \multicolumn{1}{c}{\textbf{\color{blue} 1.10}} & \multicolumn{1}{c|}{\textbf{\color{blue} 1.10}} \\
 &  2 &  1.49 &  1.49 & \textbf{\color{blue} 1.49} & \multicolumn{1}{c|}{ 1.49} & \textbf{\color{blue} 1.49} & \multicolumn{1}{c|}{\textbf{\color{blue} 1.49}} & \multicolumn{1}{c|}{ 1.49} & \multicolumn{1}{c}{\textbf{\color{blue} 1.49}} & \multicolumn{1}{c|}{\textbf{\color{blue} 1.49}} & \multicolumn{1}{c|}{ 1.49} & \multicolumn{1}{c}{\textbf{\color{blue} 1.49}} & \multicolumn{1}{c|}{\textbf{\color{blue} 1.49}} \\
 &  3 &  2.86 &  1.91 &  3.17 & \multicolumn{1}{c|}{ 2.74} &  2.94 & \multicolumn{1}{c|}{\textbf{\color{blue} 2.87}} & \multicolumn{1}{c|}{ 2.85} & \multicolumn{1}{c}{\textbf{\color{blue} 2.86}} & \multicolumn{1}{c|}{\textbf{\color{blue} 2.85}} & \multicolumn{1}{c|}{ 2.85} & \multicolumn{1}{c}{\textbf{\color{blue} 2.85}} & \multicolumn{1}{c|}{\textbf{\color{blue} 2.85}} \\
 &  4 &  2.33 &  1.77 &  2.41 & \multicolumn{1}{c|}{ 2.25} & \textbf{\color{blue} 2.33} & \multicolumn{1}{c|}{\textbf{\color{blue} 2.34}} & \multicolumn{1}{c|}{ 2.33} & \multicolumn{1}{c}{\textbf{\color{blue} 2.33}} & \multicolumn{1}{c|}{\textbf{\color{blue} 2.33}} & \multicolumn{1}{c|}{ 2.33} & \multicolumn{1}{c}{\textbf{\color{blue} 2.33}} & \multicolumn{1}{c|}{\textbf{\color{blue} 2.33}} \\
 &  5 &  1.67 &  1.66 &  1.83 & \multicolumn{1}{c|}{ 1.67} & \textbf{\color{blue} 1.68} & \multicolumn{1}{c|}{\textbf{\color{blue} 1.68}} & \multicolumn{1}{c|}{ 1.67} & \multicolumn{1}{c}{\textbf{\color{blue} 1.67}} & \multicolumn{1}{c|}{\textbf{\color{blue} 1.67}} & \multicolumn{1}{c|}{ 1.67} & \multicolumn{1}{c}{\textbf{\color{blue} 1.67}} & \multicolumn{1}{c|}{\textbf{\color{blue} 1.67}} \\
 &  6 &  2.26 &  1.76 &  2.59 & \multicolumn{1}{c|}{ 2.20} &  2.27 & \multicolumn{1}{c|}{ 2.28} & \multicolumn{1}{c|}{ 2.26} & \multicolumn{1}{c}{\textbf{\color{blue} 2.26}} & \multicolumn{1}{c|}{\textbf{\color{blue} 2.26}} & \multicolumn{1}{c|}{ 2.26} & \multicolumn{1}{c}{\textbf{\color{blue} 2.26}} & \multicolumn{1}{c|}{\textbf{\color{blue} 2.26}} \\
 &  7 &  1.91 &  1.67 &  1.94 & \multicolumn{1}{c|}{ 1.87} & \textbf{\color{blue} 1.91} & \multicolumn{1}{c|}{\textbf{\color{blue} 1.92}} & \multicolumn{1}{c|}{ 1.91} & \multicolumn{1}{c}{\textbf{\color{blue} 1.91}} & \multicolumn{1}{c|}{\textbf{\color{blue} 1.91}} & \multicolumn{1}{c|}{ 1.91} & \multicolumn{1}{c}{\textbf{\color{blue} 1.91}} & \multicolumn{1}{c|}{\textbf{\color{blue} 1.91}} \\
 &  8 &  1.13 &  1.13 & \textbf{\color{blue} 1.13} & \multicolumn{1}{c|}{ 1.13} & \textbf{\color{blue} 1.13} & \multicolumn{1}{c|}{\textbf{\color{blue} 1.13}} & \multicolumn{1}{c|}{ 1.13} & \multicolumn{1}{c}{\textbf{\color{blue} 1.13}} & \multicolumn{1}{c|}{\textbf{\color{blue} 1.13}} & \multicolumn{1}{c|}{ 1.13} & \multicolumn{1}{c}{\textbf{\color{blue} 1.13}} & \multicolumn{1}{c|}{\textbf{\color{blue} 1.13}} \\
 &  9 &  2.57 &  1.93 &  2.59 & \multicolumn{1}{c|}{ 2.49} &  2.64 & \multicolumn{1}{c|}{ 2.58} & \multicolumn{1}{c|}{ 2.57} & \multicolumn{1}{c}{\textbf{\color{blue} 2.57}} & \multicolumn{1}{c|}{\textbf{\color{blue} 2.57}} & \multicolumn{1}{c|}{ 2.57} & \multicolumn{1}{c}{\textbf{\color{blue} 2.57}} & \multicolumn{1}{c|}{\textbf{\color{blue} 2.57}} \\
 & 10 &  2.18 &  1.84 &  2.35 & \multicolumn{1}{c|}{ 2.14} &  2.19 & \multicolumn{1}{c|}{ 2.19} & \multicolumn{1}{c|}{ 2.17} & \multicolumn{1}{c}{\textbf{\color{blue} 2.18}} & \multicolumn{1}{c|}{\textbf{\color{blue} 2.17}} & \multicolumn{1}{c|}{ 2.17} & \multicolumn{1}{c}{\textbf{\color{blue} 2.17}} & \multicolumn{1}{c|}{\textbf{\color{blue} 2.17}} \\ \hline
 10 &  1 &  5.41 &  1.95 &  9.05 & \multicolumn{1}{c|}{ 5.19} &  5.68 & \multicolumn{1}{c|}{ 5.57} & \multicolumn{1}{c|}{ 5.56} & \multicolumn{1}{c}{ 5.57} & \multicolumn{1}{c|}{ 5.56} & \multicolumn{1}{c|}{ 5.56} & \multicolumn{1}{c}{ 5.56} & \multicolumn{1}{c|}{ 5.56} \\
 &  2 &  3.91 &  1.89 &  5.05 & \multicolumn{1}{c|}{ 3.68} &  4.41 & \multicolumn{1}{c|}{\textbf{\color{blue} 3.92}} & \multicolumn{1}{c|}{ 3.91} & \multicolumn{1}{c}{\textbf{\color{blue} 3.92}} & \multicolumn{1}{c|}{\textbf{\color{blue} 3.91}} & \multicolumn{1}{c|}{ 3.91} & \multicolumn{1}{c}{\textbf{\color{blue} 3.91}} & \multicolumn{1}{c|}{\textbf{\color{blue} 3.91}} \\
 &  3 &  2.37 &  1.88 &  2.60 & \multicolumn{1}{c|}{ 2.31} &  2.42 & \multicolumn{1}{c|}{\textbf{\color{blue} 2.36}} & \multicolumn{1}{c|}{ 2.36} & \multicolumn{1}{c}{\textbf{\color{blue} 2.36}} & \multicolumn{1}{c|}{\textbf{\color{blue} 2.36}} & \multicolumn{1}{c|}{ 2.36} & \multicolumn{1}{c}{\textbf{\color{blue} 2.36}} & \multicolumn{1}{c|}{\textbf{\color{blue} 2.36}} \\
 &  4 &  1.78 &  1.72 &  2.11 & \multicolumn{1}{c|}{ 1.77} & \textbf{\color{blue} 1.79} & \multicolumn{1}{c|}{ 1.80} & \multicolumn{1}{c|}{ 1.78} & \multicolumn{1}{c}{\textbf{\color{blue} 1.78}} & \multicolumn{1}{c|}{\textbf{\color{blue} 1.78}} & \multicolumn{1}{c|}{ 1.78} & \multicolumn{1}{c}{\textbf{\color{blue} 1.78}} & \multicolumn{1}{c|}{\textbf{\color{blue} 1.78}} \\
 &  5 &  2.98 &  1.92 &  3.18 & \multicolumn{1}{c|}{ 2.87} &  3.01 & \multicolumn{1}{c|}{\textbf{\color{blue} 2.98}} & \multicolumn{1}{c|}{ 2.98} & \multicolumn{1}{c}{\textbf{\color{blue} 2.98}} & \multicolumn{1}{c|}{\textbf{\color{blue} 2.98}} & \multicolumn{1}{c|}{ 2.98} & \multicolumn{1}{c}{\textbf{\color{blue} 2.98}} & \multicolumn{1}{c|}{\textbf{\color{blue} 2.98}} \\
 &  6 &  4.49 &  1.89 &  5.34 & \multicolumn{1}{c|}{ 4.21} &  4.59 & \multicolumn{1}{c|}{\textbf{\color{blue} 4.49}} & \multicolumn{1}{c|}{ 4.95} & \multicolumn{1}{c}{ 4.97} & \multicolumn{1}{c|}{ 4.95} & \multicolumn{1}{c|}{ 4.48} & \multicolumn{1}{c}{\textbf{\color{blue} 4.48}} & \multicolumn{1}{c|}{\textbf{\color{blue} 4.48}} \\
 &  7 &  3.45 &  1.87 &  3.74 & \multicolumn{1}{c|}{ 3.26} &  3.49 & \multicolumn{1}{c|}{ 3.47} & \multicolumn{1}{c|}{ 3.45} & \multicolumn{1}{c}{\textbf{\color{blue} 3.45}} & \multicolumn{1}{c|}{\textbf{\color{blue} 3.45}} & \multicolumn{1}{c|}{ 3.45} & \multicolumn{1}{c}{\textbf{\color{blue} 3.45}} & \multicolumn{1}{c|}{\textbf{\color{blue} 3.45}} \\
 &  8 &  3.85 &  1.89 &  5.79 & \multicolumn{1}{c|}{ 3.64} &  3.99 & \multicolumn{1}{c|}{\textbf{\color{blue} 3.85}} & \multicolumn{1}{c|}{ 3.85} & \multicolumn{1}{c}{\textbf{\color{blue} 3.85}} & \multicolumn{1}{c|}{\textbf{\color{blue} 3.85}} & \multicolumn{1}{c|}{ 3.85} & \multicolumn{1}{c}{\textbf{\color{blue} 3.85}} & \multicolumn{1}{c|}{\textbf{\color{blue} 3.85}} \\
 &  9 &  3.06 &  1.88 &  3.27 & \multicolumn{1}{c|}{ 2.90} &  3.09 & \multicolumn{1}{c|}{\textbf{\color{blue} 3.06}} & \multicolumn{1}{c|}{ 3.05} & \multicolumn{1}{c}{\textbf{\color{blue} 3.06}} & \multicolumn{1}{c|}{\textbf{\color{blue} 3.05}} & \multicolumn{1}{c|}{ 3.05} & \multicolumn{1}{c}{\textbf{\color{blue} 3.05}} & \multicolumn{1}{c|}{\textbf{\color{blue} 3.05}} \\
 & 10 &  2.97 &  1.94 &  3.00 & \multicolumn{1}{c|}{ 2.87} & \textbf{\color{blue} 2.98} & \multicolumn{1}{c|}{\textbf{\color{blue} 2.97}} & \multicolumn{1}{c|}{ 2.97} & \multicolumn{1}{c}{\textbf{\color{blue} 2.98}} & \multicolumn{1}{c|}{\textbf{\color{blue} 2.97}} & \multicolumn{1}{c|}{ 2.97} & \multicolumn{1}{c}{\textbf{\color{blue} 2.97}} & \multicolumn{1}{c|}{\textbf{\color{blue} 2.97}} \\ \hline
\end{tabular}
\caption{\footnotesize Showcases the results for the 20 tests conducted on \eqref{eq:QCQP_test_instances} with $m=5$ and $10$. 
The column titled, {\it Optimal}, estimates the opt. value of \eqref{eq:QCQP_test_instances} within $\pm 0.013$ of the exact value. 
Under {\it SDR}, the values in column, {\it Orig.}, are the opt. obj. values of \eqref{eq:QCQP_SDR} while column {\it Random} provides the best obj. value obtained via the randomization process \eqref{eq:rand_QCQP} using twenty samples. 
The header, {\it Our Model}, presents the best obj. values obtained via \eqref{eq:QCQP_SCO} and our approximation procedures for different values of $\delta$; 
{\it Orig.} states the best obj. value of \eqref{eq:QCQP_SCO} obtained from three initializations of Algorithm~\ref{alg:Staged_bcd}; {\it Random} gives the best randomized solution obtained from sampling based on our three solutions to \eqref{eq:QCQP_SCO}, and {\it Project} gives the best value obtained from our three projected solutions obtained by solving \eqref{eq:QCQP_SCO} and projecting as described in (A1.).
Values in blue denote a global opt. solution, within a tolerance.
}
\label{tab:QCQP_tab1}
\end{table}

 \begin{table}[H]
\centering
\begin{tabular}{|cc|c|cc|cccllllll|}
\hline
\multicolumn{1}{|l}{} & \multicolumn{1}{l|}{} & \multicolumn{1}{l|}{}  & \multicolumn{2}{c|}{\textbf{SDR}}            & \multicolumn{9}{c|}{\textbf{Our Model}}                                                                                                                                                                                                                                                                                                                       \\ \cline{6-14}
\multicolumn{1}{|l}{} & \multicolumn{1}{l|}{} & \multicolumn{1}{l|}{}  & \multicolumn{1}{l}{} & \multicolumn{1}{l|}{} & \multicolumn{1}{l}{} & \multicolumn{1}{l}{$\delta = 10^{-1}$} & \multicolumn{1}{l|}{}                    &                                       & $\delta = 10^{-3}$                           & \multicolumn{1}{l|}{}                    &                                       & $\delta = 10^{-6}$                           &                                          \\ \cline{1-14}
\textbf{m}            & \textbf{Test}         & \textbf{Optimal} & \textbf{Orig.}    & \textbf{Random}   & \multicolumn{1}{c|}{\textbf{Orig.}} & \textbf{Random}               & \multicolumn{1}{c|}{\textbf{Project}} & \multicolumn{1}{c|}{\textbf{Orig.}} & \multicolumn{1}{c}{\textbf{Random}} & \multicolumn{1}{c|}{\textbf{Project}} & \multicolumn{1}{c|}{\textbf{Orig.}} & \multicolumn{1}{c}{\textbf{Random}} & \multicolumn{1}{c|}{\textbf{Project}} \\ \hline
 25 &  1 &  6.05 &  1.91 &  6.23 & \multicolumn{1}{c|}{ 5.58} &  6.07 & \multicolumn{1}{c|}{\textbf{\color{blue} 6.06}} & \multicolumn{1}{c|}{ 6.04} & \multicolumn{1}{c}{\textbf{\color{blue} 6.05}} & \multicolumn{1}{c|}{\textbf{\color{blue} 6.05}} & \multicolumn{1}{c|}{ 6.05} & \multicolumn{1}{c}{\textbf{\color{blue} 6.05}} & \multicolumn{1}{c|}{\textbf{\color{blue} 6.05}} \\
 &  2 &  4.84 &  1.96 &  6.06 & \multicolumn{1}{c|}{ 4.53} &  4.92 & \multicolumn{1}{c|}{ 4.85} & \multicolumn{1}{c|}{ 4.83} & \multicolumn{1}{c}{\textbf{\color{blue} 4.84}} & \multicolumn{1}{c|}{\textbf{\color{blue} 4.83}} & \multicolumn{1}{c|}{ 4.83} & \multicolumn{1}{c}{\textbf{\color{blue} 4.83}} & \multicolumn{1}{c|}{\textbf{\color{blue} 4.83}} \\
 &  3 &  4.14 &  1.95 &  4.29 & \multicolumn{1}{c|}{ 3.89} &  4.24 & \multicolumn{1}{c|}{ 4.16} & \multicolumn{1}{c|}{ 4.14} & \multicolumn{1}{c}{\textbf{\color{blue} 4.15}} & \multicolumn{1}{c|}{\textbf{\color{blue} 4.14}} & \multicolumn{1}{c|}{ 4.14} & \multicolumn{1}{c}{\textbf{\color{blue} 4.14}} & \multicolumn{1}{c|}{\textbf{\color{blue} 4.14}} \\
 &  4 &  7.67 &  1.94 &  9.62 & \multicolumn{1}{c|}{ 7.03} &  7.84 & \multicolumn{1}{c|}{\textbf{\color{blue} 7.68}} & \multicolumn{1}{c|}{ 7.66} & \multicolumn{1}{c}{\textbf{\color{blue} 7.67}} & \multicolumn{1}{c|}{\textbf{\color{blue} 7.67}} & \multicolumn{1}{c|}{ 7.67} & \multicolumn{1}{c}{\textbf{\color{blue} 7.67}} & \multicolumn{1}{c|}{\textbf{\color{blue} 7.67}} \\
 &  5 & 10.06 &  1.96 & 13.55 & \multicolumn{1}{c|}{ 9.20} & 10.08 & \multicolumn{1}{c|}{10.07} & \multicolumn{1}{c|}{10.05} & \multicolumn{1}{c}{10.09} & \multicolumn{1}{c|}{\textbf{\color{blue}10.06}} & \multicolumn{1}{c|}{10.06} & \multicolumn{1}{c}{\textbf{\color{blue}10.06}} & \multicolumn{1}{c|}{\textbf{\color{blue}10.06}} \\
 &  6 &  3.53 &  1.91 &  4.32 & \multicolumn{1}{c|}{ 3.35} & \textbf{\color{blue} 3.53} & \multicolumn{1}{c|}{\textbf{\color{blue} 3.53}} & \multicolumn{1}{c|}{ 3.53} & \multicolumn{1}{c}{\textbf{\color{blue} 3.53}} & \multicolumn{1}{c|}{\textbf{\color{blue} 3.53}} & \multicolumn{1}{c|}{ 3.53} & \multicolumn{1}{c}{\textbf{\color{blue} 3.53}} & \multicolumn{1}{c|}{\textbf{\color{blue} 3.53}} \\
 &  7 &  8.07 &  1.96 &  9.13 & \multicolumn{1}{c|}{ 7.43} &  8.21 & \multicolumn{1}{c|}{ 8.08} & \multicolumn{1}{c|}{ 8.06} & \multicolumn{1}{c}{\textbf{\color{blue} 8.08}} & \multicolumn{1}{c|}{\textbf{\color{blue} 8.07}} & \multicolumn{1}{c|}{ 8.07} & \multicolumn{1}{c}{\textbf{\color{blue} 8.07}} & \multicolumn{1}{c|}{\textbf{\color{blue} 8.07}} \\
 &  8 &  4.70 &  1.95 &  7.12 & \multicolumn{1}{c|}{ 4.39} &  4.72 & \multicolumn{1}{c|}{ 4.71} & \multicolumn{1}{c|}{ 4.69} & \multicolumn{1}{c}{\textbf{\color{blue} 4.70}} & \multicolumn{1}{c|}{\textbf{\color{blue} 4.70}} & \multicolumn{1}{c|}{ 4.70} & \multicolumn{1}{c}{\textbf{\color{blue} 4.70}} & \multicolumn{1}{c|}{\textbf{\color{blue} 4.70}} \\
 &  9 &  7.69 &  1.97 &  7.76 & \multicolumn{1}{c|}{ 7.09} &  7.83 & \multicolumn{1}{c|}{\textbf{\color{blue} 7.70}} & \multicolumn{1}{c|}{ 7.68} & \multicolumn{1}{c}{\textbf{\color{blue} 7.69}} & \multicolumn{1}{c|}{\textbf{\color{blue} 7.69}} & \multicolumn{1}{c|}{ 7.69} & \multicolumn{1}{c}{\textbf{\color{blue} 7.69}} & \multicolumn{1}{c|}{\textbf{\color{blue} 7.69}} \\
 & 10 &  4.91 &  1.93 &  6.83 & \multicolumn{1}{c|}{ 4.59} &  4.95 & \multicolumn{1}{c|}{\textbf{\color{blue} 4.91}} & \multicolumn{1}{c|}{ 4.90} & \multicolumn{1}{c}{\textbf{\color{blue} 4.92}} & \multicolumn{1}{c|}{\textbf{\color{blue} 4.91}} & \multicolumn{1}{c|}{ 4.91} & \multicolumn{1}{c}{\textbf{\color{blue} 4.91}} & \multicolumn{1}{c|}{\textbf{\color{blue} 4.91}} \\ \hline
 50 &  1 &  5.31 &  1.95 &  5.67 & \multicolumn{1}{c|}{ 4.94} &  5.34 & \multicolumn{1}{c|}{ 5.33} & \multicolumn{1}{c|}{ 5.31} & \multicolumn{1}{c}{\textbf{\color{blue} 5.31}} & \multicolumn{1}{c|}{\textbf{\color{blue} 5.31}} & \multicolumn{1}{c|}{ 5.31} & \multicolumn{1}{c}{\textbf{\color{blue} 5.31}} & \multicolumn{1}{c|}{\textbf{\color{blue} 5.31}} \\
 &  2 &  9.35 &  1.97 & 10.14 & \multicolumn{1}{c|}{ 8.54} &  9.40 & \multicolumn{1}{c|}{\textbf{\color{blue} 9.35}} & \multicolumn{1}{c|}{ 9.34} & \multicolumn{1}{c}{\textbf{\color{blue} 9.35}} & \multicolumn{1}{c|}{\textbf{\color{blue} 9.35}} & \multicolumn{1}{c|}{ 9.35} & \multicolumn{1}{c}{\textbf{\color{blue} 9.35}} & \multicolumn{1}{c|}{\textbf{\color{blue} 9.35}} \\
 &  3 &  7.25 &  1.95 &  7.34 & \multicolumn{1}{c|}{ 6.66} &  7.39 & \multicolumn{1}{c|}{ 7.27} & \multicolumn{1}{c|}{ 7.24} & \multicolumn{1}{c}{\textbf{\color{blue} 7.25}} & \multicolumn{1}{c|}{\textbf{\color{blue} 7.25}} & \multicolumn{1}{c|}{ 7.25} & \multicolumn{1}{c}{\textbf{\color{blue} 7.25}} & \multicolumn{1}{c|}{\textbf{\color{blue} 7.25}} \\
 &  4 &  9.49 &  1.96 & 11.01 & \multicolumn{1}{c|}{ 8.66} &  9.54 & \multicolumn{1}{c|}{\textbf{\color{blue} 9.49}} & \multicolumn{1}{c|}{ 9.47} & \multicolumn{1}{c}{ 9.50} & \multicolumn{1}{c|}{\textbf{\color{blue} 9.48}} & \multicolumn{1}{c|}{ 9.48} & \multicolumn{1}{c}{\textbf{\color{blue} 9.48}} & \multicolumn{1}{c|}{\textbf{\color{blue} 9.48}} \\
 &  5 &  9.13 &  1.95 & 10.44 & \multicolumn{1}{c|}{ 8.33} &  9.20 & \multicolumn{1}{c|}{\textbf{\color{blue} 9.13}} & \multicolumn{1}{c|}{ 9.12} & \multicolumn{1}{c}{\textbf{\color{blue} 9.13}} & \multicolumn{1}{c|}{\textbf{\color{blue} 9.12}} & \multicolumn{1}{c|}{ 9.12} & \multicolumn{1}{c}{\textbf{\color{blue} 9.12}} & \multicolumn{1}{c|}{\textbf{\color{blue} 9.12}} \\
 &  6 &  7.69 &  1.96 &  8.07 & \multicolumn{1}{c|}{ 7.03} &  7.70 & \multicolumn{1}{c|}{\textbf{\color{blue} 7.69}} & \multicolumn{1}{c|}{ 7.68} & \multicolumn{1}{c}{\textbf{\color{blue} 7.69}} & \multicolumn{1}{c|}{\textbf{\color{blue} 7.69}} & \multicolumn{1}{c|}{ 7.69} & \multicolumn{1}{c}{\textbf{\color{blue} 7.69}} & \multicolumn{1}{c|}{\textbf{\color{blue} 7.69}} \\
 &  7 & 14.93 &  1.97 & 17.22 & \multicolumn{1}{c|}{13.58} & 16.66 & \multicolumn{1}{c|}{\textbf{\color{blue}14.94}} & \multicolumn{1}{c|}{17.17} & \multicolumn{1}{c}{17.19} & \multicolumn{1}{c|}{17.18} & \multicolumn{1}{c|}{14.92} & \multicolumn{1}{c}{\textbf{\color{blue}14.92}} & \multicolumn{1}{c|}{\textbf{\color{blue}14.92}} \\
 &  8 & 11.26 &  1.95 & 13.43 & \multicolumn{1}{c|}{11.36} & 12.57 & \multicolumn{1}{c|}{12.50} & \multicolumn{1}{c|}{12.48} & \multicolumn{1}{c}{12.50} & \multicolumn{1}{c|}{12.49} & \multicolumn{1}{c|}{11.26} & \multicolumn{1}{c}{\textbf{\color{blue}11.26}} & \multicolumn{1}{c|}{\textbf{\color{blue}11.26}} \\
 &  9 &  5.82 &  1.96 &  6.39 & \multicolumn{1}{c|}{ 5.40} &  5.91 & \multicolumn{1}{c|}{ 5.84} & \multicolumn{1}{c|}{ 5.82} & \multicolumn{1}{c}{\textbf{\color{blue} 5.83}} & \multicolumn{1}{c|}{\textbf{\color{blue} 5.82}} & \multicolumn{1}{c|}{ 5.82} & \multicolumn{1}{c}{\textbf{\color{blue} 5.82}} & \multicolumn{1}{c|}{\textbf{\color{blue} 5.82}} \\
 & 10 & 11.79 &  1.97 & 12.47 & \multicolumn{1}{c|}{10.90} & 12.10 & \multicolumn{1}{c|}{11.98} & \multicolumn{1}{c|}{11.95} & \multicolumn{1}{c}{11.97} & \multicolumn{1}{c|}{11.96} & \multicolumn{1}{c|}{11.96} & \multicolumn{1}{c}{11.96} & \multicolumn{1}{c|}{11.96} \\ \hline
\end{tabular}
\caption{Showcases the results for the 20 tests conducted on \eqref{eq:QCQP_test_instances} with $m=25$ and $50$.
The description of the entries in this table are identical to Table~\ref{tab:QCQP_tab1}.
}
\label{tab:QCQP_tab2}
\end{table}
\end{landscape}

\begin{figure}[H]
    \hspace{-1.30in}
    \includegraphics[width=1.35\textwidth]{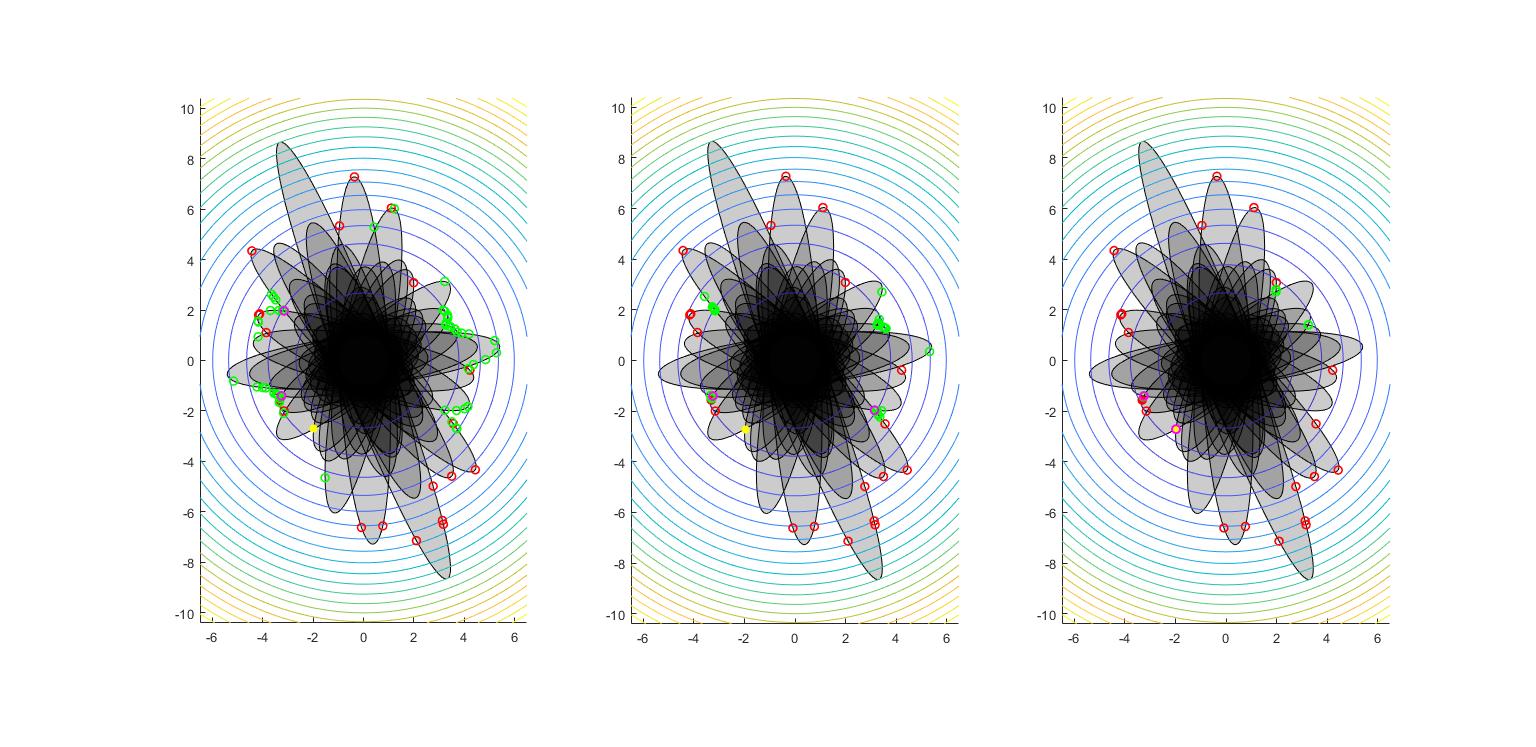}
    \caption{Displays the feasible points computed by each method for Test 8 with $m=50$. The yellow star is an estimated global optimal solution, the red circles are the randomized solutions generated from the SDR approach, the green circles are the candidate points generated from our method via (A2.), and magenta circles are the projected solutions obtained via (A1.). 
    From left to right, the three panels display the results for \eqref{eq:QCQP_SCO} with $\delta=10^{-1}$, $\delta=10^{-3}$, and  $\delta=10^{-6}$.}
    \label{fig:QCQP_sampled_solution}
\end{figure}

The tables showcase our method was able to drastically outperform the standard SDR approach for solving \eqref{eq:QCQP_test_instances}. 
Each of the blue values in Tables~\ref{tab:QCQP_tab1} and \ref{tab:QCQP_tab2} denote an objective value obtained which was within $0.09 - 0.0126$ of the global optimal value estimated via grid search. 
In the 40 numerical tests, the SDR approach was only able to compute a near optimal solution 3 times while our approach with $\delta=10^{-6}$, with both approaches (A1.) and (A2.), was able to solve 38 for the 40 tests. 
Similarly, with $\delta = 10^{-1}$ and $10^{-3}$, the projection approach was able to compute near optimal solutions in 24 and 35 of the experiments respectively. 
Furthermore, our projection approach, (A1.), was able to successfully compute near optimal solutions without the need for randomization.
Figure~\ref{fig:QCQP_sampled_solution} displays an example of the feasible points generated by the different methods. 

A few observations can be made from Figure~\ref{fig:QCQP_sampled_solution}. 
First, the randomized solutions generated from the semidefinite programming approach spread themselves out across the feasible region greatly while the general spread of the randomized solutions via our approach is highly dependent upon the value of $\delta$.
For the green circles in the far left panel of Figure~\ref{fig:QCQP_sampled_solution}, we see the spread is less than that of the red circles, and we note the sampled distribution of points collapses around about four local minimums when $\delta$ decreases to $10^{-3}$ in the middel panel.
So, while the randomized points computed via \eqref{eq:QCQP_SCO} seem to cluster around local minimums, the randomized points computed from SDR do not seem to exhibit any such behavior. 
Second, the projected solutions obtain via (A1.) seem to locate local minimums in this example without the need for randomization. Note, they are the magenta circles in Figure~\ref{fig:QCQP_sampled_solution}. 
This numerical experiment was an interesting example because it was the only instance where our method with the two larger $\delta$ values was unable to compute a global solution while the smallest value was able to do so. The far right panel in Figure~\ref{fig:QCQP_sampled_solution} displays the results for $\delta=10^{-6}$ on this problem. 
From the figure we can see that one of the projected solutions located the global optimal value, and furthermore all of the randomized points generated by out method collapsed very tightly about four different local minimum, including the global minimums.

Overall, the numerical results demonstrate our method presents a novel way of approaching a difficult NP-Hard problem and can best the classical and effective SDR approach with randomization.
The general behavior of our approach is worth further investigation as well from the observations stated here.
Also, it should noted the procedure presented in this section extends over to any problem, not just QCQPs, where rank constraints are removed in-order to obtain easier and sometimes convex problems. 
Hence, our model could greatly impact the way numerous problems are solved.  

\section{Conclusion}\label{sec:conclusion}
We have set out in this paper to present the first general matrix optimization framework which enables the modeling of both coordinate and spectral constraints. Further, we developed and theoretically guaranteed the performance of an algorithm
which computes stationary points for our model. 
Under standard assumptions, we proved our algorithm converges to an $(\epsilon,\epsilon)$-approximate first-order KKT point in $\mathcal{O}(1/\epsilon^2)$ iterations, and we demonstrated our approach could compute global minimums to challenging nonconvex problems.
In our numerical experiments, we demonstrated a novel nonconvex relaxation of a quadratically constrained quadratic program was solvable by our method to global optimality and significantly outperformed the state-of-the-art semidefinite relaxation approach with randomization. 
We also presented a generalized form of semidefinite programming which extends a current essential paradigm. 

The results of this paper are exciting. 
We have extended matrix optimization into a new arena and demonstrated novel nonconvex relaxations are possible which can outperform current frameworks.
However, there is still much work to be done on this research program. 
For starters, though our algorithm performed well in our numerical experiments, the method is not computationally efficient enough to be applied to large-scale problems. 
The development of more efficient methods for solving \eqref{eqn:gen_spec_coord} is of first importance moving forward, and we believe there are several ways to utilize developments in nonconvex optimization to these ends. 
Also, investigating theoretical guarantees for approximation methods resulting from our nonconvex relaxations of rank-one constraints would be an intriguing avenue for future work. 



\appendix
\section{Gradient Lipschitz Properties of the Reformulation}\label{sec:appendix_lip}

\begin{theorem}\label{thm:lip_thm}
Assume $F:\Symn \rightarrow \R$ is gradient Lipschitz with parameter $L_F >0$, i.e., 
\[
\| \nabla F(\bX_1) - \bnabla F(\bX_2) \|_F \leq L_F \| \bX_1 - \bX_2\|_F, \; \forall \bX_1, \bX_2 \in \Symn, 
\]
and define $f:\mathcal{O}(n,n) \times \R^n \rightarrow \R$ using the spectral decomposition of $\bX \in \Symn$ such that,
\[
f(\bQ, \blambda) := F(\bQ \text{\rm Diag}(\blambda) \bQ^\top).
\]
If we restrict the domain of $f$ to $\mathcal{O}(n,n) \times \mathcal{C}$ where $\cC$ is a bounded subset of $\R^n$ such that $\|\blambda\| \leq D$ for all $\blambda \in \cC$, then $f$ satisfies the Lipschitz conditions in Assumption 2. 
\end{theorem}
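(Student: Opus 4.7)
My plan is to verify items~1--3 of Assumption~\ref{a:smooth2} for the decomposed objective $f(\bQ,\blambda) := F(\bQ\Diag(\blambda)\bQ^\top)$ by propagating the gradient Lipschitz hypothesis of $F$ through the smooth composition, exploiting compactness of $\mathcal{O}(n,n)$ and boundedness of $\cC$. The first step is the chain rule: setting $\bX = \bQ\Diag(\blambda)\bQ^\top$ and using that $\nabla F(\bX) \in \Symn$ (since $F$ is defined on symmetric matrices),
\[
\nabla_\bQ f(\bQ,\blambda) = 2\,\nabla F(\bX)\,\bQ\Diag(\blambda), \qquad \nabla_\blambda f(\bQ,\blambda) = \diag\!\bigl(\bQ^\top \nabla F(\bX)\,\bQ\bigr).
\]
I would then record the uniform estimates $\|\bQ\|_2 = 1$, $\|\Diag(\blambda)\|_2 \leq D$, $\|\bX\|_F \leq D$, and consequently $\|\nabla F(\bX)\|_F \leq M := \|\nabla F(\bm{0})\|_F + L_F D$, which hold throughout $\mathcal{O}(n,n) \times \cC$ and will drive every subsequent bound.

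For item~1 I would fix $\bQ$ and observe $\|\bX^1 - \bX^2\|_F = \|\blambda^1 - \blambda^2\|$, so
\[
\|\nabla_\blambda f(\bQ,\blambda^1) - \nabla_\blambda f(\bQ,\blambda^2)\| \leq \|\bQ^\top(\nabla F(\bX^1) - \nabla F(\bX^2))\bQ\|_F \leq L_F\,\|\blambda^1 - \blambda^2\|,
\]
yielding $L_y = L_F$. For item~3 I would telescope the difference $\nabla_\blambda f(\bQ_1,\blambda) - \nabla_\blambda f(\bQ_2,\blambda)$ into three terms that isolate each occurrence of $\bQ_1 - \bQ_2$ and the inner difference $\nabla F(\bX_1) - \nabla F(\bX_2)$. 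Bounding each piece via the $M$-bound on $\nabla F$ together with the splitting estimate $\|\bX_1 - \bX_2\|_F \leq 2D\,\|\bQ_1 - \bQ_2\|_F$ (obtained by writing $\bX_1 - \bX_2 = (\bQ_1 - \bQ_2)\Diag(\blambda)\bQ_1^\top + \bQ_2\Diag(\blambda)(\bQ_1 - \bQ_2)^\top$) collects into a single mixed Lipschitz constant $L_{xy}$ depending only on $L_F$, $D$, and $M$.

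For item~2 I would first establish the Euclidean gradient-Lipschitz bound
\[
\|\nabla_\bQ f(\bQ_1,\blambda) - \nabla_\bQ f(\bQ_2,\blambda)\|_F \leq \tilde L_x\,\|\bQ_1 - \bQ_2\|_F
\]
by a parallel telescoping applied to $2\nabla F(\bX)\bQ\Diag(\blambda)$, obtaining $\tilde L_x$ depending only on $L_F$, $D$, and $M$. Since $\mathcal{O}(n,n)$ is a compact Riemannian submanifold of $\R^{n\times n}$, Lemma~2.7 of \cite{boumal2019global} then upgrades this ambient-space bound into the gradient-Lipschitz retraction inequality \eqref{eq:grad_lip_retraction} for both the exponential map and any second-order retraction such as polar or QR, with constants $L_x^{\mathrm{exp}}, L_x^{\Retr}$ inheriting uniformity in $\blambda \in \cC$ from the step-one estimates; setting $L_x := \max(1, L_x^{\mathrm{exp}}, L_x^{\Retr})$ finishes the verification. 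The main obstacle is simply the algebraic bookkeeping in the telescoping arguments and checking that Lemma~2.7 produces constants uniform in $\blambda$; no conceptual difficulty arises, because compactness of $\mathcal{O}(n,n)$ and boundedness of $\cC$ turn each required constant into a finite quantity.
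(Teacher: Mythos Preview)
Your proposal is correct and follows essentially the same route as the paper: compute the partial gradients by the chain rule, bound $\nabla_\blambda f$ in $\blambda$ directly from the $L_F$-Lipschitz hypothesis, bound $\nabla_\blambda f$ in $\bQ$ by a telescoping argument together with $\|\bX_1-\bX_2\|_F \le 2D\|\bQ_1-\bQ_2\|_F$, and obtain the gradient-Lipschitz retraction for the $\bQ$-block by showing the Euclidean gradient is Lipschitz on the convex hull of $\mathcal{O}(n,n)$ and invoking Lemma~2.7 of \cite{boumal2019global}. Your item~1 bound $L_y=L_F$ is in fact sharper than the paper's $\sqrt{n}\,L_F$, since you use $\|\diag(\bQ^\top A\bQ)\|\le\|A\|_F$ rather than a coordinatewise Cauchy--Schwarz; just be explicit that the telescoping for item~2 is carried out for $\bQ_1,\bQ_2$ in the convex hull of $\mathcal{O}(n,n)$ (where $\|\bQ\|_2\le 1$ still holds), as that is the hypothesis Lemma~2.7 requires.
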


\begin{proof}   
Let $\bQ=[ \bq_1 \; | \; \hdots \; | \bq_n ]\in \mathcal{O}(n,n)$ be fixed. Define the function $H:\R^n \rightarrow \R$ such that,
$
H(\blambda; \bQ) := F(\bQ \text{Diag}(\blambda)\bQ^\top). 
$
Then, 
\[
H(\blambda;\bQ) = F(\bQ \text{Diag}(\blambda) \bQ^\top) = F\left(\sum_{i=1}^{n} \lambda_i \bq_i \bq_i^\top\right), 
\]
and computing the partial derivatives of $H$ we have,
\begin{align}
\frac{\partial H(\blambda;\bQ)}{\partial \lambda_i}  &= \frac{d}{dt}\left( F\left(\lambda_1 \bq_1 \bq_1^\top + \cdots +  \lambda_{i-1} \bq_{i-1} \bq_{i-1}^\top + (\lambda_i+t) \bq_i \bq_i^\top + \cdots + \lambda_n \bq_n \bq_n^\top \right) \right)\bigg|_{t=0} \nonumber \\
&= \frac{d}{dt}\left( F\left( \bQ \text{Diag}(\blambda) \bQ^\top +   t\bq_i \bq_i^\top \right) \right)\bigg|_{t=0} \nonumber \\  
&= \langle \nabla F(\bQ \text{Diag}(\blambda) \bQ^\top), \bq_i \bq_i^\top \rangle \nonumber.  
\end{align}
Then, 
\begin{align}
\| \nabla_{\blambda} H(\blambda_1;\bQ) - \nabla_{\blambda} H(\blambda_2;\bQ) \|^2 &= \sum_{i=1}^{n} \left[  \langle \nabla F(\bQ \text{Diag}(\blambda_1) \bQ^\top) -  \nabla F(\bQ \text{Diag}(\blambda_2) \bQ^\top), \bq_i \bq_i^\top \rangle  \right]^2 \nonumber  \\ 
&\leq  \sum_{i=1}^{n} \| \nabla F(\bQ \text{Diag}(\blambda_1) \bQ^\top) -  \nabla F(\bQ \text{Diag}(\blambda_2) \bQ^\top) \|_F^2 \| \bq_i \bq_i^\top \|_F^2 \nonumber \\
&\leq  \sum_{i=1}^{n} L_F^2 \|\bQ \text{Diag}(\blambda_1) \bQ^\top - \bQ \text{Diag}(\blambda_2) \bQ^\top \|_F^2 \nonumber \\
&\leq  \sum_{i=1}^{n} L_F^2 \|\bQ(\text{Diag}(\blambda_1) - \text{Diag}(\blambda_2) )\bQ^\top\|_F^2 \nonumber \\
&= n L_F^2 \| \blambda_1  - \blambda_2\|^2, \nonumber 
\end{align}
where the second inequality follows from the gradient Lipschitz condition on $F$. Thus, 
\[
\| \nabla_{\blambda} H(\blambda_1;\bQ) - \nabla_{\blambda} H(\blambda_2;\bQ) \| \leq \sqrt{n} L_F \| \blambda_1 - \blambda_2 \|, \;\; \forall  \blambda_1, \blambda_2 \in \R^n. 
\]

For fixed $\blambda \in \R^n$, define the function $G:\R^{n\times n} \rightarrow \R$ such that, 
\[
G(\bm{U};\blambda) =  F(\bm{U} \text{Diag}(\blambda)\bm{U}^\top).
\]
We next compute the gradient of $G$. Let $\bX=\bm{U}\text{Diag}(\blambda) \bm{U}^\top$ and $\bm{E} \in \R^{n\times n}$. Then, 
\begin{align}
\frac{d}{dt}\left(G(\bm{U}+t \bm{E})\right)\bigg|_{t=0} &=  \frac{d}{dt}\left( F( \left[\bm{U}+t\bm{E}\right]\text{Diag}(\blambda) \left[\bm{U}+t\bm{E}\right]^\top)  \right)\bigg|_{t=0} \nonumber \\
&= \frac{d}{dt}\left( F ( \bX + t( \bm{E}\text{Diag}(\blambda) \bm{U}^\top + \bm{U} \text{Diag}(\blambda) \bm{E}^\top) + \mathcal{O}(t^2) \right)\bigg|_{t=0} \nonumber \\
&= \langle \nabla F(\bX),\; \bm{E}\text{Diag}(\blambda) \bm{U}^\top + \bm{U} \text{Diag}(\blambda) \bm{E}^\top \rangle \nonumber \\
&= \langle \nabla F(\bX) \bm{U} \text{Diag}(\blambda) + \nabla F(\bX)^\top \bm{U} \text{Diag}(\blambda), \; \bm{E} \rangle \nonumber \\
&= \langle \nabla G(\bm{U}), \bm{E} \rangle. \nonumber 
\end{align}
Thus, 
\[
\nabla G(\bm{U}) = \nabla F(\bX) \bm{U} \text{Diag}(\blambda) + \nabla F(\bX)^\top \bm{U} \text{Diag}(\blambda). 
\]
We leverage Lemma 2.7 of \cite{boumal2019global} to prove $G$ has a gradient Lipschitz retraction over the manifold of orthogonal matrices, i.e., we shall show $G$ is gradient Lipschitz over the convex hull of $\mathcal{O}(n,n)$.
Let $\bm{U}, \bm{V} \in \R^{n\times n}$ be contained in the convex hull of $\mathcal{O}(n,n)$, $\blambda \in \R^n$ be fixed, $\bX = \bm{U}\text{Diag}(\blambda) \bm{U}^\top$, and $\bY = \bm{V} \text{Diag}(\blambda) \bm{V}^\top$. Then, 
\begin{align}\label{eqn:G_diff}
\|\nabla G(\bm{U}) - \nabla G(\bm{V})\|_F &= \| \overbrace{\left( \nabla F(\bX) + \nabla F(\bX)^\top \right)}^{\bm{\Sigma_x}:=} \bm{U} \text{Diag}(\blambda) -  \overbrace{\left( \nabla F(\bY) + \nabla F(\bY)^\top \right)}^{\bm{\Sigma}_Y:=} \bm{V} \text{Diag}(\blambda)\|_F \nonumber  \\
&\leq\|  \bm{\Sigma}_x \bm{U} - \bm{\Sigma}_x \bm{V} + \bm{\Sigma}_x \bm{V} - \bm{\Sigma}_y \bm{V} \|_F \cdot \| \blambda\| \nonumber \\
&\leq D \| \bm{\Sigma}_x \left(\bm{U} - \bm{V} \right) + \left(\bm{\Sigma}_x - \bm{\Sigma}_y\right) \bm{V} \|_F \nonumber \\
&\leq D \|\bm{\Sigma}_x\|_F \| \bm{U} - \bm{V}\|_F + D \| \bV\|_F \|\bSigma_x - \bSigma_y\|_F. 
\end{align}
Since we are assuming $F$ is gradient Lipschitz over the set of symmetric matrices, $\|\blambda\|$ is bounded, and $\bU$ and $\bV$ are contained in the convex hull of $\mathcal{O}(n,n)$, which is a bounded subset of $\R^{n\times n}$, it follows there exists $M_F > 0$ such that, 
\begin{equation}\label{eqn:Fgrad_bound}
\| \nabla F(\bX) \|_F \leq M_F, \; \forall \bX = \bU \Diag(\blambda) \bU^\top, 
\end{equation}
with $\bU$ and $\blambda$ as defined above. 
Additionally, be virtue of $\bU$ being in the convex hull of $\cO(n,n)$ it follows $\bU$ is the convex combination of orthogonal matrices $\bA_1, \hdots, \bA_p$.  From this a simple bound on the norm of $\bU$ is achieved. Assume $\bU = \sum_{i}^{p} \alpha_i \bA_i$ with $\alpha_i > 0$ and $\sum_{i=1}^{p} \alpha_i =1$. Then,  
\begin{equation}\label{eqn:U_bound}
\|\bU\|_F = \| \sum_{i}^{p} \alpha_i \bA_i \|_F \leq \sum_{i}^{p} \alpha_i \|\bA_i \|_F \leq \sum_{i=1}^{p} \alpha_i \sqrt{n} = \sqrt{n}.  
\end{equation}
Next, we bound the last term in \eqref{eqn:G_diff}. So, 
\begin{align}\label{eqn:F_diff_bound}
\| \bSigma_x - \bSigma_y \|_F &\leq 2 \| \nabla F(\bX) - \nabla F(\bY)\|_F \nonumber\\
&\leq 2L_F \| \bU \Diag(\blambda) \bU^\top - \bV \Diag(\blambda) \bV^\top \|_F \nonumber \\
&= 2 L_F \| \bU \Diag(\blambda) \bU^\top - \bU \Diag(\blambda) \bV^\top + \bU \Diag(\blambda) \bV^\top - \bV \Diag(\blambda) \bV^\top\|_F \nonumber \\
&=2 L_F \| \bU \Diag(\blambda) \left( \bU - \bV\right)^\top + \left(\bU-\bV\right) \Diag(\blambda) \bV^\top \|_F \nonumber\\
&\leq 2 L_F \left( \|\bU \Diag(\blambda)\|_F + \|\Diag(\blambda)\bV^\top\|_F\right) \| \bU - \bV \|_F \nonumber \\
&\leq 4L_F D \sqrt{n} \|\bU - \bV\|_F.
\end{align}
Combining \eqref{eqn:Fgrad_bound}, \eqref{eqn:U_bound} and  \eqref{eqn:F_diff_bound} with \eqref{eqn:G_diff} we obtain, 
\begin{align}
\| \nabla G(\bU) - \nabla G(\bV)\|_F &\leq 2DM_F \|\bU-\bV\|_F + 4L_FD^2n \|\bU-\bV\|_F \nonumber \\
&=  2D\left(M_F + 2L_FDn \right)\|\bU - \bV\|_F, \nonumber
\end{align}
and since this holds for any $\bU$ and $\bV$ in the convex hull of $\mathcal{O}(n,n)$ it follows $G$ is gradient Lipschitz over the convex hull of the orthogonal matrices. Therefore, by Lemma 2.7 in \cite{boumal2019global}, $G$ restricted to $\mathcal{O}(n,n)$ has a gradient Lipschitz retraction and this holds for all $\blambda \in \cC$.  

Lastly, let $\bU, \bV \in \cO(n,n)$ with $\bX$ and $\bY$ defined as before, $\bm{u}_i$ and $\bm{v}_i$ be the $i$-th columns of $\bU$ and $\bV$ respectively, and $\blambda \in \R^n$. Then, 
\begin{align}\label{eqn:bound_on_H}
\| \nabla_{\lambda} H(\blambda; \bU) - \nabla_{\lambda} H(\blambda; \bV) \|^2 &= \sum_{i=1}^{n} \left( \langle \nabla F(\bX), \bm{u}_i \bm{u}_i^\top \rangle - \langle \nabla F(\bY), \bm{v}_i \bm{v}_i^\top \rangle    \right)^2  \nonumber \\
&= \sum_{i=1}^{n} \left( \langle \nabla F(\bX) - \nabla F(\bY), \bm{u}_i \bm{u}_i^\top \rangle - \langle \nabla F(\bY), \bm{v}_i \bm{v}_i^\top - \bm{u}_i \bm{u}_i^\top \rangle    \right)^2  \nonumber \\
&\leq \sum_{i=1}^{n} \left( \| \nabla F(\bX) - \nabla F(\bY)\| \cdot \|\bm{u}_i \bm{u}_i^\top\| + \|\nabla F(\bY)\|\cdot \|\bm{v}_i \bm{v}_i^\top - \bm{u}_i \bm{u}_i^\top \| \right)^2  \nonumber \\
&\leq \sum_{i=1}^{n} \left(2L_F D \sqrt{n} \|\bU - \bV\|_F + M_F\|\bm{v}_i \bm{v}_i^\top - \bm{u}_i \bm{u}_i^\top \| \right)^2,   
\end{align}
where we used the bounds \eqref{eqn:Fgrad_bound} and \eqref{eqn:F_diff_bound}. Focusing on the last term in \eqref{eqn:bound_on_H}, 
\begin{align}\label{eqn:uuvv_bound}
\|\bm{v}_i \bm{v}_i^\top - \bm{u}_i \bm{u}_i^\top \|_F &= \| \bV \be_i \be_i^\top \bV^\top - \bU \be_i \be_i^\top \bU^\top\|_F \nonumber \\
&= \| \bV \be_i \be_i^\top \bV^\top -\bV \be_i \be_i^\top \bU^\top + \bV \be_i \be_i^\top \bU^\top  - \bU \be_i \be_i^\top \bU^\top \|_F \nonumber \\
&= \| \bV \be_i \be_i^\top \left(\bV - \bU \right)^\top + \left(\bV-\bU\right) \be_i \be_i^\top \bU^\top \|_F \nonumber \\
&\leq \left(\|\be_i \be_i^\top\|_F \cdot (\|\bV\|_F + \|\bU\|_F) \right) \|\bU-\bV\|_F\nonumber \\ 
&= 2\sqrt{n} \|\bU-\bV\|_F. 
\end{align}
Combining \eqref{eqn:bound_on_H} and \eqref{eqn:uuvv_bound},
\begin{align}
\| \nabla_{\lambda} H(\blambda; \bU) - \nabla_{\lambda} H(\blambda; \bV) \|^2 &\leq \sum_{i=1}^{n}\left(2L_F D \sqrt{n} \|\bU - \bV\|_F + 2\sqrt{n}M_F\|\bU - \bV \|_F \right)^2, \nonumber \\
&= \sum_{i=1}^{n}\left(2\sqrt{n} (L_F D + M_F )\|\bU - \bV \|_F \right)^2, \nonumber \\
&= 4n^2 \left( L_F D + M_F \right)^2 \|\bU - \bV\|_F^2. \nonumber
\end{align}
\end{proof}

\end{document}